 \definecolor{darkgreen}{HTML}{336633}
 \definecolor{darkred}{HTML}{993333}
\newcommand{\arxiv}[1]{\href{http://arxiv.org/abs/#1}{\tt
    arXiv:\nolinkurl{#1}}}
\theoremstyle{plain}
\newtheorem{thm}{Theorem}[section]
\newtheorem{lem}[thm]{Lemma}
\newtheorem{prop}[thm]{Proposition}
\newtheorem{cor}[thm]{Corollary}
\theoremstyle{definition}
\newtheorem{df}[thm]{Definition}
\theoremstyle{remark}
\newtheorem{rem}[thm]{Remark}
\newtheorem{ex}[thm]{Example}
\def\bbA{\mathbb{A}}
\def\bbC{K}
\def\bbJ{\mathbb{J}}
\def\bbN{\mathbb{N}}
\def\bbZ{\mathbb{Z}}
\def\scrD{\mathscr{D}}
\def\scrK{\mathscr{K}}
\def\scrN{\mathscr{N}}
\def\scrR{\mathscr{R}}
\def\scrT{\mathscr{T}}
\def\scrV{\mathscr{V}}
\def\scrW{\mathscr{W}}
\def\scrZ{\mathscr{Z}}
\def\calA{\mathcal{A}}
\def\calC{\mathcal{C}}
\def\calD{U}
\def\calU{\mathcal{U}}
\def\ML{\mathcal{P}}
\def\calN{V}
\def\calV{\mathcal{V}}
\def\bfm{\mathbf{m}}
\def\bfr{\mathbf{r}}
\def\bfv{\mathbf{v}}
\def\TPL{\mathcal{PG}}
\def\rank{\mathrm{rank}}
\def\L{{\rm L}}
\def\la{\lambda}
\def\ov{\overline}
\def\diag{\mathrm{diag}}
\renewcommand{\hom}{\operatorname{hom}\nolimits}
\newcommand{\Hom}{\operatorname{Hom}\nolimits}
\renewcommand{\mod}{\operatorname{mod}\nolimits}
\newcommand{\Ext}{\operatorname{Ext}\nolimits}
\newcommand{\ext}{\operatorname{ext}\nolimits}
\newcommand{\GL}{\operatorname{GL}\nolimits}
\newcommand{\QLB}{\operatorname{LB}^q\nolimits}
\title[Varieties of modules over the quantum plane]{Varieties of modules over the quantum plane}
\author[Xinhong Chen]{Xinhong Chen}
\address{Department of Mathematics, Southwest Jiaotong University, Chengdu 610031, P.R.China}
\email{chenxinhong@swjtu.edu.cn}
\author[Ming Lu]{Ming Lu}
\address{Department of Mathematics, Sichuan University, Chengdu 610064, P.R.China}
\email{luming@scu.edu.cn}
\keywords{Varieties of modules, irreducible components, quantum plane}
\subjclass[2010]{Primary 16G10}
\begin{document}
\begin{abstract}
The quantum plane is the non-commutative polynomial algebra in variables $x$ and $y$ with $xy=qyx$.
In this paper, we study the module variety of $n$-dimensional modules over the quantum plane, and provide an explicit description of its irreducible components and their dimensions. We also describe the irreducible components and their dimensions of the GIT quotient of the module variety with respect to the conjugation action of $\GL_n$.
\end{abstract}

\numberwithin{equation}{section}

\maketitle

 \setcounter{tocdepth}{1}
 \tableofcontents

\section{Introduction}

\subsection{Background}
Let $K$ be an algebraically closed field of characteristic zero.
When considering representation theory of a (finitely generated) associative $K$-algebra $A$, one would like to classify its finite-dimensional modules (up to isomorphisms). However, by strong consensus, it is an unattainable goal in general.
A more promising alternative is to classify the finite-dimensional modules generically, that is, to study the {\em module variety} $\mod^n(A)$ of $n$-dimensional modules \cite{CB-S}.

It is a basic problem to study the irreducible components of $\mod^n(A)$ and their dimensions. Moreover, the general linear group $\GL_n$ acts on $\mod^n(A)$ by conjugation, and its orbits correspond to the isomorphism classes of $n$-dimensional modules. It leads us to study the GIT quotient, especially its irreducible components and their dimensions. 
The varieties of modules over algebras are studied and applied widely, see \cite{Ri79,Lu91,CB-S, Pr03, Boz16,GH18} and the references therein.


\subsection{Goal}
In this paper, we consider the module varieties over quantum plane, see e.g. \cite[Chapter I.2.1]{BG02}. A quantum plane is a non-commutative algebra generated by $x,y$ with defining relation $xy=qyx$ for some fixed nonzero scalar $q$. 
When $q=1$,
this module variety is the commuting variety studied by Motzkin and Taussky \cite{MT55} and Gerstenhaber \cite{Ger61}; When $q=-1$, it is the anti-commuting variety studied by the first author and Wang \cite{CW18}, whose irreducible components and their dimensions are described explicitly.

If $q$ is a root of unity, let $\ell\in\bbZ_{>0}$ be the smallest number such that $q^\ell=1$, otherwise let $\ell:=\infty$.
The {\em $q$-commuting variety} is defined to be
\begin{equation*}
\scrK_{q,n} =\{(A, B) \in M_{n\times n}(\bbC) \times M_{n\times n}(\bbC) \mid AB=qBA \}.
\end{equation*}

Following \cite{CB-S}, for any $n\geq1$, the {module variety} $\mod^n(\bbA_q^2)$ is the set of $\bbA_q^2$-module structures on $\bbC^n$, or equivalently the set of $\bbC$-algebra homomorphism from $\bbA_q^2$ to $M_{n\times n}(\bbC)$. Now such a homomorphism is determined by the values on $x,y$.
Then we have
\begin{equation*}
\mod^n(\bbA_q^2)\cong\scrK_{q,n}.
\end{equation*}

The aim of this paper is to understand the geometric properties of $\scrK_{q,n}$ and its GIT quotient, especially the irreducible components and their dimensions. In order to achieve this, beside the linear algebra techniques used in \cite{CW18}, we in addition apply the homological algebra techniques and decomposition properties of modules following \cite{CB-S}.

\subsection{The main result}
Let $\ML_{q,n}$ be the indexing set defined in \eqref{ML}. For each $(\bfm,\bfr)$ in $\ML_{q,n}$, a closed subvariety  $\scrK_{q,(\bfm,\bfr)}$ (see \eqref{eqn: def of irr}) of $\scrK_{q,n}$ is introduced. Then the first main result of this paper is the following:


\begin{thm}[Theorem \ref{thm: irreducible comp}, Theorem \ref{prop: dimension of Cl}, Proposition \ref{prop: irreducible l infty}]
\label{main theorem}
For each $(\bfm,\bfr)\in\ML_{q,n}$, the variety $\scrK_{q,n}$ has one irreducible component $\scrK_{q,(\bfm,\bfr)}$ so that
$\scrK_{q,n} =\bigcup_{(\bfm,\bfr)\in \ML_{q,n}} \scrK_{q,(\bfm,\bfr)}$, and $\dim \scrK_{q,(\bfm,\bfr)}=n^2+m_\ell$ if $\ell<\infty$; $\dim \scrK_{q,(\bfm,\bfr)}=n^2$ if $\ell=\infty$.
\end{thm}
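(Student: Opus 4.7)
The plan is to stratify $\scrK_{q,n}$ by the generic decomposition type of an $\bbA_q^2$-module into indecomposable summands, identify these strata with the subvarieties $\scrK_{q,(\bfm,\bfr)}$, and compute their dimensions via the Crawley-Boevey--Schr\"oer style formula
\[
\dim\bigl(\text{irreducible component through }M\bigr)=n^2-\dim\operatorname{End}_{\bbA_q^2}(M)+\dim\Ext^1_{\bbA_q^2}(M,M),
\]
valid for $M$ in the open stratum of its component (see \cite{CB-S}). Accordingly, I first need the classification of finite-dimensional indecomposable $\bbA_q^2$-modules, which depends on $\ell$. When $\ell=\infty$ the indecomposables are of ``nilpotent'' type, described by Jordan-block data for one generator after the other has been made nilpotent, with no genuinely two-parameter families of simples. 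When $\ell<\infty$ the elements $x^{\ell}$ and $y^{\ell}$ become central, producing an additional two-parameter family of $\ell$-dimensional simple modules parametrised by central characters on a torus; the remaining indecomposables are again of discrete combinatorial type.

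The data $(\bfm,\bfr)\in\ML_{q,n}$ then encodes a generic decomposition: $\bfr$ records the multiplicities of the discrete indecomposable types and $\bfm$ (in particular the coordinate $m_\ell$) records the number of summands drawn from the $\ell$-dimensional continuous family. A resolution-level computation of $\Hom$ and $\Ext^1$ between any two indecomposables, using the length-two Koszul-type resolution of the trivial $\bbA_q^2$-module and the $q$-twisted bimodule structure, combined with the additivity of $\Hom$ and $\Ext^1$ across direct sums, should yield
\[
-\dim\operatorname{End}(M)+\dim\Ext^1(M,M)=\begin{cases}m_\ell&\text{if }\ell<\infty,\\ 0&\text{if }\ell=\infty,\end{cases}
\]
for a generic $M$ of type $(\bfm,\bfr)$; concretely, only the self-extensions of the torus simples contribute an excess (of exactly $1$ per such summand) over the corresponding endomorphism count. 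Irreducibility of each $\scrK_{q,(\bfm,\bfr)}$ then follows because its open generic stratum fibres over a product of irreducible parameter spaces (affine spaces for the discrete types, a torus for each continuous simple factor) with $\GL_n$-orbits as fibres, so its closure in $\scrK_{q,n}$ is irreducible of the predicted dimension.

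To conclude that the family $\{\scrK_{q,(\bfm,\bfr)}\}_{(\bfm,\bfr)\in\ML_{q,n}}$ exhausts the irreducible components of $\scrK_{q,n}$, I would invoke upper semicontinuity of $\dim\operatorname{End}(-)$ and $\dim\Ext^1(-,-)$ along the variety: any irreducible component $Z$ contains a non-empty open locus on which these invariants are minimal, hence on which the generic decomposition type is constant; $Z$ must then be the closure of the corresponding stratum, which is one of our $\scrK_{q,(\bfm,\bfr)}$. The main obstacle is the case-by-case $\Ext^1$ computation at a root of unity, where the $q$-twist together with the mixing of nilpotent and torus summands produces several cases that must be handled individually; each, however, reduces via the Koszul resolution to a finite linear-algebra check, after which the dimension identity above and additivity across the summands of a generic module close the proof.
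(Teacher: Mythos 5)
There is a genuine gap, and it sits exactly where the paper does its hardest work: the exhaustion statement $\scrK_{q,n}=\bigcup_{(\bfm,\bfr)\in\ML_{q,n}}\scrK_{q,(\bfm,\bfr)}$. Your plan derives it from a purported classification of finite-dimensional indecomposable $\bbA_q^2$-modules (``discrete combinatorial types'' plus one continuous family of $\ell$-dimensional simples) together with semicontinuity of $\dim\operatorname{End}$ and $\dim\Ext^1$. No such classification is available: already at $q=1$ this is the wild problem of pairs of commuting matrices, and for $q$ a root of unity it is no better, so the asserted list of indecomposables cannot be invoked. What the Crawley-Boevey--Schr\"oer theory actually reduces the problem to is identifying the \emph{generically indecomposable} irreducible components in every dimension $\le n$ and checking $\ext^1$-vanishing between them; proving that these are precisely $\scrD_{q,i}$ ($i\le\ell$) and $\scrN_{q,j}$ ($j<\ell$), and that nothing else occurs, is the content you are missing. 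The paper does this not homologically but by explicit linear algebra: an induction for $n\le\ell$ (Theorem \ref{prop: irreducible components for smaller n}, using Lemmas \ref{lem:nilpotent1} and \ref{lem:invereigen}), and then, for general $n$, a case-by-case analysis of $\scrR_{q,A}$ for each Jordan type of $A$ via the $q$-chain machinery (Propositions \ref{Prop:QCJor}, \ref{prop:block a,aq^{-(l-1)}}, \ref{prop:KJsn}, \ref{prop:Jordan block 0}), showing a dense open subset of each $\scrR_{q,A}$ lands in some $\scrK_{q,(\bfm,\bfr)}$. Your semicontinuity argument only says a component has a well-defined generic decomposition; it does not rule out components whose generic module is indecomposable of dimension other than the listed types, which is precisely the point at issue.

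A second, smaller gap concerns the dimension count. The formula $\dim C=n^2-\dim\operatorname{End}(M)+\dim\Ext^1(M,M)$ at a generic $M$ is in general only an upper bound (it is the dimension of the tangent space to the module scheme at $M$); equality requires knowing $M$ is a smooth point, which you do not establish. Your arithmetic ($\dim\Ext^1(M,M)-\dim\operatorname{End}(M)=m_\ell$ for generic $M$ of type $(\bfm,\bfr)$) is consistent with the paper's answer $n^2+m_\ell$, but it is not a proof without that smoothness input. The paper avoids this by computing $\dim\scrD_{q,i}=i^2$, $\dim\scrD_{q,\ell}=\ell^2+1$, $\dim\scrN_{q,j}=j^2$ directly through the fibration $\phi_{q,i}$, $\psi_{q,j}$ (Lemma \ref{prop: dimension of generators}) and then using the exact direct-sum dimension formula of Lemma \ref{lem:C-BS}, which involves only $\hom$ between distinct summands (these vanish by Lemma \ref{lem:extension0}). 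If you want to keep your route, you would need either to verify scheme-smoothness of the generic points or to supply an independent matching lower bound for each $\scrK_{q,(\bfm,\bfr)}$; and in any case you must replace the appeal to a classification of indecomposables by an actual covering argument such as the paper's.
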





Let $G=\GL_n$. We consider the conjugation action of $G$ on $\scrK_{q,n}$, and the GIT quotient $\scrK_{q,n}//G$. In order to describe the irreducible components of $\scrK_{q,n}//G$, another indexing set $\TPL_{q,n}$ is introduced in \eqref{TPL}, and the variety $\scrZ_{p,m,r}//G$ is defined for $(p,m,r)\in\TPL_{q,n}$ in \eqref{def:Zpmr}.

\begin{thm}[Proposition \ref{prop: irr 2}, Proposition \ref{prop:pure dimension}, Lemma \ref{lem: l=0 irr}, Lemma \ref{lem:pure dimension}]
\label{main thm 2}
For each $(p,m,r)\in\TPL_{q,n}$, the GIT quotient $\scrK_{q,n}//G$ has one irreducible component $\scrZ_{q,(p,m,r)}//G$ so that
$\scrK_{q,n} //G=\bigcup_{(p,m,r)\in \TPL_{q,n}} \scrZ_{q,(p,m,r)}//G$. Moreover, we have $\dim \scrZ_{q,(p,m,r)}//G=n+(2-\ell)p$ if $\ell<\infty$; $\dim \scrZ_{q,(p,m,r)}//G=n$ if $\ell=\infty$.
\end{thm}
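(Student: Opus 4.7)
The plan proceeds in three stages, leveraging Theorem \ref{main theorem} together with standard facts about affine GIT. The categorical quotient map $\pi\colon \scrK_{q,n}\to\scrK_{q,n}//G$ is surjective and sends irreducible sets to irreducible sets; moreover, its closed points correspond bijectively to closed $G$-orbits, i.e., to isomorphism classes of $n$-dimensional semisimple $\bbA_q^2$-modules. Applying $\pi$ to the decomposition in Theorem \ref{main theorem} gives $\scrK_{q,n}//G=\bigcup_{(\bfm,\bfr)\in\ML_{q,n}}\pi(\scrK_{q,(\bfm,\bfr)})$, a finite union of irreducible closed subsets, and the irreducible components of the quotient are the maximal members of this family.

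Second, I would classify the simple $\bbA_q^2$-modules and use them to parametrize semisimple modules. For $\ell<\infty$, the simples split into one-dimensional ones (either $x\mapsto a$, $y\mapsto 0$, or $x\mapsto 0$, $y\mapsto b$, each parametrized by $K$) and $\ell$-dimensional simples, parametrized by $(K^*)^2$ via the central scalars of $x^\ell$ and $y^\ell$; when $\ell=\infty$ only the one-dimensional simples survive. A semisimple $n$-dimensional module is a multiset of such simples, and the numerical data recording how many summands of each type occur is exactly what $\TPL_{q,n}$ encodes, with $p$ counting the $\ell$-dimensional summands and $m,r$ recording the two one-dimensional families. The locus of semisimple modules of a fixed type $(p,m,r)$ is irreducible (a symmetric product of affine spaces), and its closure is $\scrZ_{q,(p,m,r)}//G$. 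Semisimplification defines a surjection $\ML_{q,n}\twoheadrightarrow\TPL_{q,n}$ that identifies which indices collapse to the same image, and a generic-point argument shows that different types give components with non-isomorphic generic semisimplifications, so none of these closures is contained in another.

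For the dimension computation, one counts parameters of a generic closed orbit representative of type $(p,m,r)$. When $\ell<\infty$, the $p$ summands of $\ell$-dimensional simples contribute $2$ parameters each while the $n-\ell p$ summands of one-dimensional simples contribute $1$ parameter each, yielding $2p+(n-\ell p)=n+(2-\ell)p$. When $\ell=\infty$ every summand is one-dimensional, contributing $1$ parameter each, for a total of $n$. Pure-dimensionality of $\scrZ_{q,(p,m,r)}//G$ (from Proposition \ref{prop:pure dimension} and Lemma \ref{lem:pure dimension}) ensures this generic count equals the Krull dimension. Alternatively, one can combine Theorem \ref{prop: dimension of Cl} with the identity $\dim\scrZ_{q,(p,m,r)}//G=\dim\scrK_{q,(\bfm,\bfr)}-n^2+\dim\mathrm{Stab}_G(M)$ for a generic semisimple $M$, reading off the stabilizer dimension from the multiplicity structure of $M$.

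The principal obstacle is the bookkeeping required to match the two stratifications: verifying that the semisimplification map $\ML_{q,n}\to\TPL_{q,n}$ is well-defined and that $\scrZ_{q,(p,m,r)}//G$ coincides with the union of $\pi(\scrK_{q,(\bfm',\bfr')})$ for all $(\bfm',\bfr')$ in its fiber. This reduces to a deformation analysis showing that within each $\scrK_{q,(\bfm,\bfr)}$, one can degenerate the generic indecomposable summands to their semisimplifications without leaving the component, which is what forces the target indexing $\TPL_{q,n}$ to be genuinely coarser than $\ML_{q,n}$ while retaining exactly one component per element.
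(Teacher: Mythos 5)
Your outline is sound and hits the key structural points, but it packages the argument differently from the paper and leaves the hard step unproved. The paper does not apply $\pi$ to the full decomposition $\scrK_{q,n}=\bigcup_{(\bfm,\bfr)}\scrK_{q,(\bfm,\bfr)}$ and then collapse indices; instead it works directly with closed orbits. Lemma~\ref{lem:U polystable} shows every point of the dense open $\calU_{p,m,r}\subseteq\scrZ_{p,m,r}$ has a closed $G$-orbit (these are exactly the semisimple modules of type $(p,m,r)$), and Lemma~\ref{lem: closed 1} proves by case analysis on the Jordan form of $A$ that \emph{every} closed orbit in $\scrK_{q,n}$ already lies in some $\scrZ_{p,m,r}$. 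Together with the general GIT facts in Theorem~\ref{fundamental theorem of GIT}, this gives the covering \eqref{eqn: cover} without ever passing through $\ML_{q,n}$. The non-containment among the $\scrZ_{p,m,r}//G$ is then deduced not from ``non-isomorphic generic semisimplifications'' in the abstract but from the concrete fact that the $\scrZ_{p,m,r}$ are already pairwise distinct irreducible components of $\scrK_{q,n}$ (Theorem~\ref{thm: irreducible comp}): if $\scrZ_{p,m,r}//G\subseteq\scrZ_{p',m',r'}//G$, the closed orbits from $\calU_{p,m,r}$ would be forced into $\scrZ_{p',m',r'}$ (by uniqueness of the closed orbit in each fiber), whence $\scrZ_{p,m,r}\subseteq\scrZ_{p',m',r'}$, a contradiction.

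Two specific gaps remain in what you wrote. First, the statement that the family $\{\pi(\scrK_{q,(\bfm,\bfr)})\}$ covers the quotient and that the maximal members are the components is true but not self-evident; your reduction of this to a ``deformation analysis'' showing $\scrZ_{q,(p,m,r)}//G=\bigcup\pi(\scrK_{q,(\bfm',\bfr')})$ over the semisimplification fiber is precisely the content of Lemma~\ref{lem: closed 1}, which you acknowledge but do not prove. Note also that for an individual $(\bfm,\bfr)$ in the fiber, $\pi(\scrK_{q,(\bfm,\bfr)})$ can be a \emph{proper} subvariety of $\scrZ_{q,(p,m,r)}//G$ (e.g.\ $\pi(\scrD_{q,2})$ is one-dimensional inside the two-dimensional $\scrZ_{0,2,0}//G$), so one really must take the union and identify its closure; this is worth making explicit. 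Second, your justification of the dimension formula invokes ``pure-dimensionality from Proposition~\ref{prop:pure dimension} and Lemma~\ref{lem:pure dimension},'' which is circular since those are the results being proven. The clean route is the paper's: since each $\scrZ_{p,m,r}//G$ is irreducible (being the image of the irreducible $\scrZ_{p,m,r}$), compute $\dim\widetilde{\calU}_{p,m,r}=\dim\calU_{p,m,r}-\dim G\cdot(A,B)$ using $\dim\calU_{p,m,r}=n^2+p$ (Theorem~\ref{prop: dimension of Cl}) and $\dim G_{(A,B)}=n-(\ell-1)p$. That said, your parameter count ($2p$ from the two-parameter family of $\ell$-dimensional simples plus $n-\ell p$ from the one-dimensional ones) is a legitimate alternative calculation of the same number and could replace the stabilizer computation once irreducibility of the quotient component is in hand.
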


The quantum plane can be realised as a quiver algebra, with one vertex and two loops. It is natural to study modules over more general quiver algebras with loops.

Meanwhile, the $r$-tuple commuting varieties are studied in \cite{Ger61,KN87}, so it will be interesting to study the geometric properties of $r$-tuple $q$-commuting varieties, i.e., the module varieties over quantum affine $r$-spaces.




\subsection{The organization}
The paper is organized as follows. We describe the concrete form of matrices $q$-commute with a matrix in Jordan normal form in Section~\ref{sec: pre}. In Section~\ref{sec: q-commuting var}, the subvarieties $\scrD_{q,i}$, $\scrN_{q,j}$ of $\scrK_{q,n}$ are defined, thus the subvariety $\scrK_{q,(\bfm,\bfr)}$ of $\scrK_{q,n}$ is defined for each $(\bfm,\bfr)$ in the indexing set $\ML_{q,n}$.  

We show $\scrK_{q,(\bfm,\bfr)}$ is an irreducible component of $\scrK_{q,n}$ for each $(\bfm,\bfr)\in \ML_{q,n}$ in Section~\ref{subsec: irreducible}. 
By a reduction via the ``direct sum" property and the homological properties described in Lemma \ref{lem:extension0}, we only need to prove that $\scrD_{q,i}$ and $\scrN_{q,j}$ are irreducible components, see Theorem \ref{prop: irreducible components for smaller n}.

In Section~\ref{sec: further prel}, we  introduce the notion of $q$-chains when $q$ is a root of unity, in order to figure out which component $\scrK_{q,(\bfm,\bfr)}$ contains a given pair $(A,B) \in \scrK_{q,n}$, where $A$ has eigenvalues $aq^{-i}$ with a fixed  $a\in \bbC^*$.

It is established in Section~\ref{sec: cover} that  the irreducible components of $\scrK_{q,n}$ are all $\scrK_{q,(\bfm,\bfr)}$ when $q$ is a root of unity. 
We determine the component $\scrK_{q,(\bfm,\bfr)}$ which contains a given pair $(A,B) \in \scrK_{q,n}$, where either $A$ has eigenvalues $aq^{-i}$ for a fixed $a\in \bbC^*$( see Proposition \ref{Prop:QCJor} and Proposition \ref{prop:block a,aq^{-(l-1)}}), or $A$ is nilpotent (see Proposition~\ref{prop:Jordan block 0}).

Section~\ref{sec: dim} is devoted to compute the dimension of each irreducible component $\scrK_{q,(\bfm,\bfr)}$. In Theorem \ref{prop: dimension of Cl}, we prove that $\dim \scrK_{q,(\bfm,\bfr)}=n^2+m_\ell$. 

In Section~\ref{sec:GIT}, we describe the irreducible components of $\scrK_{q,n}//\GL_n$, and their dimensions.

In section~\ref{section:infinite}, we summarize the results for the case $\ell=\infty$.

 \vspace{.4cm}
 {\bf Acknowledgment.}
This paper answers a question posted by Weiqiang Wang,
we would like to thank him for numerous stimulating discussions. We are grateful to University of Virginia for hospitality during our visit when this project was initiated. The authors are partially supported by NSFC grant No.~11601441. 

\section{Preliminaries}
\label{sec: pre}
Throughout this paper, $K$ will be an algebraically closed filed of characteristic zero, and $q\in\bbC^*$. Denote by $M_{n\times n}(\bbC)$ the space of $n\times n$ matrices over $\bbC$.

We denote by $\bbN$ and $\bbZ$ the sets of nonnegative integers and integers respectively. Let $\ell\in\bbZ_{>0}$ be the smallest number such that $q^\ell=1$ if $q$ is a root of unity; and $\ell:=\infty$ otherwise.

\subsection{$q$-commuting varieties}

The {\em $q$-commuting variety} is
\begin{equation}
\scrK_{q,n} =\{(A, B) \in M_{n\times n}(\bbC) \times M_{n\times n}(\bbC) \mid AB=qBA \}.
\end{equation}

For any $m\leq n$, $\scrK_{q,m}$ is a subvariety of $\scrK_{q,n}$ by viewing $(A,B)$ in $\scrK_{q,m}$ as $(\diag(A,0),\diag(B,0))$ in $\scrK_{q,n}$.
In this way, define
\begin{equation}
{\scrK}_{q,\infty}:= \lim\limits_{\longleftarrow}(\scrK_{q,n}).
\end{equation}
\label{def:RqA}
For any $A\in M_{n\times  n}(\bbC)$, define
\begin{align}
\scrR_{q,A}:=\{B\in M_{n\times n}(\bbC)\mid AB=qBA\}.
\end{align}

By definition,
\begin{equation}
\scrK_{q^{-1},n} =\{(A, B) \in M_{n\times n}(\bbC) \times M_{n\times n}(\bbC) \mid AB=q^{-1}BA \},
\end{equation}
so there exists a natural morphism of varieties
\begin{align}
\label{def: theta}
\theta: \scrK_{q,n} \rightarrow \scrK_{q^{-1},n}
\end{align}
by mapping $(A,B)$ to $(B,A)$. Obviously, $\theta$ is an isomorphism of varieties. 

 For any $A,B\in M_{n\times n}(\bbC)$, 
 we say $B$ \emph{$q$-commutes with $A$} if $AB=qBA$.

In the remainder of this section, we describe all matrices which $q$-commute with a matrix in Jordan normal form. The notations introduced here are based on \cite{CW18}.

\subsection{$q$-layered matrix}
Consider $m\times n$ matrices over $\bbC$ of the form:
\begin{eqnarray}
\label{eq:block a1}
\L^q(m,n,\vec{v})=\left [\begin{array}{cccccccccc}
 0      & \cdots  & 0      & b_1     & b_2        &\cdots     &  b_{m-1}             & b_m                 \\
 0      & \cdots  & 0      & 0       & qb_1  &\cdots          & b_{m-2}             & qb_{m-1}           \\
 \vdots &         & \vdots & \vdots  & \vdots           & \ddots    & \vdots               & \vdots             \\
 0      & \cdots  & 0      & 0       &  0         &\cdots     & q^{m-2}b_1          &q^{m-2}b_2       \\
 0      & \cdots  & 0      & 0       & 0          &\cdots     & 0                    &q^{m-1}b_1
\end{array} \right], \ \text{if}\ m\leq n,
\end{eqnarray}
or
\begin{eqnarray}
\label{eq:block a2}
\L^q(m,n,\vec{v})=\left [\begin{array}{cccccccccc}
  b_1     & b_2      &    \cdots       & b_{n-1}           &b_n                \\
   0      &qb_1      & \cdots          &qb_{n-2}           &qb_{n-1}            \\
   \vdots       &      \vdots    &  \ddots         &      \vdots             &       \vdots             \\
   0      &    0     &    \cdots             & q^{n-2} b_1    &    q^{n-2} b_2       \\
   0      &     0    &   \cdots        &     0             &   q^{n-1} b_1    \\
   0      &    0     &   \cdots        &   0               &  0                  \\
 \vdots   &\vdots    &                 &\vdots             &\vdots               \\
  0       &    0     &    \cdots       &    0              &  0
 \end{array} \right], \ \text{if}\ m\geq n,
\end{eqnarray}
where $\vec{v}=(b_1,\ldots,b_{\min(m,n)})$. 
We call matrices of the form $\L^q(m,n,\vec{v})$ {\em $q$-layered matrices}.

Let $J_m(\alpha)$ be the $m\times m$ Jordan block with eigenvalue $\alpha$. The following lemma can be proved by a direct computation and we omit the details.
\begin{lem}
   \label{Jordan q-layer}
For $\alpha_1, \alpha_2 \in \bbC$, a matrix $B$ satisfies $J_{m}(\alpha_1) B =qB J_{n}(\alpha_2)$ if and only if
\begin{eqnarray*}
B=\begin{cases}
  0                                                    & \   \text{if}\  \alpha_1\neq q\alpha_2 \\
\L^q(m,n,\vec{v})                                        & \  \text{if}\   \alpha_1=q\alpha_2,
\end{cases}
\end{eqnarray*}
for some vector $\vec{v}$. 
\end{lem}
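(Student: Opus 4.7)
The plan is to reduce the matrix equation to an entrywise recurrence. First I would decompose $J_m(\alpha_1)=\alpha_1 I_m+N_m$ and $J_n(\alpha_2)=\alpha_2 I_n+N_n$, where $N_m,N_n$ denote the standard nilpotent Jordan blocks (with $1$'s on the superdiagonal). Then the identity $J_m(\alpha_1)B=qBJ_n(\alpha_2)$ is equivalent to
\[(\alpha_1-q\alpha_2)B=qBN_n-N_mB.\]
Writing $B=(b_{ij})$ and setting $\beta:=\alpha_1-q\alpha_2$, a direct inspection of $N_mB$ and $BN_n$ shows that this matrix identity is equivalent to the scalar system
\[\beta\, b_{ij}+b_{i+1,j}=q\, b_{i,j-1},\qquad 1\le i\le m,\ 1\le j\le n,\]
with the boundary conventions $b_{i,0}:=0$ and $b_{m+1,j}:=0$.

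In the case $\beta\neq 0$ (i.e., $\alpha_1\neq q\alpha_2$), I would show by induction on $j$ that the $j$-th column of $B$ vanishes. For $j=1$ the recurrence reads $\beta b_{i,1}=-b_{i+1,1}$ together with $b_{m+1,1}=0$; a reverse induction on $i$ starting from $i=m$ forces $b_{i,1}=0$ for all $i$. For $j\ge 2$, once the $(j-1)$-st column is known to vanish, the recurrence becomes $\beta b_{i,j}=-b_{i+1,j}$ and the same reverse induction on $i$ yields the vanishing of the $j$-th column. Thus $B=0$.

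In the case $\beta=0$ (i.e., $\alpha_1=q\alpha_2$), the recurrence reduces to $b_{i+1,j}=q\,b_{i,j-1}$, so along every diagonal one has $b_{i+k,j+k}=q^k b_{i,j}$. Combined with the boundary conventions, every entry lying strictly below-and-to-the-left of the ``top-right'' $\min(m,n)$ diagonals is forced to be zero, while each of those remaining diagonals is determined by a single free scalar. Choosing these $\min(m,n)$ parameters to be $v_1,\ldots,v_{\min(m,n)}$ (the nonzero entries in the first row when $m\le n$, or in the first column when $m\ge n$), I would verify that the resulting matrix is exactly $\L^q(m,n,\vec{v})$ in the form \eqref{eq:block a1}--\eqref{eq:block a2}. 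The converse direction (that any such $\L^q(m,n,\vec{v})$ satisfies the $q$-commutation when $\alpha_1=q\alpha_2$) is then an immediate substitution, since the defining pattern of $\L^q(m,n,\vec v)$ is tailored to the recurrence $b_{i+1,j}=qb_{i,j-1}$.

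The main obstacle is purely notational: in the $\beta=0$ case one must carefully match indices so that the free diagonals are correctly parametrised by $v_1,\ldots,v_{\min(m,n)}$ and the triangular block of forced zeros matches the displays \eqref{eq:block a1}--\eqref{eq:block a2}. There is no genuine conceptual difficulty, which is consistent with the authors' remark that the lemma can be proved by direct computation.
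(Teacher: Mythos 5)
Your argument is correct and is exactly the ``direct computation'' the paper invokes without recording: you reduce the matrix equation to the entrywise recurrence $(\alpha_1-q\alpha_2)b_{ij}+b_{i+1,j}=qb_{i,j-1}$ (with $b_{i,0}=b_{m+1,j}=0$), kill $B$ by reverse induction when $\alpha_1\neq q\alpha_2$, and when $\alpha_1=q\alpha_2$ propagate along diagonals via $b_{i+1,j+1}=qb_{ij}$, matching \eqref{eq:block a1}--\eqref{eq:block a2}. One small slip: in the case $m\ge n$ the free parameters $v_1,\ldots,v_n$ are still the entries of the first \emph{row} (cf.\ the display \eqref{eq:block a2}, whose first column is $(b_1,0,\ldots,0)^{\mathsf T}$), not the first column as you wrote; this does not affect the substance of the argument.
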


\begin{prop}
\label{QCmatrix}
Let $A=\diag(J_{m_1}(\alpha_1),J_{m_2}(\alpha_2),...,J_{m_r}(\alpha_r))$. Then a matrix $B$ satisfies $AB=qBA$ if and only if $B=(B_{ij})$ is an $r\times r$ block matrix with its $m_i\times m_j$-submatrix
\begin{eqnarray*}
B_{ij}=\begin{cases}
0                                                       &   \text{if}\ \alpha_i\neq q\alpha_j \\
\L^q( m_i,m_j,\vec{v}_{ij})                               &   \text{if}\    \alpha_i=q\alpha_j,
\end{cases}
\end{eqnarray*}
for arbitrary $\min (m_i,m_j)$-tuple vectors $\vec{v}_{ij}$ for all $i,j$.
\end{prop}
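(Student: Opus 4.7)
The plan is to reduce Proposition \ref{QCmatrix} to a blockwise application of Lemma \ref{Jordan q-layer}. Since $A$ is block-diagonal with Jordan blocks $J_{m_i}(\alpha_i)$ of sizes $m_i \times m_i$, any $n \times n$ matrix $B$ can be partitioned conformally into an $r \times r$ array of blocks $B_{ij}$, where $B_{ij}$ is of size $m_i \times m_j$. The whole proof rests on translating the single matrix equation $AB = qBA$ into a family of independent equations, one for each pair $(i,j)$.

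First I would carry out the block multiplications. Because $A$ is block-diagonal, the $(i,j)$-block of $AB$ is precisely $J_{m_i}(\alpha_i)\, B_{ij}$, and the $(i,j)$-block of $BA$ is $B_{ij}\, J_{m_j}(\alpha_j)$. Therefore the equation $AB = qBA$ is equivalent to the system
\begin{equation*}
J_{m_i}(\alpha_i)\, B_{ij} \;=\; q\, B_{ij}\, J_{m_j}(\alpha_j) \qquad \text{for all } 1 \le i,j \le r.
\end{equation*}
This is the key reduction: the global $q$-commutation is decoupled into $r^2$ local $q$-commutation conditions, each of the exact form treated in Lemma \ref{Jordan q-layer}.

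Next I would apply Lemma \ref{Jordan q-layer} to each pair $(i,j)$. By the lemma, the block $B_{ij}$ is forced to be zero whenever $\alpha_i \neq q\alpha_j$, and otherwise it is of the form $\L^q(m_i, m_j, \vec{v}_{ij})$ for some freely chosen $\min(m_i, m_j)$-tuple $\vec{v}_{ij}$. Assembling these blocks back into a full matrix recovers exactly the description stated in the proposition. Conversely, any $B$ of the stated block shape satisfies each local equation by the converse direction of Lemma \ref{Jordan q-layer}, hence satisfies $AB = qBA$.

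Since the argument is a routine bookkeeping exercise once the blockwise reformulation is made, there is no real obstacle; the only mildly delicate point is making sure the dimensions in $\L^q(m_i, m_j, \vec{v}_{ij})$ are assigned in the correct order, because the two formulas \eqref{eq:block a1} and \eqref{eq:block a2} differ depending on whether $m_i \le m_j$ or $m_i \ge m_j$. I would therefore pay explicit attention to this case distinction when writing up the proof, but otherwise no new ideas beyond Lemma \ref{Jordan q-layer} are needed.
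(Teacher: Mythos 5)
Your proof is correct and follows exactly the same route as the paper: partition $B$ conformally with the Jordan block structure of $A$, observe that $AB=qBA$ decouples into the blockwise conditions $J_{m_i}(\alpha_i)B_{ij}=qB_{ij}J_{m_j}(\alpha_j)$, and apply Lemma~\ref{Jordan q-layer} to each block. Nothing more is needed.
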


\begin{proof}
The $(i,j)$-th block of the equation $AB-qBA=0$ reads $ J_{m_i}(\alpha_i)B_{ij}-qB_{ij}J_{m_j}(\alpha_j)=0$. The proposition now follows from Lemma~\ref{Jordan q-layer}.
\end{proof}
\subsection{q-layered block matrices}

Let
\begin{align}
\label{eqn: J}
&\mathbb{J}_{s,n}(0)=
{
\begin{bmatrix}
0      &  I_n  & 0     & \ldots  &  0      & 0              \\
0      &  0    &I_n    & \ldots  &  0      & 0              \\
\vdots &\vdots &\vdots &         & \vdots  &  \vdots   \\
0      &  0    &0      & \ldots  &  0      & I_n             \\
0      &  0    &0      & \ldots  &  0      & 0              \\
 \end{bmatrix},
 }
\end{align}
be an $sn\times sn$ matrix in $s\times s$ blocks,
where $I_n$ is the $n\times n$ identity matrix. Notice that $\mathbb{J}_{1,n}(0)$ is the $n\times n$ zero matrix. For any $\alpha \in \bbC$, let
\[
\mathbb{J}_{s,n}(\alpha) := \alpha I_{sn} +\mathbb{J}_{s,n}(0),
\]
and
\[
J_s^n(\alpha) :=\diag(\underbrace{J_s(\alpha),\ldots,J_s(\alpha)}_{n}).
\]
Notice $J_s^n(\alpha) = \alpha I_{sn} + J_s^n(0)$.

\begin{lem}[\cite{CW18}]
  \label{lem:sJordan}
The matrix $J_s^n(\alpha)$ is similar to the  matrix $\mathbb{J}_{s,n}(\alpha)$.
\end{lem}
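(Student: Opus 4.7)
The plan is to reduce immediately to the nilpotent case and then exhibit an explicit permutation similarity. Since both matrices satisfy $\mathbb{J}_{s,n}(\alpha) - \alpha I_{sn} = \mathbb{J}_{s,n}(0)$ and $J_s^n(\alpha) - \alpha I_{sn} = J_s^n(0)$, and conjugation commutes with scalar shifts, it suffices to show that $J_s^n(0)$ and $\mathbb{J}_{s,n}(0)$ are similar. Both are nilpotent $sn\times sn$ matrices, so the question reduces to matching Jordan types.

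The clean way is to write down the similarity directly. Index the standard basis of $\mathbb{C}^{sn}$ for $\mathbb{J}_{s,n}(0)$ as $\{e_{i,j} : 1\le i\le s,\ 1\le j\le n\}$ according to the $s\times s$ block structure with blocks of size $n$, so that from the definition \eqref{eqn: J} one reads off
\[
\mathbb{J}_{s,n}(0)\, e_{i,j} \;=\; \begin{cases} e_{i-1,j}, & i\ge 2,\\ 0, & i=1.\end{cases}
\]
Index the standard basis for $J_s^n(0)$ as $\{f_{j,i} : 1\le j\le n,\ 1\le i\le s\}$ according to its block-diagonal structure of $n$ Jordan blocks of size $s$, so that
\[
J_s^n(0)\, f_{j,i} \;=\; \begin{cases} f_{j,i-1}, & i\ge 2,\\ 0, & i=1.\end{cases}
\]
Define the linear map $P$ on $\mathbb{C}^{sn}$ by $P f_{j,i} = e_{i,j}$. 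Since $P$ merely permutes standard basis vectors, it is invertible, and a direct check gives $P J_s^n(0) f_{j,i} = P f_{j,i-1} = e_{i-1,j} = \mathbb{J}_{s,n}(0)\, e_{i,j} = \mathbb{J}_{s,n}(0)\, P f_{j,i}$ for $i\ge 2$, both sides being zero for $i=1$. Hence $P J_s^n(0) P^{-1} = \mathbb{J}_{s,n}(0)$, and adding $\alpha I_{sn}$ to both sides yields $P J_s^n(\alpha) P^{-1} = \mathbb{J}_{s,n}(\alpha)$.

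As a sanity check one can instead verify that the two matrices have the same Jordan type: $J_s^n(0)$ manifestly has $n$ Jordan blocks of size $s$, while $\mathbb{J}_{s,n}(0)^k$ shifts the block structure to place $I_n$ on the $k$-th block superdiagonal for $0\le k< s$ and vanishes for $k\ge s$, so $\operatorname{rank}\mathbb{J}_{s,n}(0)^k = (s-k)n$ for $0\le k\le s$, matching the ranks of the powers of $J_s^n(0)$. I do not foresee a genuine obstacle here; the only thing to get right is the bookkeeping of the two different block conventions, which is precisely what the permutation $f_{j,i}\mapsto e_{i,j}$ handles.
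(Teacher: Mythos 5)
Your proof is correct, and since the paper simply cites \cite{CW18} for this lemma without reproducing an argument, there is no in-paper proof to compare against; the explicit permutation similarity $P f_{j,i} = e_{i,j}$ that interchanges the two block conventions is the standard and essentially unique natural argument. The reduction to $\alpha=0$ is valid because conjugation fixes $\alpha I_{sn}$, the bookkeeping of the two indexings is handled carefully, and the rank sanity check $\operatorname{rank}\mathbb{J}_{s,n}(0)^k = (s-k)n$ for $0\le k\le s$ correctly confirms both matrices have Jordan type $(s,\dots,s)$ with $n$ parts.
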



%
Consider the following $s\times t$ block matrices
\begin{align}\label{eq:block al1}
\QLB(s,t,\vec{V})=
\begin{bmatrix}
B_1      &  B_2     & \cdots  & B_{t-1}    & B_t             \\
0        & qB_1     & \cdots  &qB_{t-2}    & qB_{t-1}           \\
\vdots   &\vdots    &         &\vdots      &  \vdots   \\
0        &  0       & \cdots  &q^{t-2}B_1  &  q^{t-2}B_2          \\
0        &  0       & \cdots  & 0          &  q^{t-1}B_1      \\
0        &  0       & \cdots  &0           &  0      \\
\vdots   & \vdots   &         &\vdots           & \vdots     \\
0  & 0   &     \cdots    &0          & 0
 \end{bmatrix}, \text{ if}\ s\geq t;
\end{align}
or
\begin{align}\label{eq:block al2}
\QLB(s,t,\vec{V})=
\begin{bmatrix}
0      &\cdots   & 0     & B_1      &  B_2       & \cdots   & B_{s-1}     & B_s             \\
0      & \cdots  & 0     & 0        & qB_1       & \cdots   & qB_{s-2}    & qB_{s-1}           \\
\vdots &                 &  \vdots  & \vdots     &  \vdots&        &\vdots       &  \vdots   \\
0      &\cdots   &0      & 0        & 0          &  \cdots  & q^{s-2}B_{1}& q^{s-2}B_{2} \\
0      & \cdots  &0      & 0        & 0          &  \cdots  &  0          & q^{s-1}B_1\\
 \end{bmatrix}, \text{ if}\ s\leq t,
\end{align}
where $\vec{V}=(B_1,\dots,B_{\min(s,t)})$, and $B_{i}$'s are matrices of the same size.
We call the block matrices of the form $\QLB(s,t,\vec{V})$ \emph{q-layered block matrices}.

The following lemma and proposition are natural generalizations of Lemma~ \ref{Jordan q-layer} and Proposition~\ref{QCmatrix}. The proofs are again by a direct computation and will also be skipped.

\begin{lem}
 \label{lem:QLB}
For $\alpha_1, \alpha_2 \in \bbC$, a matrix $B$ satisfies  $\mathbb{J}_{s,n_s}(\alpha_1)B=qB\mathbb{J}_{t,n_t}(\alpha_2)$ if and only if
\begin{eqnarray*}
B=\begin{cases}
  0                                                       & \   \text{if}\  \alpha_1\neq q\alpha_2 \\
   \QLB(s,t,\vec{V})                                        & \  \text{if}\   \alpha_1=q\alpha_2,
\end{cases}
\end{eqnarray*}
for $\vec{V}=(B_1,\dots,B_{\min(s,t)})$ with $n_s\times n_t$ matrices $B_i$ for all $i$. 
\end{lem}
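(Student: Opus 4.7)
The plan is to mimic the block-matrix computation from Lemma \ref{Jordan q-layer}, now with $n_s\times n_t$ matrix entries in place of scalars. First I would partition $B$ as an $s\times t$ block matrix $(B_{ij})$ with blocks $B_{ij}$ of size $n_s\times n_t$. Using the description \eqref{eqn: J} of $\mathbb{J}_{s,n_s}(\alpha)$ as the $s\times s$ block matrix $(\alpha\delta_{kl}+\delta_{k+1,l})I_{n_s}$ (similarly for $\mathbb{J}_{t,n_t}(\alpha_2)$), a direct block product gives
\[
(\mathbb{J}_{s,n_s}(\alpha_1)B)_{ij}=\alpha_1B_{ij}+B_{i+1,j},\qquad (B\mathbb{J}_{t,n_t}(\alpha_2))_{ij}=\alpha_2B_{ij}+B_{i,j-1},
\]
with the conventions $B_{s+1,j}:=0$ and $B_{i,0}:=0$. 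Hence $\mathbb{J}_{s,n_s}(\alpha_1)B=qB\mathbb{J}_{t,n_t}(\alpha_2)$ is equivalent to the block recurrence
\[
(\alpha_1-q\alpha_2)B_{ij}+B_{i+1,j}-qB_{i,j-1}=0,\qquad 1\le i\le s,\ 1\le j\le t.
\]

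Case $\alpha_1\neq q\alpha_2$. Setting $\beta:=\alpha_1-q\alpha_2\neq0$, I would argue $B=0$ by a double induction: at $(s,1)$ the boundary conditions give $\beta B_{s,1}=0$, so $B_{s,1}=0$; then inverse induction on $i$ from $s$ down to $1$ (at fixed $j=1$) yields $B_{i,1}=0$ for all $i$, and induction on $j$ combined with the same downward induction on $i$ forces every $B_{ij}=0$.

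Case $\alpha_1=q\alpha_2$. The recurrence reduces to $B_{i+1,j}=qB_{i,j-1}$. The boundary $i=s$ forces $B_{s,j}=0$ for $j<t$, the boundary $j=1$ forces $B_{i,1}=0$ for $i>1$, and the bulk relation $B_{i+1,j+1}=qB_{i,j}$ says that the blocks along each NW-SE diagonal are determined, up to a factor $q^k$, by a single free block. There are exactly $\min(s,t)$ such independent diagonals, which is precisely the length of the parameter tuple $\vec{V}=(B_1,\dots,B_{\min(s,t)})$. I would then match these free blocks position-by-position with the explicit form \eqref{eq:block al1} (when $s\ge t$) or \eqref{eq:block al2} (when $s\le t$), and conversely check by the same block multiplication that every $\QLB(s,t,\vec{V})$ satisfies the recurrence.

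The main obstacle is purely bookkeeping: the location of the ``free corner'' switches between lower-left and upper-right depending on whether $s\ge t$ or $s\le t$, and one needs to track carefully that the unique free block on the $k$-th diagonal is identified with $B_k$ in the conventions of \eqref{eq:block al1}--\eqref{eq:block al2}. Conceptually nothing new happens beyond the scalar case of Lemma \ref{Jordan q-layer}; the proof is a direct block analog.
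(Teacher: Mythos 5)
Your proof is correct and is exactly the ``direct computation'' that the paper declares and then omits (immediately before Lemma~\ref{lem:QLB}, the authors say the proofs of it and Proposition~\ref{QLBmatrix} ``are again by a direct computation and will also be skipped''). The block-recurrence $(\alpha_1-q\alpha_2)B_{ij}+B_{i+1,j}-qB_{i,j-1}=0$, the vanishing argument when $\alpha_1\neq q\alpha_2$, and the diagonal count giving $\min(s,t)$ free blocks all check out and match the intended argument.
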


\begin{prop}
\label{QLBmatrix}
Let $A=\diag(\mathbb{J}_{1,n_1}(\alpha_1), \mathbb{J}_{2,n_2}(\alpha_2),..., \mathbb{J}_{r,n_r}(\alpha_r))$. Then a matrix $B$ satisfies $AB=qBA$ if and only if $B=(B_{ij})$ is an $r\times r$ block matrix with its $in_i\times jn_j$-submatrix
\begin{eqnarray*}
B_{ij}=\begin{cases}
0                                                       &   \text{if}\ \alpha_i\neq q\alpha_j \\
\QLB( i,j,\vec{V}_{ij})                               &   \text{if}\    \alpha_i=q\alpha_j;
\end{cases}
\end{eqnarray*}
where $\QLB( i,j,\vec{V}_{ij})$ is an $i\times j$ block matrix (whose block size is $n_i\times n_j$) for every $i$ and $j$. 
\end{prop}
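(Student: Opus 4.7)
The plan is to mimic exactly the reduction used in the proof of Proposition~\ref{QCmatrix}, but with $J_{m_i}(\alpha_i)$ replaced by $\mathbb{J}_{i,n_i}(\alpha_i)$ and with Lemma~\ref{Jordan q-layer} replaced by its block analogue Lemma~\ref{lem:QLB}. The point is that the relation $AB=qBA$ is $\bbC$-linear and block-diagonal on the left factor, so it decouples into conditions on the individual rectangular blocks of $B$ compatible with the block decomposition of $A$.

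First I would fix the decomposition: write $B=(B_{ij})_{1\le i,j\le r}$ as an $r\times r$ block matrix where $B_{ij}$ has size $in_i\times jn_j$, matching the sizes of the diagonal blocks $\mathbb{J}_{i,n_i}(\alpha_i)$ of $A$. Since $A$ is block diagonal, the $(i,j)$-block of $AB$ is $\mathbb{J}_{i,n_i}(\alpha_i)\, B_{ij}$ and the $(i,j)$-block of $BA$ is $B_{ij}\,\mathbb{J}_{j,n_j}(\alpha_j)$. Hence the single matrix equation $AB=qBA$ is equivalent to the $r^2$ independent equations
\[
\mathbb{J}_{i,n_i}(\alpha_i)\, B_{ij} \;=\; q\, B_{ij}\, \mathbb{J}_{j,n_j}(\alpha_j), \qquad 1\le i,j\le r.
\]

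Second, I would invoke Lemma~\ref{lem:QLB} for each fixed pair $(i,j)$. That lemma states precisely that such a $B_{ij}$ must vanish when $\alpha_i\ne q\alpha_j$, and when $\alpha_i=q\alpha_j$ it must take the form $\QLB(i,j,\vec{V}_{ij})$ for some tuple $\vec{V}_{ij}=(B_1,\dots,B_{\min(i,j)})$ of $n_i\times n_j$ matrices. Substituting back into the block description of $B$ yields the claimed form, and conversely any $B$ of that form satisfies $AB=qBA$ by Lemma~\ref{lem:QLB} applied blockwise. This establishes both directions.

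There is no real obstacle here; the only thing to be careful about is bookkeeping the block sizes ($in_i\times jn_j$ for the outer blocks, with $n_i\times n_j$ for the inner entries of $\QLB(i,j,\vec{V}_{ij})$) so that the matrix products $\mathbb{J}_{i,n_i}(\alpha_i)B_{ij}$ and $B_{ij}\mathbb{J}_{j,n_j}(\alpha_j)$ are well defined and match the hypotheses of Lemma~\ref{lem:QLB}. Since the block structure was set up precisely for this compatibility, the proof is a formal consequence of Lemma~\ref{lem:QLB}, exactly parallel to how Proposition~\ref{QCmatrix} follows from Lemma~\ref{Jordan q-layer}.
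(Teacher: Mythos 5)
Your argument is correct and is exactly the intended one: the paper states that the proof is ``again by a direct computation'' and is a natural generalization of Proposition~\ref{QCmatrix}, whose proof reduces to Lemma~\ref{Jordan q-layer} blockwise in just the way you reduce to Lemma~\ref{lem:QLB}. Nothing is missing; the block-size bookkeeping you flag is the only point requiring care, and you handle it.
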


\section{$q$-commuting varieties}
\label{sec: q-commuting var}

We always assume that $\ell<\infty$ in the following except Section \ref{section:infinite}.

\subsection{Definition of $\scrD_{q,i}$ and $\scrN_{q,i}$}
\label{subsec: def DN}
Recall $G=\GL_n$. Then there is a conjugation action of $\GL_n$ on $\scrK_{q,n}$, i.e.,
$g\cdot (A,B)=(gAg^{-1},gBg^{-1})$ for any $g\in\GL_n$, $(A,B)\in \scrK_{q,n}$.

For any subset $\scrV\subseteq \scrK_{q,n}$, we denote by
$\ov{\scrV}$ its Zariski closure in $\scrK_{q,n}$. 

\begin{df}
\label{df: D}
For each $1\leq i\leq \ell$, the (constructible) subset $\scrD^\circ_{q,i}$ 
is defined to be the union of $\GL_i$-orbits of all $(A,B) \in \scrK_{q,i}$, where
$A$ is a diagonal matrix of the form
\begin{equation}\label{eq:A1}
A=\diag(a,aq^{-1},\ldots,aq^{-(i-1)})\, ,\  \text{for}\   a\in \bbC^*.
\end{equation}
The variety $\scrD_{q,i}$ is defined to be the Zariski closure of $\scrD^\circ_{q,i}$ in $\scrK_{q,i}$.
\end{df}
\begin{df}
\label{df: N}
For each $1\leq i<\ell$, the (constructible) subset $\scrN^\circ_{q,i}$ is defined to be the union of $\GL_i$-orbits  of $(J_i(0),B) \in \scrK_{q,i}$. The variety $\scrN_{q,i}$ is defined to be the Zariski closure of  $\scrN^\circ_{q,i}$ in $\scrK_{q,i}$.
\end{df}

Note that $\scrD^\circ_{q,i}$ is an open subset of $\scrD_{q,i}$ for $1\leq i\leq \ell$; and $\scrN^\circ_{q,i}$ is an open subset of $\scrN_{q,i}$ for $1\leq i<\ell$.


\begin{df}
\label{rem: calD calN}

(i) For any $1\leq i <\ell$, denote by $U_{q,i}$ the union of $\GL_i$-orbits of all $(A,B) \in \scrK_{q,i}$, where
\begin{eqnarray*}
A=\diag(a,aq^{-1},\ldots,aq^{-(i-1)})\, ,\  \text{for}\   a\in \bbC^*,
\end{eqnarray*}
and $B=\left[ \begin{array}{ccccccc} 0&b_1&0& \cdots& 0\\ &0& b_2&\cdots &0\\ &&\ddots&\ddots&\vdots& \\ &&&\ddots&b_{i-1}\\&&&&0\end{array} \right]$ for $b_j\in\bbC^*$. 

(ii) Denote by $U_{q,\ell}$ the union of $\GL_i$-orbits of all $(A,B)\in\scrK_{q,\ell}$, where $A=\diag(a,aq^{-1},\ldots,aq^{-(\ell-1)})$ for $a\in\bbC^*$, and
\begin{align}
\label{eqn: form B}
B=\begin{bmatrix}
0   & b_1    &        &              &          \\
    &  0     & b_2    &              &        \\
    &        &\ddots   &\ddots        &        \\
    &        &        &   0          & b_{\ell-1}  \\
b_\ell &        &        &              &  0
\end{bmatrix}
\end{align}
with $b_j\in\bbC^*$ for any $1\leq j\leq \ell$.

(iii) For any $1\leq i<\ell$, denote by $V_{q,i}$ the union of $\GL_i$-orbits of all $(J_{i}(0),B) \in \scrK_{q,i}$, where
$B=\L^q(i,i,(b_1,\dots,b_i))$ with $b_j\in\bbC^*$ for any $1\leq j\leq i$. 
\end{df}

\begin{lem}
We have
\begin{itemize}
\item[(i)] $\scrD_{q,i}=\ov{U_{q,i}}$ for $1\leq i\leq\ell$;
\item[(ii)] $\scrN_{q,i}=\ov{V_{q,i}}$ for $1\leq i< \ell$.
\end{itemize}
\end{lem}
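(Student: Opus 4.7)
The plan is to prove both parts by the same perturbation argument. The inclusion $\overline{U_{q,i}}\subseteq\scrD_{q,i}$ (resp. $\overline{V_{q,i}}\subseteq\scrN_{q,i}$) is immediate because $U_{q,i}\subseteq\scrD^\circ_{q,i}$ and $V_{q,i}\subseteq\scrN^\circ_{q,i}$ by construction. For the reverse inclusion, since $U_{q,i}$ and $V_{q,i}$ are $\GL_i$-invariant (and therefore so are their closures), it will suffice to exhibit, for every representative $(A,B)$ of a $\GL_i$-orbit in $\scrD^\circ_{q,i}$ (resp. $\scrN^\circ_{q,i}$), a one-parameter family through $(A,B)$ whose generic member lies in $U_{q,i}$ (resp. $V_{q,i}$).

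For (i), $\GL_i$-invariance reduces us to the case $A=\diag(a,aq^{-1},\ldots,aq^{-(i-1)})$ with $a\in\bbC^*$. Applying Proposition~\ref{QCmatrix} with all block sizes equal to one, an entry $B_{jk}$ can be nonzero only when $aq^{-(j-1)}=q\cdot aq^{-(k-1)}$, i.e.\ only when $k\equiv j+1\pmod{\ell}$; hence $B$ has exactly the support prescribed in Definition~\ref{rem: calD calN}(i) (or (ii) if $i=\ell$), though some of the free entries $b_1,b_2,\ldots$ may vanish. Let $B_0$ be the matrix with this same prescribed support but with every free entry equal to $1$. Because $A$ is fixed and the $q$-commutation condition is linear in $B$, the family $(A,B+\epsilon B_0)$ lies in $\scrK_{q,i}$ for all $\epsilon\in\bbC$, and for all but finitely many $\epsilon$ every free entry $b_j+\epsilon$ is nonzero, so $(A,B+\epsilon B_0)\in U_{q,i}$. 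Letting $\epsilon\to 0$ places $(A,B)$ in $\overline{U_{q,i}}$, proving $\scrD^\circ_{q,i}\subseteq\overline{U_{q,i}}$ and hence $\scrD_{q,i}\subseteq\overline{U_{q,i}}$.

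Part (ii) proceeds identically. For $(J_i(0),B)\in\scrN^\circ_{q,i}$, Lemma~\ref{Jordan q-layer} applied with $\alpha_1=\alpha_2=0$ forces $B=\L^q(i,i,(b_1,\ldots,b_i))$. Setting $B_0:=\L^q(i,i,(1,\ldots,1))$ and using the linearity of the map $\vec{v}\mapsto\L^q(i,i,\vec v)$ in its vector argument, one has $B+\epsilon B_0=\L^q(i,i,(b_1+\epsilon,\ldots,b_i+\epsilon))$, which still $q$-commutes with $J_i(0)$, and the perturbed pair lies in $V_{q,i}$ for all but finitely many $\epsilon$. Sending $\epsilon\to 0$ yields $(J_i(0),B)\in\overline{V_{q,i}}$, so $\scrN_{q,i}=\overline{\scrN^\circ_{q,i}}\subseteq\overline{V_{q,i}}$.

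No real obstacle is anticipated: once Proposition~\ref{QCmatrix} and Lemma~\ref{Jordan q-layer} pin down the shape of $B$, the argument is a textbook deformation using the linearity of the $q$-commutation relation in $B$ together with the fact that a nonzero polynomial in $\epsilon$ has finitely many roots. The only bookkeeping point is the case $i=\ell$ in (i), where the wrap-around entry $B_{\ell,1}$ must be included in the support of $B_0$ so that $B+\epsilon B_0$ still $q$-commutes with $A$; this is automatic once one reads off the support from Proposition~\ref{QCmatrix}.
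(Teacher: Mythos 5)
Your proof is correct and follows essentially the same route as the paper: both use Proposition~\ref{QCmatrix} (resp.\ Lemma~\ref{Jordan q-layer}) to pin down the shape of $B$ once $A$ is put in normal form, and then conclude that $U_{q,i}$ (resp.\ $V_{q,i}$) is dense in $\scrD^\circ_{q,i}$ (resp.\ $\scrN^\circ_{q,i}$). The paper simply asserts the density, whereas you make it explicit with the linear deformation $B\mapsto B+\epsilon B_0$, and you also handle the $i=\ell$ wrap-around case (using $q^\ell=1$) that the paper leaves as ``similar''; these are minor but welcome elaborations of the same argument.
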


\begin{proof}
We only prove (i) for the case $i<\ell$. For any $(A,B)\in\scrD_{q,i}^\circ$, by definition, there exists a $g\in\GL_i$ such that $gAg^{-1}=\diag(a,aq^{-1},\ldots,aq^{-(\ell-1)})$ for some $a\in K^*$. Proposition \ref{QCmatrix} shows that $gBg^{-1}=\left[ \begin{array}{ccccccc} 0&b_1&0& \cdots& 0\\ &0& b_2&\cdots &0\\ &&\ddots&\ddots&\vdots& \\ &&&\ddots&b_{i-1}\\&&&&0\end{array} \right]$ with $b_j\in\bbC$.
It follows that $U_{q,i}$ is a dense open subset of $\scrD_{q,i}^\circ$. So $\scrD_{q,i}=\ov{U_{q,i}}$ by definition.
\end{proof}

\subsection{Quantum plane}
Let $\bbA_q^2$ be the quantum plane. 
 Following \cite{CB-S}, for any $n\geq1$, the module variety $\mod^n(\bbA_q^2)$ is the set of $\bbA_q^2$-module structures on $\bbC^n$, or equivalently the set of $\bbC$-algebra homomorphism from $\bbA_q^2$ to $M_{n\times n}(\bbC)$. Now such a homomorphism is determined by the values on $x,y$. 
Throughout this paper, we always identify $\mod^n(\bbA_q^2)$ with $\scrK_{q,n}$. In this way, an $n$-dimensional $\bbA_q^2$-module is denoted by $M(A,B)$ (or just $(A,B)$ if there is no confusions)
for some $(A,B)\in \scrK_{q,n}$.


Inspired by the definition of direct sum of modules, we define
\begin{align}
&(A_1,B_1)\oplus(A_2,B_2)\\\notag
:=&(\diag (A_1,A_2), \diag (B_1,B_2))\in\scrK_{q,i+j},\ \forall (A_1,B_1)\in \scrK_{q,i},(A_2,B_2)\in \scrK_{q,j}.
\end{align}

Similarly, we can define the Hom-space and Ext-space for any $(A_1,B_1)$, $(A_2,B_2)$ in $\scrK_{q,\infty}$, which are denoted by
$$\Hom_{\bbA_q^2}((A_1,B_1),(A_2,B_2)), \text{ and }\Ext^1_{\bbA_q^2}((A_1,B_1),(A_2,B_2))$$
respectively.

Let $\scrV\subseteq \scrK_{q,i},\scrW\subseteq \scrK_{q,j}$ be two subsets. We give the following notations.
\begin{align}
\scrV\oplus \scrW:=&\{(A_1,B_1)\oplus(A_2,B_2) \in \scrK_{q,i+j}\mid (A_1,B_1)\in \scrV,(A_2,B_2)\in \scrW\};\\
{\hom}_{\bbA_q^2}(\scrV,\scrW):=&\min\{\dim_\bbC \Hom_{\bbA_q^2}((A_1,B_1),(A_2,B_2))\mid (A_1,B_1)\in \scrV,(A_2,B_2)\in\scrW\};\\
\ext^1_{\bbA_q^2}(\scrV,\scrW):=&\min\{\dim_\bbC \Ext^1_{\bbA_q^2}((A_1,B_1),(A_2,B_2))\mid (A_1,B_1)\in \scrV,(A_2,B_2)\in\scrW\}.
\end{align}

\begin{df}[$q$-equivalent]
\label{def: l relation}
Two nonzero numbers $a_1$ and $a_2$ are said to be $q$-equivalent if
$a_1=a_2q^m$ for some $m\in\bbZ$.
A multiset of numbers $a_1,\ldots,a_s$ is not $q$-equivalent if $a_i$ and $a_j$ are not $q$-equivalent for any $i\neq j$.
\end{df}
Here a {\em multiset} is a generalization of the concept of a set that, unlike a set, allows multiple instances for each of its elements.

\begin{lem}
\label{lem:extension0}
For any $\scrV,\scrW\in \{ \scrD_{q,1}\dots,\scrD_{q,\ell-1}, \scrD_{q,\ell},\scrN_{q,1} ,\dots, \scrN_{q,\ell-1} \}$,
we have
\begin{align*}
&\hom_{\bbA_q^2}(\scrV,\scrW)=0;\\
&\ext^1_{\bbA_q^2}(\scrV,\scrW)=0.
\end{align*}
\end{lem}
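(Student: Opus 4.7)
The plan is to exploit the fact that $x^\ell,y^\ell\in\bbA_q^2$ are central when $q^\ell=1$: indeed $x\cdot y^\ell=q^\ell y^\ell\cdot x=y^\ell\cdot x$, and symmetrically $y\cdot x^\ell=x^\ell\cdot y$. On a module $M(A,B)$ these central elements act as $A^\ell$ and $B^\ell$, and my claim is that on sufficiently generic representatives of each variety in the list they act as scalars which can then be arranged to disagree between $M_1\in\scrV$ and $M_2\in\scrW$. Combined with the standard block-separation principle for central characters, this will force both $\Hom$ and $\Ext^1$ to vanish.

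The first step is to compute the central action on the dense open pieces introduced in Definition~\ref{rem: calD calN}. For $(A,B)\in U_{q,i}$ with parameter $a\in\bbC^*$ we have $A^\ell=a^\ell\cdot I$; when $i<\ell$ the matrix $B$ is a strict upper shift and hence $B^\ell=0$, while for $i=\ell$ a direct computation on the cyclic matrix \eqref{eqn: form B} gives $B^\ell=(b_1\cdots b_\ell)\cdot I$. For $(A,B)\in V_{q,j}$ with $j<\ell$ we have $A=J_j(0)$, so $A^\ell=0$; and $B=\L^q(j,j,\vec{b})$ is upper triangular with pairwise distinct diagonal entries $b_1,qb_1,\ldots,q^{j-1}b_1$ (since $j<\ell$), so $B$ is diagonalizable with $B^\ell=b_1^\ell\cdot I$.

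The second step is a short case analysis. In every choice of $\scrV,\scrW$ from the list I pick parameters so that at least one of the two pairs of central scalars separates $M_1$ and $M_2$: for $\scrD_{q,i}\times\scrD_{q,j}$ (with either index possibly equal to $\ell$) take $a_1^\ell\neq a_2^\ell$; for $\scrN_{q,i}\times\scrN_{q,j}$ take $b_1^\ell\neq c_1^\ell$; and for a mixed pair with one factor in the $\scrD$-family and the other in the $\scrN$-family the values of $A^\ell$ already separate ($a^\ell\neq 0$ versus $0$). Each separating condition is Zariski open and nonempty in the corresponding product of parameter spaces, so such a pair $(M_1,M_2)\in \scrV\times\scrW$ exists.

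Finally, if $z\in\bbA_q^2$ is central and acts on $M_i$ as the scalar $\lambda_i$ with $\lambda_1\neq\lambda_2$, then any $f\in\Hom_{\bbA_q^2}(M_1,M_2)$ satisfies $(\lambda_2-\lambda_1)f=0$, forcing $f=0$; and for any extension $0\to M_2\to E\to M_1\to 0$ the operator $z-\lambda_1$ acts as zero on $M_1$ and invertibly on $M_2$, so the $\bbA_q^2$-submodule $\Ker(z-\lambda_1)\subseteq E$ is a direct complement to $M_2$, which splits the sequence. Hence $\Hom_{\bbA_q^2}(M_1,M_2)=0$ and $\Ext^1_{\bbA_q^2}(M_1,M_2)=0$ simultaneously on the chosen pair, and since $\hom_{\bbA_q^2}(\scrV,\scrW)$ and $\ext^1_{\bbA_q^2}(\scrV,\scrW)$ are minima of nonnegative integers, both equal zero. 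The main obstacle is the bookkeeping of the central-character case analysis in the second step; the computations of $A^\ell$ and $B^\ell$ on the open pieces and the block-separation argument itself are direct.
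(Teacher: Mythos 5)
Your proof is correct and takes a genuinely different route from the paper's. The paper argues by direct matrix computation: for each case it fixes representatives $M_1=(A_1,B_1)$, $M_2=(A_2,B_2)$ with eigenvalues that are not $q$-equivalent, checks by hand that any $F$ with $FA_1=A_2F$ and $FB_1=B_2F$ is zero, and then shows any extension splits by analyzing the Jordan form of the middle term via Proposition~\ref{QCmatrix}. You instead observe that $x^\ell$ and $y^\ell$ are central in $\bbA_q^2$ (since $q^\ell=1$), compute that they act as scalars $a^\ell$ and either $0$, $b_1\cdots b_\ell$, or $b_1^\ell$ on the standard representatives of each $\scrD_{q,i}$ and $\scrN_{q,j}$, and then invoke the central-character separation principle to kill both $\Hom$ and $\Ext^1$ in one stroke. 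The underlying separating condition is the same -- ``$a$ and $c$ not $q$-equivalent'' is equivalent to $a^\ell\neq c^\ell$ -- but encapsulating it through a central element makes the $\Ext^1$-vanishing essentially automatic: the minimal polynomial of $z=x^\ell$ or $y^\ell$ on any extension $E$ divides $(t-\lambda_1)(t-\lambda_2)$, so $z$ is semisimple on $E$, $E$ decomposes as $E_{\lambda_1}\oplus E_{\lambda_2}$ as an $\bbA_q^2$-module, and $E_{\lambda_2}=M_2$ gives the splitting. This is cleaner than the paper's Jordan-form case analysis and uses only the definitions of $\hom$ and $\ext^1$ as minima, so exhibiting a single separated pair suffices. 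Both arguments are limited to $\ell<\infty$, which matches the standing assumption in this section.

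One small point worth making explicit if you write this up: your claim that $\Ker(z-\lambda_1)$ is a complement to $M_2$ in $E$ follows because $(z-\lambda_1)(z-\lambda_2)$ annihilates $E$ (it sends $E$ into $M_2$ and then to zero), so $z$ is diagonalizable on $E$ with $E=\Ker(z-\lambda_1)\oplus\Ker(z-\lambda_2)$, and $\Ker(z-\lambda_2)=M_2$ by the dimension count together with $M_2\subseteq\Ker(z-\lambda_2)$. As stated, the sentence ``acts as zero on $M_1$ and invertibly on $M_2$, so $\Ker(z-\lambda_1)$ is a direct complement'' needs this extra half-step to be airtight.
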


\begin{proof}
The proof is divided into the following four cases.

Case (i): \underline{$\scrV=\scrD_{q,i},\scrW=\scrD_{q,j}$}. By definition, we can choose $L=(A_1,B_1)\in \scrD_{q,i}$, and $N=(A_2,B_2)\in\scrD_{q,j}$ such that
\begin{itemize}
\item $A_1=\diag(a,aq^{-1},\ldots,aq^{-(i-1)})$, and $A_2=\diag(c,cq^{-1},\ldots,cq^{-(j-1)})$,
\item $a,c\in\bbC^*$ are not $q$-equivalent.
\end{itemize}
For any morphism in $\Hom_{\bbA_q^2}(N,L)$, it is represented by a $j\times i$ matrix $F$ such $FA_1=A_2F$, $FB_1=B_2F$.
It is routine to check that $F=0$. So $\Hom_{\bbA_q^2}(N,L)=0$, which shows that $\hom_{\bbA_q^2}(\scrD_{q,i},\scrD_{q,j})=0$.

On the other hand, for any short exact sequence
\begin{equation}\label{eq:LMN}
0\rightarrow N\rightarrow M\rightarrow L\rightarrow0,
\end{equation}
$M$ is isomorphic to $(A,B)$ with $A$ an upper triangular matrix with its diagonal
$$(a,aq^{-1},\ldots,aq^{-(i-1)}, c,cq^{-1},\ldots,cq^{-(j-1)}).$$
As $a,c$ are not $q$-equivalent, $A$ is similar to
$$\diag(a,aq^{-1},\ldots,aq^{-(i-1)}, c,cq^{-1},\ldots,cq^{-(j-1)}).$$
Since $B$ satisfies that $AB=qBA$, $(A,B)$ is isomorphic to $(A_1,B_1)\oplus(A_2,B_2)$ by Lemma \ref{Jordan q-layer}. Then the sequence in (\ref{eq:LMN}) is split, which shows that
$\Ext^1_{\bbA_q^2}(N,L)=0$. Hence $\ext^1_{\bbA_q^2}(\scrD_{q,i},\scrD_{q,j})=0$.

Case (ii): \underline{$\scrV=\scrN_{q,i},\scrN=\scrD_{q,j}$}.
We can choose
$L=(A_1,B_1)\in \scrN_{q,i}$, and $N=(A_2,B_2)\in\scrN_{q,j}$ such that
$A_1=J_i(0),A_2=J_j(0)$, $B_1=\diag(b_1,qb_1,\dots,q^{i-1}b_1)$ and $B_2=\diag(b_2,qb_2,\dots,q^{j-1}b_2)$ where $b_1,b_2\in\bbC^*$ are not $q$-equivalent.
Then one can check that $\Hom_{\bbA_q^2}(L,N)=0=\Ext^1_{\bbA_q^2}(N,L)$ similar to Case (i), and so $\hom_{\bbA_q^2}(\scrN_{q,i},\scrN_{q,j})=0=\ext^1_{\bbA_q^2}(\scrN_{q,i},\scrN_{q,j})$.

Case (iii): \underline{$\scrV=\scrD_{q,i},\scrW=\scrN_{q,j}$}. We can
choose $L=(A_1,B_1)\in \scrD_{q,i}$, and $N=(A_2,B_2)\in\scrN_{q,j}$ such that
$A_1=\diag(a,aq^{-1},\ldots,aq^{-(i-1)})$ with $a\neq 0$ and $A_2=J_j(0)$.
Similar to Case (i), one can show that $\Hom_{\bbA_q^2}(L,N)=0$, and then $\hom_{\bbA_q^2}(\scrD_{q,i},\scrN_{q,j})=0$.

On the other hand, for any short exact sequence
\begin{equation}
\label{eq:LMN2}
0\rightarrow N\rightarrow M\rightarrow L\rightarrow0,
\end{equation}
$M$ is isomorphic to $(A,B)$ with $A$ an upper triangular matrix. By our assumption, $A$ is similar to
$$\diag( a,aq^{-1},\ldots,aq^{-(i-1)},J_j(0)).$$
Since $B$ satisfies that $AB=qBA$, it follows from Proposition \ref{QCmatrix} that $(A,B)$ is isomorphic to $(A_1,B_1)\oplus(A_2,B_2)$, and then the sequence in (\ref{eq:LMN2}) is split. So
$\Ext^1_{\bbA_q^2}(N,L)=0$. Hence $\ext^1_{\bbA_q^2}(\scrD_{q,i},\scrN_{q,j})=0$.

Case (iv): $\underline{\scrV=\scrN_{q,i},\scrW=\scrD_{q,j}}$. It is similar to Case (iii).

The lemma is proved.
\end{proof}

The following lemma shall be used to give the irreducible components and their dimensions for $\scrK_{q,n}$.

\begin{lem}[\cite{CB-S}]
\label{lem:C-BS}
If $\calC_i\subseteq\scrK_{q,n_i}$ are irreducible components ($1\leq i\leq t$), and $n=\sum_{i=1}^tn_i$, then $\calC=\overline{\calC_1\oplus \cdots \calC_t}$ is an irreducible component of $\scrK_{q,n}$ if and only if $\ext^1_{\bbA_q^2}(\calC_i,\calC_j)=0$ for all $i\neq j$. In this case,
\begin{equation*}
\dim \calC=\sum_{i=1}^t\dim \calC_i+\sum_{i\neq j} (n_in_j-\hom_{\bbA_q^2}(\calC_i,\calC_j)).
\end{equation*}
\end{lem}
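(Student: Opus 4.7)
Plan: This lemma specializes the Crawley--Boevey--Schr\"oer framework for module varieties to the algebra $\bbA_q^2$; the proof proceeds in three steps.

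For irreducibility, note that the product $\calC_1\times\cdots\times\calC_t$ is irreducible as a product of irreducibles, and the orbit morphism $\sigma\colon\GL_n\times(\calC_1\times\cdots\times\calC_t)\to\scrK_{q,n}$, $(g,M_1,\ldots,M_t)\mapsto g\cdot(M_1\oplus\cdots\oplus M_t)$, has irreducible source; hence the closure of its image, which is $\calC=\overline{\calC_1\oplus\cdots\oplus\calC_t}$, is irreducible.

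For the dimension formula, apply the fiber-dimension theorem to $\sigma$. For a generic $M_0$ in the image, $\sigma^{-1}(M_0)$ projects injectively to the $\GL_n$-factor, since the summands $M_i$ are recovered as the block decomposition of $g^{-1}\cdot M_0$; the image is $\Sigma(M_0)=\{g\in\GL_n:g^{-1}\cdot M_0\in\calC_1\oplus\cdots\oplus\calC_t\}$. By Krull--Schmidt, for a generic $M_0\cong M_1^0\oplus\cdots\oplus M_t^0$, the set $\Sigma(M_0)$ is a torsor for $\prod_i\GL_{n_i}$ over the space of isotype-preserving direct-sum decompositions of $M_0$, which is the homogeneous space $\operatorname{Aut}(M_0)/\prod_i\operatorname{Aut}(M_i^0)$ of dimension $\sum_{i\ne j}\hom(\calC_i,\calC_j)$. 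Hence $\dim\Sigma(M_0)=\sum_i n_i^2+\sum_{i\ne j}\hom(\calC_i,\calC_j)$, and subtracting from $\dim(\text{source})=n^2+\sum_i\dim\calC_i$, using $n^2-\sum_i n_i^2=\sum_{i\ne j}n_in_j$, yields $\dim\calC=\sum_i\dim\calC_i+\sum_{i\ne j}(n_in_j-\hom(\calC_i,\calC_j))$.

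For the component criterion, ``only if'': if $\ext^1(\calC_i,\calC_j)\ne 0$ for some $i\ne j$, pick generic $M_i\in\calC_i$, $M_j\in\calC_j$ and a nonsplit extension $0\to M_j\to E\to M_i\to 0$; the $\GL_{n_i+n_j}$-orbit closure of $E$ inside $\scrK_{q,n_i+n_j}$ is irreducible and strictly contains $\overline{\calC_i\oplus\calC_j}$, which, combined with the remaining $\calC_k$, yields an irreducible closed subset of $\scrK_{q,n}$ properly containing $\calC$. For ``if'', apply Voigt's identity $\dim T_M\scrK_{q,n}=n^2-\hom(M,M)+\ext^1(M,M)$ at a generic $M=\bigoplus_i M_i\in\calC$; the hypothesis reduces $\ext^1(M,M)$ to $\sum_i\ext^1(\calC_i,\calC_i)$, and combined with the analogous tangent identities for each component $\calC_i\subseteq\scrK_{q,n_i}$ (whose local dimension at a generic point equals $\dim\calC_i$), one shows that any irreducible closed $\mathcal{D}\supseteq\calC$ satisfies $\dim\mathcal{D}\le\dim\calC$, forcing $\mathcal{D}=\calC$.

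The main obstacle is the ``if'' implication in the third step: it requires a careful tangent-space decomposition at a generic $M\in\calC$, separating the $\GL_n$-orbit directions (of dimension $n^2-\hom(M,M)$), the block-diagonal deformation directions within $\calC_1\oplus\cdots\oplus\calC_t$ (contributing $\sum_i(\dim\calC_i-\dim(\GL_{n_i}\cdot M_i))$), and the off-diagonal $\Ext^1$ directions (which vanish by hypothesis), then matching the resulting bound against the dimension formula of the second step to eliminate room for any larger irreducible closed set.
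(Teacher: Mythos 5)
The paper does not prove this statement at all; it is quoted verbatim from Crawley--Boevey and Schr\"oer \cite{CB-S} (their Theorem~1.2), so there is no ``paper's own proof'' to compare against. Evaluating your argument on its own terms, the skeleton (irreducibility of $\overline{\calC_1\oplus\cdots\oplus\calC_t}$, a fiber-dimension count for $\sigma$, and an extension/tangent-space dichotomy for the component criterion) is the right shape, but both directions of the biconditional contain genuine gaps.

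The ``only if'' step is wrong as written. You assert that the $\GL_{n_i+n_j}$-orbit closure of a single nonsplit extension $E$ strictly contains $\overline{\calC_i\oplus\calC_j}$. This cannot hold in general: $\overline{\GL_{n_i+n_j}\cdot E}$ is a single orbit closure of dimension $(n_i+n_j)^2-\dim\operatorname{End}_{\bbA_q^2}(E)$, and every point of it is a degeneration of the fixed module $E$ (in particular has the same characteristic polynomials for the generating matrices). By contrast $\overline{\calC_i\oplus\calC_j}$ contains pairs $M_i'\oplus M_j'$ for all $M_i'\in\calC_i$, $M_j'\in\calC_j$, whose invariants vary freely; it generically has dimension far larger than $(n_i+n_j)^2$. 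Concretely, for the free algebra $K\langle x,y\rangle$ with $\calC_1=\calC_2=\mod^1$, the set $\overline{\calC_1\oplus\calC_2}$ is the $4$-dimensional space of pairs of diagonal $2\times2$ matrices, while $\overline{\GL_2\cdot E}$ has dimension at most $3$ and cannot contain, say, $(\mathrm{diag}(5,7),\mathrm{diag}(8,9))$. What one needs instead is the irreducible constructible set of all block upper-triangular module structures with diagonal blocks running over $\calC_j\times\calC_i$ (a bundle over $\calC_i\times\calC_j$ with fiber the space of cocycles), saturated by $\GL_{n_i+n_j}$; proving that its closure strictly contains $\overline{\calC_i\oplus\calC_j}$ when $\ext^1\neq0$ is the actual content, and it is a dimension count, not an orbit-closure observation.

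The ``if'' step also has a gap. Your Voigt-identity computation reduces the claim $\dim T_M\scrK_{q,n}\leq\dim\calC$ (for generic $M=\bigoplus M_i\in\calC$) to the equalities $\dim T_{M_i}\scrK_{q,n_i}=\dim\calC_i$ for generic $M_i\in\calC_i$, i.e.\ to generic smoothness of the module scheme along each $\calC_i$. You do not justify this, and it is not automatic for an arbitrary finitely generated algebra and arbitrary irreducible components (only the inequality $\dim T_{M_i}\geq\dim\calC_i$ is free). Without it, the bound you obtain on $\dim\mathcal D$ can strictly exceed $\dim\calC$ and the argument does not close. Crawley--Boevey and Schr\"oer avoid this entirely; their proof compares the dimension of the extension variety with that of the direct-sum locus, never invoking Zariski tangent spaces of the representation scheme.

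A minor point in step 2: identifying $\overline{\mathrm{Im}(\sigma)}$ with $\calC$ requires knowing that $\calC$ is $\GL_n$-stable, which indeed holds once $\calC$ is a component (the connected group $\GL_n$ preserves components), but not in general; it is worth stating that the dimension formula is being proved conditionally on the component hypothesis, consistent with the ``In this case'' phrasing of the lemma.
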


\subsection{Definition of $\scrK_{q,(\bfm,\bfr)}$}

For any vector $\bfv=(v_1,v_2,\ldots,v_{s})\in \bbZ^{s}_{\geq0}$, we define
\begin{align*}
\|\bfv\|:=&v_1+2v_2+\ldots+sv_{s},\,\,\,|\bfv|:=v_1+v_2+\ldots+v_{s}.
\end{align*}
Introduce the indexing set
\begin{align}
\label{ML}
{\ML_{q,n}}:=
\{(\bfm,\bfr) \mid\bfm=(m_1,\ldots,m_{\ell})\in\bbN^{\ell},
\bfr=(r_1,\ldots,r_{\ell-1})\in\bbN^{\ell-1},\,\, \|\bfm\|+\|\bfr\|=n\}.
\end{align}

{Note that when $\ell=1$, $\ML_{q,n}=\{\big((n),\vec{0}\big)\}$, which has only one element.}

\begin{df}
\label{def:Kqmr}
Let $(\bfm,\bfr)\in \ML_{q,n}$. The variety $\scrK_{q,(\bfm,\bfr)}$ is defined to be
\begin{align}
\label{eqn: def of irr}
\scrK_{q,(\bfm,\bfr)}:= \ov{\scrD_{q,1}^{\oplus m_1}\oplus \cdots\oplus\scrD_{q,\ell}^{\oplus m_{\ell}}
\oplus\scrN_{q,1}^{\oplus r_1}\oplus \cdots\oplus\scrN_{q,\ell-1}^{\oplus r_{\ell-1}}}.
\end{align}
\end{df}

We have
\begin{align*}
\scrK_{q,(\bfm,\bfr)}=&\ov{(\scrD^\circ_{q,1})^{\oplus m_1}\oplus \cdots\oplus(\scrD^\circ_{q,\ell})^{\oplus m_{\ell}}
\oplus(\scrN^\circ_{q,1})^{\oplus r_1}\oplus \cdots\oplus(\scrN^\circ_{q,\ell-1})^{\oplus r_{\ell-1}}}\\
=&\ov{U_{q,1}^{\oplus m_1}\oplus \cdots\oplus U_{q,\ell}^{\oplus m_{\ell}}
\oplus V_{q,1}^{\oplus r_1}\oplus \cdots\oplus V_{q,\ell-1}^{\oplus r_{\ell-1}}}.
\end{align*}

By definition, we obtain the following remark.
\begin{rem}
\label{remark oplus}
For any $(A_1,B_1)\in \scrK_{q,(\bfm_1,\bfr_1)}$, and $(A_2,B_2)\in \scrK_{q,(\bfm_2,\bfr_2)}$, we have
$$(A_1\oplus A_2,B_1\oplus B_2)\in \scrK_{q,(\bfm_1+\bfm_2,\bfr_1+\bfr_2)}.$$
\end{rem}

Inspired by the isomorphism $\theta: \scrK_{q,n}\longrightarrow \scrK_{q^{-1},n}$, we define a bijection (also denoted by $\theta$):
\begin{align}
\theta:\ML_{q,n}\longrightarrow \ML_{q^{-1},n},
\end{align}
by mapping
\begin{align}
\notag&((m_1,\dots,m_\ell),(r_1,\dots,r_{\ell-1}))\mapsto ((r_1,\dots,r_{\ell-1}, m_\ell),(m_1,\dots,m_{\ell-1})).
\end{align}

\begin{lem}\label{lem:switch}
For any $(\bfm,\bfr) \in \ML_{q,n}$, the isomorphism $\theta:\scrK_{q,n}\rightarrow \scrK_{q^{-1},n}$ induces an isomorphism of varieties $\scrK_{q,(\bfm,\bfr)}\rightarrow
\scrK_{q^{-1},\theta(\bfm,\bfr)}$.
\end{lem}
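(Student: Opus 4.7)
The plan is to reduce the lemma to a computation of $\theta$ on the irreducible building blocks $\scrD_{q,i}$ and $\scrN_{q,j}$, then reassemble using the compatibility of $\theta$ with direct sum and Zariski closure.

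Two elementary observations set the stage. First, $\theta$ commutes with $\oplus$: both sides of $\theta((A_1,B_1)\oplus(A_2,B_2))=\theta(A_1,B_1)\oplus\theta(A_2,B_2)$ equal $(\diag(B_1,B_2),\diag(A_1,A_2))$. Second, $\theta$ is an isomorphism of varieties, so it carries Zariski closures to Zariski closures. In view of Definition~\ref{def:Kqmr}, the lemma will follow once we establish $\theta(\scrD_{q,i})\subseteq\scrN_{q^{-1},i}$ and $\theta(\scrN_{q,j})\subseteq\scrD_{q^{-1},j}$ for $1\le i,j\le \ell-1$, together with $\theta(\scrD_{q,\ell})\subseteq\scrD_{q^{-1},\ell}$; the reverse inclusions come from applying the same arguments to $\theta^{-1}$, which is the analogous swap running from $\scrK_{q^{-1},n}$ back to $\scrK_{q,n}$.

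For the first two inclusions, I would test on the dense open subsets $U_{q,i}$ and $V_{q,j}$. For $(A,B)\in U_{q,i}$ with $i<\ell$, the matrix $B$ has zero diagonal and nonzero superdiagonal entries, hence is $\GL_i$-conjugate to $J_i(0)$; after a further conjugation one may assume $B=J_i(0)$, so $\theta(A,B)=(J_i(0),A)$ lies in $\scrN^\circ_{q^{-1},i}$. The case of $V_{q,j}$ is symmetric: the upper-triangular $q$-layered matrix $B=\L^q(j,j,(b_1,\ldots,b_j))$ with all $b_k$ nonzero has the pairwise distinct eigenvalues $b_1, b_1 q,\ldots, b_1 q^{j-1}$ (using $j<\ell$), so it diagonalizes into the pattern defining $\scrD^\circ_{q^{-1},j}$, while the original $J_j(0)$ in the first slot becomes, by Proposition~\ref{QCmatrix}, a cyclic matrix of the correct shape.

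The main obstacle is the case $i=\ell$. For $(A,B)\in U_{q,\ell}$ the cyclic matrix $B$ of \eqref{eqn: form B} has characteristic polynomial $\lambda^\ell-b_1 b_2\cdots b_\ell$, so its eigenvalues are the $\ell$-th roots of $b_1\cdots b_\ell$. Since $q$ is a primitive $\ell$-th root of unity, these form a set of the shape $\{c, cq, cq^2,\ldots, cq^{\ell-1}\}$ for some $c\in\bbC^*$, which matches exactly the diagonal pattern required by $\scrD^\circ_{q^{-1},\ell}$ (where $q$ is replaced by $q^{-1}$). After a $\GL_\ell$-conjugation bringing $B$ into this diagonal form, Proposition~\ref{QCmatrix} forces the conjugated $A$ to be a cyclic matrix of the form \eqref{eqn: form B}, with all entries nonzero since $A$ is invertible. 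Hence $\theta(U_{q,\ell})\subseteq\scrD^\circ_{q^{-1},\ell}$, and taking closures yields the remaining inclusion. Combining the three cases and tracking how the exponents $m_i,r_j$ are permuted recovers exactly the formula defining $\theta(\bfm,\bfr)$, giving $\theta(\scrK_{q,(\bfm,\bfr)})=\scrK_{q^{-1},\theta(\bfm,\bfr)}$.
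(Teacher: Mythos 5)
Your proposal is correct and follows essentially the same route as the paper: check $\theta(U_{q,i})\subseteq\scrN^\circ_{q^{-1},i}$, $\theta(V_{q,j})\subseteq\scrD^\circ_{q^{-1},j}$, $\theta(U_{q,\ell})\subseteq\scrD^\circ_{q^{-1},\ell}$, propagate through $\oplus$ and closures, and invoke $\theta^{-1}$ for the reverse inclusion. You merely spell out the ``one can check'' step (diagonalizing the cyclic $B$ via its characteristic polynomial $\lambda^\ell-b_1\cdots b_\ell$) that the paper leaves implicit.
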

\begin{proof}

By Definition \ref{rem: calD calN}, one can check that $\theta(\calD_{q,i})\subseteq\calN_{q^{-1},i}$, $\theta(\calN_{q,i})\subseteq \calD_{q^{-1},i}$ for any $1\leq i\leq \ell-1$, and $\theta(\calD_{q,\ell})\subseteq \calD_{q^{-1},\ell}$.
In general, for any $(\bfm,\bfr) \in \ML_{q,n}$,
\begin{eqnarray*}
&&\theta(\calD_{q,1}^{\oplus m_1}\oplus \cdots\oplus\calD_{q,\ell}^{\oplus m_{\ell}}
\oplus\calN_{q,1}^{\oplus r_1}\oplus \cdots\oplus\calN_{q,\ell-1}^{\oplus r_{\ell-1}})\\
&\subseteq& \calN_{q^{-1},1}^{\oplus m_1}\oplus \cdots\oplus\calN_{q^{-1},\ell-1}^{\oplus m_{\ell-1}} \oplus \calD_{q^{-1},\ell}^{\oplus m_\ell}
\oplus\calD_{q^{-1},1}^{\oplus r_1}\oplus \cdots\oplus\calD_{q^{-1},\ell-1}^{\oplus r_{\ell-1}}.
\end{eqnarray*}
Then
\begin{eqnarray*}
\theta(\scrK_{q,(\bfm,\bfr)})&=&\theta(\ov{\calD_{q,1}^{\oplus m_1}\oplus \cdots\oplus\calD_{q,\ell}^{\oplus m_{\ell}}
\oplus\calN_{q,1}^{\oplus r_1}\oplus \cdots\oplus\calN_{q,\ell-1}^{\oplus r_{\ell-1}}})\\
&\subseteq& \ov{\calN_{q^{-1},1}^{\oplus m_1}\oplus \cdots\oplus\calN_{q^{-1},\ell-1}^{\oplus m_{\ell-1}} \oplus \calD_{q^{-1},\ell}^{\oplus m_\ell}
\oplus\calD_{q^{-1},1}^{\oplus r_1}\oplus \cdots\oplus\calD_{q^{-1},\ell-1}^{\oplus r_{\ell-1}}}\\
&=& \scrK_{q^{-1},\theta(\bfm,\bfr)}.
\end{eqnarray*}

On the other hand, one can construct an isomorphism $\theta': \scrK_{q^{-1},n}\rightarrow \scrK_{q,n}$ by mapping $(A,B)$ to $(B,A)$. Note that $\theta'$ is the inverse of $\theta$. Similarly, we have $\theta'( \scrK_{q^{-1},\theta(\bfm,\bfr)})\subseteq \scrK_{q,(\bfm,\bfr)}$, which yields that $\theta(\scrK_{q,(\bfm,\bfr)})= \scrK_{q^{-1},\theta(\bfm,\bfr)}$. So $\theta:\scrK_{q,(\bfm,\bfr)})\stackrel{\sim}\longrightarrow\scrK_{q^{-1},\theta(\bfm,\bfr)}$.
\end{proof}


\section{$\scrK_{q,(\bfm,\bfr)}$ are irreducible components}
\label{subsec: irreducible}
In this section, we shall prove that the variety $\scrK_{q,(\bfm,\bfr)}$ introduced in Definition~\ref{def:Kqmr}
is an irreducible component of $\scrK_{q,n}$.

\subsection{The irreducibility of $\scrD_{q,i}$ and $\scrN_{q,j}$}

We construct several maps to verify $\scrD_{q,i}$ and $\scrN_{q,j}$ are irreducible varieties. 


For $1\leq i \leq \ell-1$, define a map
\begin{align}
\label{eqn:phii}
\phi_{q,i}: \GL_i\times \bbC \times \bbC^{i-1} \longrightarrow \scrD_{q,i}
\end{align}
by setting
\[
\phi_{q,i} \big( g, a, (b_1,b_2,\ldots, b_{i-1} \big) =(gAg^{-1},gBg^{-1}),
\]
where $A=\diag(a,aq^{-1},\ldots,aq^{-(i-1)})$, and
\begin{align*}
B=\begin{bmatrix}
0 & b_1    &        &              &          \\
  &  0     & b_2    &              &        \\
  &        &\ddots  &\ddots        &        \\
  &        &        &   0          & b_{i-1}  \\
  &        &        &              &  0
\end{bmatrix}.
\end{align*}

For $i=\ell$, define a map
\begin{align}
\phi_{q,\ell}: \GL_\ell(K)\times \bbC \times \bbC^{\ell} \longrightarrow \scrD_{q,\ell}
\end{align}
by setting
\[
\phi_{q,\ell} \big( g, a, (b_1,b_2,\ldots, b_{\ell} \big) =(gAg^{-1},gBg^{-1}),
\]
where $A=\diag(a,aq^{-1},\ldots,aq^{-(\ell-1)})$, and
\begin{align*}
B=\begin{bmatrix}
0   & b_1    &        &              &          \\
    &  0     & b_2    &              &        \\
    &        &\ddots   &\ddots        &        \\
    &        &        &   0          & b_{\ell-1}  \\
b_\ell &        &        &              &  0
\end{bmatrix}.
\end{align*}

For $1\leq j \leq \ell-1$,
define a map
\begin{align}
\label{eqn:psii}
\psi_{q,j}: \GL_j(K)\times \bbC^{j}  \longrightarrow \scrN_{q,j}
\end{align}
by setting
\[
\psi_{q,j} \big( g, (a_1,a_2,\ldots, a_j) \big)=(gJ_j(0)g^{-1},gBg^{-1}),
\]
where $B=\L^q(j,j,(a_1,a_2,\ldots,a_j))$.

Then $\phi_{q,i}$ and $\psi_{q,j}$ are morphisms of algebraic varieties for any $1\leq i\leq \ell$ and $1\leq j\leq \ell-1$.


\begin{prop}
\label{prop:irreducibility of <l}
The varieties $\scrD_{q,i}$ and $\scrN_{q,j}$ are irreducible for all $1\leq i\leq \ell$ and $1\leq j\leq \ell-1$.

\end{prop}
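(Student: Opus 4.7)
The plan is to realise each of $\scrD_{q,i}$ and $\scrN_{q,j}$ as the Zariski closure of the image of an explicit morphism whose domain is manifestly irreducible. Since a continuous image of an irreducible space is irreducible, and since the closure of an irreducible subset is irreducible, this will finish the proof. The maps in question are precisely $\phi_{q,i}$ and $\psi_{q,j}$ already set up in \eqref{eqn:phii} and \eqref{eqn:psii}; what remains is to identify their images on the nose.

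Consider $\scrD_{q,i}$ for $1 \le i \le \ell$. The domain of $\phi_{q,i}$ (respectively $\phi_{q,\ell}$) is a product of $\GL_i$ with affine spaces. Since $\GL_i$ is a Zariski-open subset of $M_{i\times i}(\bbC)$ and thus irreducible, and a product of irreducibles is irreducible, the domain is irreducible. The central claim is that $\mathrm{Im}(\phi_{q,i}) = \scrD^\circ_{q,i}$. By Definition \ref{df: D}, every element of $\scrD^\circ_{q,i}$ is a $\GL_i$-conjugate of some pair $(A,B)$ with $A = \diag(a, aq^{-1}, \ldots, aq^{-(i-1)})$ and $a \in \bbC^*$; and Proposition \ref{QCmatrix} forces $B$ to take exactly the shape prescribed in the definition of $\phi_{q,i}$. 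For $i < \ell$ the eigenvalues of $A$ are pairwise non-$q$-equivalent outside the super-diagonal shift $k = j+1$, so $B$ is strictly super-diagonal with free entries $b_1, \ldots, b_{i-1}$. For $i = \ell$ the cyclic identity $aq^{-(\ell-1)} = q \cdot a$ supplies a single extra solution $(j,k) = (\ell,1)$ to $a_j = q a_k$, producing the corner entry $b_\ell$ in \eqref{eqn: form B}. The reverse inclusion is tautological. Taking closures, $\scrD_{q,i} = \overline{\mathrm{Im}(\phi_{q,i})}$ is irreducible.

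The argument for $\scrN_{q,j}$ with $1 \le j \le \ell-1$ is parallel. The domain $\GL_j \times \bbC^{j}$ of $\psi_{q,j}$ is irreducible, and Lemma \ref{Jordan q-layer} (applied with $\alpha_1 = \alpha_2 = 0$ and $m = n = j$) shows that any $B$ with $J_j(0) B = q B J_j(0)$ is a $q$-layered matrix $\L^q(j,j,\vec{v})$ for some $\vec{v} \in \bbC^j$. Hence $\mathrm{Im}(\psi_{q,j}) = \scrN^\circ_{q,j}$, and $\scrN_{q,j} = \overline{\mathrm{Im}(\psi_{q,j})}$ is irreducible.

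No serious obstacle is anticipated. The whole proposition reduces to normalizing $A$ in Jordan form, reading off the structure of $B$ from the results of Section \ref{sec: pre}, and then quoting elementary topology of irreducible sets. The only mildly delicate point is the $i = \ell$ case of $\scrD_{q,i}$, where the wrap-around in the eigenvalues of $A$ demands the extra coordinate $b_\ell$ in the domain of $\phi_{q,\ell}$; this is already built into the definition of the map.
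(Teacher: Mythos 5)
Your proof is correct and takes essentially the same route as the paper: both realise $\scrD_{q,i}$ and $\scrN_{q,j}$ as closures of images of the morphisms $\phi_{q,i}$ and $\psi_{q,j}$ from irreducible domains. One small imprecision: the claim that ``the reverse inclusion is tautological'' (i.e.\ $\mathrm{Im}(\phi_{q,i})\subseteq\scrD^\circ_{q,i}$) is not quite right, since the domain $\GL_i\times\bbC\times\bbC^{i-1}$ allows $a=0$ while Definition~\ref{df: D} requires $a\in\bbC^*$; the paper handles this by restricting $\phi_{q,i}$ to the dense open subset $\scrT_i$ where $a\neq 0$, for which $\phi_{q,i}(\scrT_i)=\scrD^\circ_{q,i}$ is an honest equality. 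Your conclusion still stands, since the extra $a=0$ points lie in $\scrD_{q,i}$ by continuity and the closures coincide, but the cleaner fix is to restrict the domain as the paper does.
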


\begin{proof}

Denote by $\scrT_i \subset \GL_i\times \bbC\times \bbC^{i-1}$ the open subset consisting of the triples
\[
\big( g, a, (b_1,b_2,\dots,b_{i-1} \big)
\]
such that $a\in \bbC^*$ for any $1\leq i\leq \ell-1$. Clearly, $\scrT_i$ is irreducible.
By definition, $\scrD_{q,i}$ is the closure of $\phi_{q,i} (\scrT_i)$. Hence, $\scrD_{q,i}$ is irreducible.

For the irreducibility of $\scrD_{q,\ell}$ and $\scrN_{q,j}$ for any $1\leq j\leq \ell-1$, the proof is similar, hence omitted here.
\end{proof}

\subsection{$\scrD_{q,i}$ and $\scrN_{q,j}$ are irreducible components}

In this subsection, we prove that the varieties $\scrD_{q,i}$ and $\scrN_{q,j}$ are irreducible components.

For any $A=(a_{ij})\in M_{n\times n}(\bbC)$, denote by
\[
\chi^A=x^n+ \chi^A_{n-1} x^{n-1}+\cdots + \chi^A_1 x +\chi^A_0
\]
its characteristic polynomial.
Then $\chi_k^A$ are polynomials in the $a_{ij}$'s for any $0\leq k\leq n-1$.

For simplicity, we introduce the following notation:
\begin{align}
\label{eqn: def bix}
(b;i)_x:=\diag(b,bx,bx^2,\ldots, bx^{i-1}),\,\,\forall i\geq1.
\end{align}

\begin{lem}\label{lem:nilpotent1}
For any $(A,B)\in \scrK_{q,n}$ with
\[
A=\diag(\mathbb{J}_{1,{r_1}}(0), \mathbb{J}_{2,{r_2}}(0),\ldots,
\mathbb{J}_{{k},r_{k}}(0))\text{ such that }k<\ell,\]
then $(A,B)\in \scrK_{q,(\vec{0},\bfr)}$, where
$\bfr=(r_1,\dots,r_{k},\overbrace{0,\dots,0}^{\ell-k-1})$; see \eqref{eqn: J}.
\end{lem}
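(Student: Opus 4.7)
The plan is to exhibit a Zariski-dense open subset $\Omega\subseteq\scrR_{q,A}$ on which $(A,B)$ visibly decomposes into summands of the type prescribed by $\bfr$, and then extend the conclusion to every $B$ by closedness of $\scrK_{q,(\vec 0,\bfr)}$. First I would reduce $A$ to a standard form: by Lemma~\ref{lem:sJordan}, $A$ is $\GL_n$-conjugate to $\widetilde A=\diag\bigl(J_1^{r_1}(0),\ldots,J_k^{r_k}(0)\bigr)$, so (using $\GL_n$-invariance of $\scrK_{q,(\vec 0,\bfr)}$) I may replace $(A,B)$ by a $\GL_n$-conjugate and assume $A=\widetilde A$. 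By Proposition~\ref{QCmatrix}, $\scrR_{q,A}=\{B:AB=qBA\}$ is then an affine linear subspace of $M_{n\times n}(\bbC)$, hence irreducible.

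The subset $\Omega$ I have in mind consists of those $B\in\scrR_{q,A}$ having $n$ pairwise distinct eigenvalues. To see $\Omega\neq\emptyset$, take $B$ block-diagonal with diagonal blocks $\L^q(s_a,s_a,(v_a,0,\ldots,0))=\diag(v_a,qv_a,\ldots,q^{s_a-1}v_a)$: the hypothesis $s_a\le k<\ell$ makes the eigenvalues inside each block distinct, and for a generic choice of the scalars $v_a\in\bbC^*$ the cross-block ratios $v_a/v_b$ avoid the finite set $\{q^{j-i}:0\le i<s_a,\,0\le j<s_b\}$ of coincidences, so all $n$ eigenvalues of $B$ are distinct. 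Thus $\Omega$ is a non-empty Zariski open subset of the irreducible variety $\scrR_{q,A}$, hence Zariski dense.

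For $B\in\Omega$ I would decompose $(A,B)$ along $B$-eigenspaces. Writing $\bbC^n=\bigoplus_{\lambda}V_\lambda$, the relation $AB=qBA$ forces $A(V_\lambda)\subseteq V_{\lambda/q}$ (setting $V_0:=0$), and nilpotency of $A$ forces every maximal sequence $\lambda,\lambda/q,\lambda/q^2,\ldots$ inside $\mathrm{Spec}(B)$ to terminate, so $\mathrm{Spec}(B)$ partitions into finite chains of length $s$. The span $W$ of the eigenvectors along one chain is stable under both $A$ and $B$, with $A|_W=J_s(0)$ and $B|_W=\L^q(s,s,(\lambda q^{-(s-1)},0,\ldots,0))$ in a suitable basis, so $(A|_W,B|_W)\in\scrN_{q,s}^\circ$. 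The chain-subspaces give an $A$-invariant direct-sum decomposition whose summands are single Jordan blocks, and since the Jordan type of $A$ is fixed as $\{1^{r_1},\ldots,k^{r_k}\}$, the multiset of chain lengths must match: there are exactly $r_s$ chains of length $s$. Hence $(A,B)$ is $\GL_n$-isomorphic to an element of $\scrN_{q,1}^{\oplus r_1}\oplus\cdots\oplus\scrN_{q,k}^{\oplus r_k}\subseteq\scrK_{q,(\vec 0,\bfr)}$.

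The preimage $\Sigma:=\{B\in\scrR_{q,A}:(A,B)\in\scrK_{q,(\vec 0,\bfr)}\}$ is Zariski closed in the irreducible variety $\scrR_{q,A}$ and contains the dense subset $\Omega$, hence $\Sigma=\scrR_{q,A}$ and the lemma follows. The main obstacle is the decomposition step, where one has to match the $q^{-1}$-chains in $\mathrm{Spec}(B)$ with the Jordan blocks of $A$: this works precisely because $k<\ell$ guarantees all chains have length $<\ell$ and thus cannot wrap around cyclically, so each chain-subspace is a genuine $A$-cyclic summand of the predicted size.
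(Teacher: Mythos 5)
Your overall strategy is sound and close in spirit to the paper's: pick a dense open set of $B$'s in $\scrR_{q,A}$, diagonalize $B$ there, match the $q^{-1}$-chains of eigenvalues of $B$ against the Jordan type of $A$, and close up. The gap is in the identification of chain-subspaces with Jordan blocks. From $AB=qBA$ one does get $A(V_\lambda)\subseteq V_{\lambda/q}$, so each maximal $q^{-1}$-chain spans a subspace $W$ stable under both $A$ and $B$. But ``$B$ has $n$ distinct eigenvalues'' is not strong enough to force $A|_W$ to be a single Jordan block $J_s(0)$: the one-dimensional maps $V_\mu\to V_{\mu/q}$ can vanish, so $A|_W$ can be a direct sum of smaller Jordan blocks, and the chain lengths need not equal the Jordan block sizes of $A$. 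For instance with $\ell=3$ and $r_1=2$ (so $A=0_{2\times 2}$), take $B=\diag(1,q)\in\Omega$; then $\{1,q\}$ is a single $q^{-1}$-chain of length $2$, yet $A|_W=0=J_1(0)\oplus J_1(0)$, not $J_2(0)$. The assertion ``the multiset of chain lengths must match: there are exactly $r_s$ chains of length $s$'' is therefore false in general, and with it the step ``so $(A|_W,B|_W)\in\scrN_{q,s}^\circ$''.

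The argument is repairable without changing its structure. Either strengthen $\Omega$ to something like the paper's open set, requiring eigenvalues of $B$ attached to distinct Jordan blocks of $A$ to be pairwise $q$-inequivalent, so that the chain structure of the eigenvalue set of $B$ is forced to mirror the Jordan type of $A$; or keep your $\Omega$ and split each chain-subspace $W$ further at every index where $A|_{V_\mu}\colon V_\mu\to V_{\mu/q}$ vanishes. The pieces $W_i$ so obtained are spanned by $B$-eigenvectors along consecutive sub-chains, hence $A$- and $B$-invariant; $A|_{W_i}=J_{t_i}(0)$ in a suitable basis with $t_i\le k<\ell$; and the multiset $\{t_i\}$ equals the Jordan type $\{1^{r_1},\ldots,k^{r_k}\}$ of $A$ by construction. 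This yields $(A,B)\in\scrN_{q,1}^{\oplus r_1}\oplus\cdots\oplus\scrN_{q,k}^{\oplus r_k}$ for every $B\in\Omega$, after which your closure step finishes the proof. With this correction your route is a coordinate-free variant of the paper's argument, which instead works with explicit $q$-layered block matrices, a permutation to block upper-triangular form, and the duality $\theta$ of Lemma~\ref{lem:switch}.
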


\begin{proof}

Let $A=\diag(\mathbb{J}_{1,{r_1}}(0), \mathbb{J}_{2,{r_2}}(0),\ldots,
\mathbb{J}_{{k},r_{k}}(0))$. For any $B\in \scrR_{q,A}$, it is a $k\times k$ block matrix $(B_{ij})$,
where $B_{ij}=\QLB(i,j,(B^{(1)}_{ij},B^{(2)}_{ij},\ldots,B^{(\min\{i,j\})}_{ij}))$ is a $q$-layered block matrix by Proposition \ref{QLBmatrix}.
Then there exists a permutation matrix $P$ such that
$P^{-1}BP$ is a $\frac{1}{2}k(k+1)\times \frac{1}{2}k(k+1)$ block upper-diagonal matrix
with diagonals being a rearrangement of the diagonal blocks of $B$, i.e.,
the diagonal entries in the resulting block upper-triangular matrix are
\begin{eqnarray}
\label{eqn: form of B 1}
\diag\big( B^{(1)}_{k,k};B^{(1)}_{k-1,k-1},qB^{(1)}_{k,k};\ldots;B^{(1)}_{1,1},qB^{(1)}_{2,2},\ldots,q^{k-1}B^{(1)}_{k,k} \big),
\end{eqnarray}
see \cite[Lemma 4.8]{CW18} for details. So
$$\chi^B=\prod^{k}_{i=1}\prod^{i}_{j=1}\chi^{q^{(j-1)}B^{(1)}_{ii}}.$$

Let $b_{i1},\ldots, b_{i,r_i}$ be all eigenvalues of $B_{ii}^{(1)}$ for $1\leq i\leq k$.
Let $\calU$ be the subset of $\scrR_{q,A}$ consisting of matrices $B=(B_{ij})$ with the following properties:
\begin{itemize}
\item $B\text{ is invertible}$;
\item $\text{for any eigenvalue }a (\text{ respectively }b) \text{ of }B^{(1)}_{ii} (\text{ respectively }B^{(1)}_{jj})$, $a$ and $b$ are not $q$-equivalent,
$\forall 1\leq i,j\leq k$.
\end{itemize}
Obviously, $\calU$ is a dense open set of $\scrR_{q,A}$.

For any $B\in \calU$, by our assumption, there exists an invertible matrix $Q$ such that
\begin{align*}
&Q^{-1}BQ\\
=&\diag(b_{11};\ldots, b_{1r_1}; (b_{21};2)_{q^{-1}}, \ldots, (b_{2r_2};2)_{q^{-1}}; \ldots;
(b_{k1};k)_{q^{-1}},\ldots,(b_{kr_k};k)_{q^{-1}}).
\end{align*}
Then Lemma \ref{QCmatrix} shows that
$$Q^{-1}AQ=\diag(J_1^{r_1}(0),J_2^{r_2}(0),\dots, J_k^{r_k}(0)).$$
Thus $(B,A)\in \scrK_{q^{-1},\theta(\vec{0},\bfr))}$.
By Lemma \ref{lem:switch}, we obtain that $(A,B)\in \scrK_{q,(\vec{0},\bfr)}$.
It follows that $(A,B)\in \scrK_{q,(\vec{0},\bfr)}$ for any $B\in \scrR_{q,A}=\ov{\calU}$.

\end{proof}

\subsubsection{}

\begin{lem}
\label{lem:invereigen}
 For any $(A,B)\in \scrK_{q,n}$, if $A$ is invertible, and $b$
is a nonzero eigenvalue of $B$, then $b,bq^{-1},\ldots, bq^{-(\ell-1)}$ are also eigenvalues of $B$. In particular, in this case, we have $n\geq \ell$.
\end{lem}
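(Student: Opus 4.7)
The plan is to exploit the $q$-commutation relation $AB=qBA$ together with the invertibility of $A$ to produce a similarity between $B$ and a scalar multiple of itself.

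First, I would rewrite the relation: since $A$ is invertible, $AB=qBA$ yields
\[
A^{-1}BA = q^{-1}B,
\]
so $B$ and $q^{-1}B$ are conjugate in $M_{n\times n}(\bbC)$. In particular, they share the same spectrum (indeed the same characteristic polynomial).

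Next, I would use this to generate the orbit. If $v$ is an eigenvector of $B$ with nonzero eigenvalue $b$, then $(q^{-1}B)v = q^{-1}b\, v$, so $q^{-1}b$ is an eigenvalue of $q^{-1}B$, hence also of $B$ by the similarity above. Iterating, $q^{-k}b$ is an eigenvalue of $B$ for every $k\geq 0$. Since $\ell$ is the multiplicative order of $q$ and $b\neq 0$, the values $b, bq^{-1}, \ldots, bq^{-(\ell-1)}$ are pairwise distinct, while $q^{-\ell}b = b$ closes the orbit. Therefore all $\ell$ of these scalars are eigenvalues of $B$, and in particular $B$ has at least $\ell$ distinct eigenvalues, forcing $n\geq \ell$.

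There is no real obstacle here; the entire argument is a one-line manipulation of the defining relation plus the observation that similar matrices have identical spectra. The only point that deserves a sentence of justification is the distinctness of $b, bq^{-1}, \ldots, bq^{-(\ell-1)}$, which is immediate from the definition of $\ell$ as the order of $q$ and the hypothesis $b\neq 0$.
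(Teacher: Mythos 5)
Your proof is correct and is essentially the same as the paper's: both rest on the fact that conjugating $B$ by $A$ scales the spectrum by $q^{-1}$. The paper makes this explicit by exhibiting the eigenvectors $A^i\xi$ (showing $B(A^i\xi)=bq^{-i}(A^i\xi)$), while you phrase it as the similarity $A^{-1}BA=q^{-1}B$; these are two presentations of the same one-line idea, and your spelled-out justification of the pairwise distinctness of $b,bq^{-1},\dots,bq^{-(\ell-1)}$ (left implicit in the paper) is correct.
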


\begin{proof}
As $b$
is a nonzero eigenvalue of $B$, there exists a nonzero vector $\xi$ such that $B\xi=b\xi$. For any $1\leq i\leq \ell-1$,
we have $B(A^i\xi)=q^{-1}(ABA^{i-1}\xi)=\cdots=q^{-i}A^iB\xi=bq^{-i}(A^i\xi)$. Hence,
$b,bq^{-1},\ldots, bq^{-(\ell-1)}$ are eigenvalues of $B$ since $A$ is invertible.
\end{proof}


\begin{lem}
\label{lem:not include0}
For any $(A,B)\in\scrK_{q,(\bfm,\bfr)}\subseteq \scrK_{q,n}$, we have
${\rank}(A)\leq n-|\bfr|$ and ${\rank}(B)\leq n-|\bfm|+m_\ell$.
\end{lem}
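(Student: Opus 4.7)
The plan is a straightforward semicontinuity argument. First, I would invoke lower semicontinuity of matrix rank: for any $k$, the locus $\{X\in M_{n\times n}(\bbC)\mid \rank(X)\leq k\}$ is Zariski closed, being cut out by the vanishing of all $(k+1)\times(k+1)$ minors. Consequently, it suffices to verify the two bounds on any Zariski dense subset of $\scrK_{q,(\bfm,\bfr)}$, since upper bounds on rank automatically extend to the closure.

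Second, I would work on the dense constructible subset
$$U:=U_{q,1}^{\oplus m_1}\oplus\cdots\oplus U_{q,\ell}^{\oplus m_{\ell}}\oplus V_{q,1}^{\oplus r_1}\oplus\cdots\oplus V_{q,\ell-1}^{\oplus r_{\ell-1}}$$
whose closure is $\scrK_{q,(\bfm,\bfr)}$ by Definition \ref{def:Kqmr}. Every element of $U$ is $\GL_n$-conjugate, via a block-diagonal element respecting the decomposition, to a block-diagonal pair $(A,B)$ whose blocks are in the standard forms described in Definition \ref{rem: calD calN} and formulas \eqref{eq:block a1}, \eqref{eqn: form B}. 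Because rank is conjugation-invariant, the total ranks are the sums of the ranks of the standard blocks.

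Third, I would read the block ranks off those explicit standard forms. A $U_{q,i}$-block contributes $\rank(A)=i$ (the diagonal $\diag(a,aq^{-1},\ldots,aq^{-(i-1)})$ has $a\neq 0$) and, for $i<\ell$, contributes $\rank(B)=i-1$ (strictly upper triangular with nonzero superdiagonal $b_1,\ldots,b_{i-1}$). A $U_{q,\ell}$-block contributes $\rank(A)=\ell$ and $\rank(B)=\ell$, since the cyclic matrix \eqref{eqn: form B} has determinant $\pm b_1\cdots b_\ell\neq 0$. A $V_{q,j}$-block contributes $\rank(A)=j-1$ (as $A=J_j(0)$) and $\rank(B)=j$, because by \eqref{eq:block a1} the matrix $\L^q(j,j,(b_1,\ldots,b_j))$ is upper triangular with diagonal $b_1,qb_1,\ldots,q^{j-1}b_1$, all nonzero. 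Summing these contributions and using the identity $\|\bfm\|+\|\bfr\|=n$ from \eqref{ML} yields
$$\rank(A)=\sum_{i=1}^{\ell}im_i+\sum_{j=1}^{\ell-1}(j-1)r_j=n-|\bfr|,\qquad \rank(B)=\sum_{i=1}^{\ell-1}(i-1)m_i+\ell m_\ell+\sum_{j=1}^{\ell-1}jr_j=n-|\bfm|+m_\ell$$
identically on $U$, and the first step turns these into the claimed inequalities on the closure $\scrK_{q,(\bfm,\bfr)}$.

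No genuine obstacle arises here; the argument is essentially bookkeeping once the right dense stratum is chosen. The only point worth flagging is that on the closure the ranks may strictly drop — for instance when the scalar $a$ parametrizing a $\scrD_{q,i}$-block specializes to $0$, or when a $V_{q,j}$-block degenerates to a less generic nilpotent pair — which is precisely why the statement asserts inequalities rather than equalities, and this is accommodated automatically by the semicontinuity of rank.
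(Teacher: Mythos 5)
Your proof is correct and follows the same route as the paper: compute the ranks exactly on the dense stratum $U_{q,1}^{\oplus m_1}\oplus\cdots\oplus V_{q,\ell-1}^{\oplus r_{\ell-1}}$ and then pass to the closure via semicontinuity of rank. The paper compresses the block-by-block rank bookkeeping into a single ``by definition,'' whereas you spell it out and also make the semicontinuity step explicit; your remark at the end correctly explains why only inequalities survive on the closure.
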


\begin{proof}
For any $(A,B)\in
\calD_{q,1}^{\oplus m_1}\oplus \cdots\oplus\calD_{q,\ell}^{\oplus m_{\ell}}
\oplus\calN_{q,1}^{\oplus r_1}\oplus \cdots\oplus\calN_{q,\ell-1}^{\oplus r_{\ell-1}}$, by definition,  we have
${\rank}(A)= n-|\bfr|$ and ${\rank}(B)= n-|\bfm|+m_\ell$. Since
$$\scrK_{q,(\bfm,\bfr)}=\ov{\calD_{q,1}^{\oplus m_1}\oplus \cdots\oplus\calD_{q,\ell}^{\oplus m_{\ell}}
\oplus\calN_{q,1}^{\oplus r_1}\oplus \cdots\oplus\calN_{q,\ell-1}^{\oplus r_{\ell-1}}},$$
the desired result follows.
\end{proof}

\begin{lem}
\label{lem:not include}
We have the following:
\begin{itemize}
\item[(i)] $\scrN_{q,i} \nsubseteqq   \scrD_{q,i}$, and $\scrD_{q,i} \nsubseteqq \scrN_{q,i}$ for any $1\leq i\leq \ell-1$.
\item[(ii)] $\scrD_{q,i} \nsubseteqq \scrK_{q,(\bfm,\bfr)}\subseteq \scrK_{q,i}$ and $\scrN_{q,i} \nsubseteqq \scrK_{q,(\bfm,\bfr)}\subseteq \scrK_{q,i}$ for any $1\leq i\leq \ell-1$, and any $(\bfm,\bfr)\in\ML_{q,i}$ such that
$ \|\bfm\| +\| \bfr\| =i$ and $\| \bfm\|\cdot\| \bfr\|\neq 0$.
\item[(iii)] $\scrD_{q,\ell}\nsubseteqq \scrK_{q,(\bfm,\bfr)}$ for any $(\bfm,\bfr)\in\ML_{q,\ell}$ such that $m_\ell=0$.
\end{itemize}
\end{lem}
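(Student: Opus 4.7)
The plan is to derive all three statements from the rank inequalities of Lemma \ref{lem:not include0} by exhibiting explicit points in $\scrD_{q,i}$, $\scrN_{q,i}$ or $\scrD_{q,\ell}$ whose matrices have strictly larger rank than the rank bounds impose on the target variety $\scrK_{q,(\bfm,\bfr)}$. Throughout, note that for any $(\bfm,\bfr)$ we have $\|\bfm\|\geq 1 \Rightarrow |\bfm|\geq 1$ and $\|\bfr\|\geq 1 \Rightarrow |\bfr|\geq 1$, since the entries are nonnegative integers.

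For (i), view $\scrD_{q,i}$ as $\scrK_{q,(e_i,\vec{0})}\subseteq\scrK_{q,i}$ and $\scrN_{q,i}$ as $\scrK_{q,(\vec{0},e_i)}\subseteq\scrK_{q,i}$, where $e_i$ is the $i$-th standard basis vector. Since $i<\ell$, in both cases $m_\ell=0$, so Lemma \ref{lem:not include0} forces $\rank(B)\leq i-1$ on $\scrD_{q,i}$ and $\rank(A)\leq i-1$ on $\scrN_{q,i}$. On the other hand, the point $\phi_{q,i}(I_i, a, (b_1,\ldots,b_{i-1}))$ with $a\in\bbC^*$ lies in $\scrD_{q,i}$ and has $A$ diagonal with nonzero entries, hence $\rank(A)=i$; so it cannot lie in $\scrN_{q,i}$. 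Dually, $\psi_{q,i}(I_i, (b_1,\ldots,b_i))$ with $b_1\in\bbC^*$ lies in $\scrN_{q,i}$ and has $B$ upper triangular with diagonal $(b_1,qb_1,\ldots,q^{i-1}b_1)$, all nonzero since $i-1<\ell$; so $\rank(B)=i$ and the point cannot lie in $\scrD_{q,i}$.

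For (ii), any $(\bfm,\bfr)\in\ML_{q,i}$ with $i<\ell$ automatically satisfies $m_\ell=0$. The hypothesis $\|\bfm\|\cdot\|\bfr\|\neq 0$ gives $|\bfm|\geq 1$ and $|\bfr|\geq 1$, whence by Lemma \ref{lem:not include0}
\[
\rank(A)\leq i-|\bfr|\leq i-1,\qquad \rank(B)\leq i-|\bfm|+m_\ell\leq i-1
\]
uniformly on $\scrK_{q,(\bfm,\bfr)}$. The same two test points constructed in (i) then violate these bounds, proving $\scrD_{q,i}\nsubseteqq\scrK_{q,(\bfm,\bfr)}$ and $\scrN_{q,i}\nsubseteqq\scrK_{q,(\bfm,\bfr)}$ respectively.

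For (iii), let $(\bfm,\bfr)\in\ML_{q,\ell}$ with $m_\ell=0$; since $\|\bfm\|+\|\bfr\|=\ell>0$ we have $|\bfm|+|\bfr|\geq 1$. Consider the point $\phi_{q,\ell}(I_\ell, a,(b_1,\ldots,b_\ell))\in\scrD_{q,\ell}$ with $a\in\bbC^*$ and all $b_j\in\bbC^*$. The $A$-component is diagonal with nonzero entries, and the $B$-component is the cyclic matrix of Definition \ref{rem: calD calN}(ii); expanding its determinant along the first column gives $\det(B)=\pm b_1b_2\cdots b_\ell\neq 0$. Thus $\rank(A)=\rank(B)=\ell$. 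However, Lemma \ref{lem:not include0} would force any point of $\scrK_{q,(\bfm,\bfr)}$ with $m_\ell=0$ to satisfy $\rank(A)\leq\ell-|\bfr|$ and $\rank(B)\leq\ell-|\bfm|$; since $|\bfm|+|\bfr|\geq 1$, at least one of these bounds is $\leq\ell-1$, contradicting the ranks of the chosen point.

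The argument is essentially bookkeeping once Lemma \ref{lem:not include0} is in hand; the only delicate point is in (iii), where one must observe that $\scrD_{q,\ell}$ genuinely contains points with $B$ invertible (the full cyclic construction), so that the rank obstruction is triggered regardless of how $(\bfm,\bfr)$ distributes its mass between $\bfm$ and $\bfr$.
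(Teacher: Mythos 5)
Your proof is correct, and it follows essentially the same route as the paper: all three statements are obstructed by the rank bounds of Lemma~\ref{lem:not include0} against explicit witness points with maximal-rank $A$ or $B$. The paper phrases (i) and (ii) slightly less uniformly (invertibility of $A$ on $U_{q,i}$ versus $\det A=0$ on $\scrN_{q,i}$; nilpotency of $B$ on $\scrD_{q,j}$), and invokes Lemma~\ref{lem:not include0} explicitly only for (iii), whereas you consolidate all three parts under the same rank inequality — a cleaner bookkeeping of the same idea, with no substantive difference in content.
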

\begin{proof}

(i) Recall the definition of $\calD_{q,i}$ in Definition \ref{rem: calD calN}.
Then $\ov{\calD_{q,i}}=\scrD_{q,i}$. Note that for any $(A,B)\in \calD_{q,i}$, $A$ is invertible. However, for any $(A,B)\in \scrN_{q,i}$, we have $\det(A)=0$. So $\calD_{q,i}\cap \scrN_{q,i}=\emptyset$.
Hence, $\scrN_{q,i} \nsubseteqq  \scrD_{q,i}$ and $\scrD_{q,i} \nsubseteqq \scrN_{q,i}$ for any $1\leq i\leq \ell-1$.

(ii) For any $(A,B)\in \scrD_{q,j}$ ($1\leq j\leq \ell-1$), it is easy to see that $B$ is a nilpotent matrix.
Hence, $\scrN_{q,i} \nsubseteqq  \scrK_{q,(\bfm,\bfr)}$ for any $\bfm\neq \vec{0}$ by Definition \ref{rem: calD calN} (ii).
Similarly, $\scrD_{q,i} \nsubseteqq  \scrK_{q,(\bfm,\bfr)}$ for any $\bfr\neq \vec{0}$ for any $1\leq i\leq \ell-1$.

(iii) For any $(A,B)\in\calD_{q,\ell}\subseteq \scrD_{q,\ell}$, we have $\det(A)\neq0 \  \text{and}\ \det(B)\neq0$.
It follows from Lemma \ref{lem:not include0} that $\calD_{q,\ell} \cap \scrK_{q,(\bfm,\bfr)} =\emptyset$ if $m_\ell=0$.
So $\scrD_{q,\ell}\nsubseteqq \scrK_{q,(\bfm,\bfr)}$ for any $(\bfm,\bfr)\in\ML_{q,\ell}$ with $m_\ell=0$.
\end{proof}

For a partition $\nu =(\nu_1, \nu_2, \ldots)$, 
we denote by $J_\nu(a)$ the Jordan normal form of eigenvalue $a$ and of block sizes $\nu_1, \nu_2, \cdots$. 

\begin{thm}
\label{prop: irreducible components for smaller n}
$\scrD_{q,i}$ is an irreducible component of $\scrK_{q,i}$ and $\scrN_{q,j}$ is an irreducible component of $\scrK_{q,j}$ for any $1\leq i\leq \ell$, and $1\leq j\leq \ell-1$.
\end{thm}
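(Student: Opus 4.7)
The plan: Proposition~\ref{prop:irreducibility of <l} already gives irreducibility, so I only need to show that $\scrD_{q,i}$ and $\scrN_{q,j}$ are maximal among irreducible closed subvarieties of $\scrK_{q,i}$ and $\scrK_{q,j}$, respectively. My strategy is a local dimension argument: compute $\dim\scrD_{q,i}$ (resp.\ $\dim\scrN_{q,j}$) and the Zariski tangent space of the ambient variety at a generic point of the subvariety, and observe that the two numbers agree. Because the tangent space dimension bounds above the dimension of any irreducible component through the point, this equality will force our subvariety to be the unique irreducible component through that generic point.

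First, using the dominant morphisms $\phi_{q,i}$ of \eqref{eqn:phii} and $\psi_{q,j}$ of \eqref{eqn:psii}, I would compute the dimensions via generic fibers. For $i<\ell$, the eigenvalues $a,aq^{-1},\ldots,aq^{-(i-1)}$ of the diagonal $A$ in $U_{q,i}$ are distinct, so the generic fiber of $\phi_{q,i}$ is a translate of the $i$-dimensional diagonal torus; this yields $\dim\scrD_{q,i}=i^2+i-i=i^2$. For $i=\ell$ the extra wrap-around parameter $b_\ell$ contributes one more dimension and gives $\dim\scrD_{q,\ell}=\ell^2+1$. For $\scrN_{q,j}$, the centralizer of $J_j(0)$ is the $j$-dimensional group of upper-triangular Toeplitz matrices, which preserves the $q$-layered form of $B$, so the generic fiber of $\psi_{q,j}$ has dimension $j$, yielding $\dim\scrN_{q,j}=j^2$.

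Next I would compute the tangent space at a generic $(A,B)$, which is cut out in $M_{i\times i}(\bbC)\times M_{i\times i}(\bbC)$ by the linearized equation
\begin{equation*}
AY-qYA+XB-qBX=0.
\end{equation*}
For $(A,B)\in U_{q,i}$, the operator $Y\mapsto AY-qYA$ acts on the $(r,s)$-entry by multiplication by $aq^{-(r-1)}(1-q^{r+1-s})$, which vanishes precisely when $s\equiv r+1\pmod{\ell}$. Hence $Y_{rs}$ is free for exactly those $(r,s)$ and is solved in terms of $X$ for the rest; simultaneously, one gets one linear condition on $X$ for each free $Y$-entry, of the form $X_{rr}=qX_{r+1,r+1}$ (cyclically when $i=\ell$). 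A direct count gives $i-1$ free $Y$-entries and $i-1$ independent conditions on $X$ when $i<\ell$, and $\ell$ free $Y$-entries with only $\ell-1$ independent conditions on $X$ when $i=\ell$ (the last condition $X_{\ell\ell}=qX_{11}$ being a consequence of the others precisely because $q^\ell=1$). This yields tangent space dimensions $i^2$ and $\ell^2+1$ respectively. For $\scrN_{q,j}$, the analogous computation at $(J_j(0),\L^q(j,j,\vec v))$ using Lemma~\ref{Jordan q-layer} gives $j^2$.

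The main subtlety is the redundancy of the last equation when $i=\ell$, which accounts for the extra $+1$ in $\dim\scrD_{q,\ell}$; this is the point where the cyclic relation $q^\ell=1$ enters essentially. The $\scrN_{q,j}$ tangent space count is also delicate, because $A=J_j(0)$ is not diagonalizable and the system does not decouple entrywise, so one must exploit the $q$-layered form directly. Once the tangent space dimensions match Step~1, the generic point is smooth in the ambient variety, hence lies in a unique irreducible component, and that component must coincide with $\scrD_{q,i}$ (resp.\ $\scrN_{q,j}$), concluding the proof.
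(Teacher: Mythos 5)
Your proposal is correct, and it takes a genuinely different route from the paper. The paper proves this theorem by induction on $k\leq\ell$: it establishes (using Lemmas \ref{lem:nilpotent1}, \ref{lem:invereigen}, and a case analysis on the eigenvalues of $A$) the covering \eqref{eqn:<l}--\eqref{eqn:=l} of $\scrK_{q,k}$ by finitely many irreducible closed subvarieties, and then invokes the non-inclusion Lemma~\ref{lem:not include} to conclude that $\scrD_{q,k}$ and $\scrN_{q,k}$ cannot be absorbed into the other pieces. Your approach instead is entirely local: you compare $\dim\scrD_{q,i}$ (resp.\ $\dim\scrN_{q,j}$) with the Zariski tangent space dimension of $\scrK_{q,i}$ (resp.\ $\scrK_{q,j}$) at a generic point, and since $\dim T_p\scrK_{q,i}\geq\dim C$ for any irreducible component $C$ through $p$, equality forces $\scrD_{q,i}$ to coincide with the unique component through $p$. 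I checked the details: at $(A,B)\in U_{q,i}$ with $i<\ell$, the linearized equation $(AY-qYA+XB-qBX)_{rs}=0$ indeed solves for $Y_{rs}$ whenever $s\neq r+1$ and imposes the $i-1$ independent constraints $X_{rr}=qX_{r+1,r+1}$ otherwise, giving $i^2$; at $i=\ell$ the wrap-around constraint $X_{\ell\ell}=qX_{11}$ is the product of the others precisely because $q^\ell=1$, giving $\ell^2+1$; and a direct check for small $j$ (e.g.\ $j=2$, $\ell\geq3$) confirms the tangent space at a generic point of $\scrN_{q,j}$ is $j^2$-dimensional (alternatively one can invoke $\theta$ and Lemma~\ref{lem:switch} to reduce to the $\scrD$ case and avoid computing at a non-diagonalizable $A$). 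The dimension formulas you use in Step~1 reproduce the content of Lemma~\ref{prop: dimension of generators}, which the paper proves only in Section~\ref{sec: dim}, so your route front-loads that computation; conversely you avoid needing the covering \eqref{eqn:<l}, which however the paper needs again anyway in Proposition~\ref{prop:cover the entire space}. In short: the paper's method is global and feeds into the covering result; yours is local, self-contained, and moreover identifies the generic points of $\scrD_{q,i}$ and $\scrN_{q,j}$ as smooth points of the ambient variety, which is extra information not stated in the paper.
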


\begin{proof}
We prove it by describing the irreducible components of $\scrK_{q,n}$ by  induction on $n$ for $n\leq \ell$.

First, if $q=+1$, i.e., $\ell=1$, it is trivial. We shall assume that $q\neq1$, i.e., $\ell\geq2$.

When $n=1$, $\scrD_{q,1}=\{(a,0) \mid a\in \bbC\}$ and
$\scrN_{q,1}=\{(0,b) \mid b\in \bbC\}$ are irreducible subvarieties of $\scrK_{q,1}$, and $\scrK_{q,1}= \scrD_{q,1}\cup \scrN_{q,1}$.
Lemma \ref{lem:not include} (i) shows that neither $\scrD_{q,1} \subseteq \scrN_{q,1}$ nor $\scrN_{q,1} \subseteq\scrD_{q,1}$, so $\scrD_{q,1}$ and $\scrN_{q,1}$ are irreducible components by Proposition \ref{prop:irreducibility of <l}.

Assume that for any $1\leq k\leq \ell$, $\scrD_{q,i}$ and $\scrN_{q,i}$ are irreducible components for all $1\leq i\leq k-1$.
By Lemma \ref{lem:C-BS} and Lemma \ref{lem:extension0}, $\scrK_{q,(\bfm,\bfr)}$ are irreducible components
for any $(\bfm,\bfr)\in \ML_{q,k}$ which satisfies $\|\bfm\|\leq k-1$ and $\|\bfr\|\leq k-1$.

{\bf Claim:}
\begin{align}
\label{eqn:<l}
\scrK_{q,k}= \scrD_{q,k}\cup \scrN_{q,k}\cup \bigcup_{\tiny\begin{array}{cc}(\bfm,\bfr)\in \ML_{q,k}\\ \|\bfm\|\leq k-1, \|\bfr\|\leq k -1\end{array}} \scrK_{q, (\bfm,\bfr)}
\end{align}
if $k<\ell$;
and
\begin{align}
\label{eqn:=l}
\scrK_{q,\ell}= \scrD_{q,\ell}\cup \bigcup_{\tiny\begin{array}{cc}(\bfm,\bfr)\in \ML_{q,k}\\ \|\bfm\|< \ell, \|\bfr\|<\ell \end{array}} \scrK_{q, (\bfm,\bfr)}
\end{align}
if $k=\ell$.

For any $(A,B)\in \scrK_{q,k}$, let $\calA=\{a_1,\ldots, a_t\}$ be the set of all unequal eigenvalues of $A$. The proof of the claim is divided into the following three cases.

{\bf Case} (i): there exist two nonempty subsets $\calA_1,\calA_2$ of $\calA$
such that $\calA_1 \sqcup\calA_2=\calA$ and there do not exist any $a_i\in \calA_1$, $a_j\in \calA_2$
with
$a_i= a_jq^{-1}$ or $a_i=a_jq$. Up to a suitable conjugation, we can assume that $A=\diag(A_1,A_2)$ with $\calA_1$ (respectively $\calA_2$) as the set of the eigenvalues for $A_1$ (respectively $A_2$). Since $AB=qBA$, $B$ is of the form $B=\diag(B_1,B_2)$ such that $A_iB_i=qB_iA_i$ for $i=1,2$. By induction, we have
$(A_1,B_1)\in\scrK_{q,(\bfm_1,\bfr_1)}$, $(A_2,B_2)\in \scrK_{q,(\bfm_2,\bfr_2)}$
for some $(\bfm_1,\bfr_1)\in \ML_{q,k_1},(\bfm_2,\bfr_2) \in \ML_{q,k_2}$ with $k_1+k_2=k$.
Then $(A,B)\in\scrK_{q,(\bfm_1+\bfm_2,\bfr_1+\bfr_2)}$.

{\bf Case} (ii): $A$ is nilpotent. By Lemma \ref{lem:nilpotent1} there exists a vector
$\bfr\in\bbN^{\ell-1}$ such that $\|\bfr\|=k$ and $(A,B)\in \scrK_{q,(\vec{0},\bfr)}$.

{\bf Case} (iii): for any $(A,B)\in \scrK_{q,k}$ and $A=\diag(J_{\lambda_1}(a), J_{\lambda_2}(aq^{-1}),\ldots, J_{\lambda_t}(aq^{-{t+1}}))$
for $a\in \bbC^*$, $\lambda_i$ is a partition of $n_i$ with $1\leq i \leq t$, $t\leq k$ and $\sum^t_{i=1}n_i=k$.

{\bf Subcase} (a): \underline{$k<\ell$}. Note that $A$ is invertible. It follows from Lemma \ref{lem:invereigen} that $B$ is nilpotent.
Then there exists a vector
$\bfr\in\bbN^{\ell-1}$ such that $\|r\|=k$ and $(B,A)\in \scrK_{q^{-1},(\vec{0},\bfr)}$.
By Lemma \ref{lem:switch}, we have $(A,B)\in \scrK_{q,(\bfm,\vec{0})}$, where $\bfm=(r_1,\dots,r_{\ell-1},0)$.

{\bf Subcase} (b): \underline{$k=\ell$}. If $t=\ell$, then $A=\diag(a, aq^{-1},\ldots, aq^{-{(\ell-1)}})$, which yields that $(A,B)\in\scrD_{q,\ell}$;
if $t<\ell$, we claim that $B$ is nilpotent for any $B\in\scrR_{q,A}$. Otherwise, since $A$ is invertible, Lemma \ref{lem:invereigen} shows that $B$ is similar to $\diag(b, bq^{-1},\ldots, bq^{-(\ell-1)})$ for some $b\in\bbC^*$. Then $A$ is nilpotent
or similar to $\diag(c, cq^{-1},\ldots, cq^{-(\ell-1)})$ for some $c$ by using Lemma \ref{lem:invereigen} again. This contradicts to that $A=\diag(J_{\lambda_1}(a), J_{\lambda_2}(aq^{-1}),\ldots, J_{\lambda_t}(aq^{-{t+1}}))$ with $t<\ell$.
Hence the claim holds. The remaining proof is same to that of Subcase (a).

Therefore, \eqref{eqn:<l}--\eqref{eqn:=l} hold.
Together with Lemma \ref{lem:not include}, we have proved that $\scrD_{q,i}$ ($1\leq i\leq \ell$), $\scrN_{q,j}$ ($1\leq j\leq \ell-1$) are irreducible components.
\end{proof}

\begin{cor}
\label{cor: irreducible}
For any $(\bfm,\bfr)\in \ML_{q,n}$, we have $\scrK_{q,(\bfm,\bfr)}$ is an irreducible component of $\scrK_{q,n}$.
\end{cor}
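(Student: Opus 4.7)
The plan is to view $\scrK_{q,(\bfm,\bfr)}$ as the closure of a direct sum of already-known irreducible components and then invoke the Crawley-Boevey–Schr\"oer criterion stated in Lemma \ref{lem:C-BS}. All the ingredients have essentially been assembled earlier in the section; this corollary is a packaging step.

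Concretely, I would proceed as follows. First, by Theorem \ref{prop: irreducible components for smaller n} each $\scrD_{q,i}$ (for $1\leq i\leq \ell$) is an irreducible component of $\scrK_{q,i}$ and each $\scrN_{q,j}$ (for $1\leq j\leq \ell-1$) is an irreducible component of $\scrK_{q,j}$. Thus the collection
\[
\underbrace{\scrD_{q,1},\dots,\scrD_{q,1}}_{m_1},\,\dots,\,\underbrace{\scrD_{q,\ell},\dots,\scrD_{q,\ell}}_{m_\ell},\,\underbrace{\scrN_{q,1},\dots,\scrN_{q,1}}_{r_1},\,\dots,\,\underbrace{\scrN_{q,\ell-1},\dots,\scrN_{q,\ell-1}}_{r_{\ell-1}}
\]
is a list of irreducible components whose total dimension sum (of the ambient $\scrK_{q,\bullet}$) equals $\|\bfm\|+\|\bfr\|=n$, so Lemma \ref{lem:C-BS} is applicable in principle.

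Second, I would check the $\Ext^1$-vanishing hypothesis of Lemma \ref{lem:C-BS}. This is exactly Lemma \ref{lem:extension0}, which guarantees $\ext^1_{\bbA_q^2}(\scrV,\scrW)=0$ for any $\scrV,\scrW\in\{\scrD_{q,1},\dots,\scrD_{q,\ell},\scrN_{q,1},\dots,\scrN_{q,\ell-1}\}$. It is worth emphasising that the statement of Lemma \ref{lem:extension0} does not require $\scrV\neq \scrW$, so the vanishing also covers pairs of \emph{identical} factors arising from multiplicities $m_i\geq 2$ or $r_j\geq 2$ in the direct sum above; this is precisely what Lemma \ref{lem:C-BS} demands for every pair of distinct indices in the ordered list.

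Third, by Definition \ref{def:Kqmr} the variety $\scrK_{q,(\bfm,\bfr)}$ is by construction the closure of the direct sum of the factors listed above. Feeding this data into Lemma \ref{lem:C-BS} then immediately yields that $\scrK_{q,(\bfm,\bfr)}$ is an irreducible component of $\scrK_{q,n}$. There is no real obstacle in this step: the genuine work was done in Theorem \ref{prop: irreducible components for smaller n} (where the base cases $\scrD_{q,i}$ and $\scrN_{q,j}$ were shown to be components via the inductive analysis of $\scrK_{q,k}$ for $k\leq \ell$) and in Lemma \ref{lem:extension0} (the homological $\mathrm{Ext}^1$-vanishing). The corollary itself is essentially a one-line consequence.
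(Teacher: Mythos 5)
Your proposal is correct and follows exactly the paper's own argument: the corollary is deduced by combining Theorem \ref{prop: irreducible components for smaller n} (the $\scrD_{q,i}$ and $\scrN_{q,j}$ are irreducible components), Lemma \ref{lem:extension0} (the $\ext^1$-vanishing, which indeed also covers repeated factors), and Lemma \ref{lem:C-BS}. Your remark about multiplicities $m_i\geq 2$ or $r_j\geq 2$ is a nice explicit check of a point the paper leaves implicit, but the route is the same.
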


\begin{proof}
It follows from Lemma \ref{lem:extension0}, Lemma \ref{lem:C-BS} and Theorem \ref{prop: irreducible components for smaller n}.
\end{proof}

\section{$q$-chains}
\label{sec: further prel}

\subsection{Definition of $q$-chains}
\label{section q-relation}

The notion of $q$-chains will be used in \S\ref{sec: cover}. Recall the definition of $q$-equivalent in Definition \ref{def: l relation}.


Let $S$ be a multiset of numbers $a_1, a_2,\cdots, a_k$. We can endow $S$ a set $\calA_S$ of
diagonal matrices of the form $\diag(b_1,b_2,\cdots, b_k)$ with $b_1,b_2,\dots,b_k$ a permutation of $S$. Note that all matrices in $\calA_S$ are similar.

\begin{df}
A sequence $S$ of $a, aq^{-1}, \ldots, aq^{-(k-1)}$ with $1\leq k\leq \ell$ is called a $q$-chain of length $k$ for $a\in \bbC^*$.
\end{df}

For a $q$-chain $S$ of length $k$, we take a subset $\calA^\diamond_S \subseteq \calA_S$ consisting of all diagonal matrices of the form $\diag(a, aq^{-1}, \ldots, aq^{-(k-1)})$. In fact, if $k<\ell$, then $\calA^\diamond_S$ has only one matrix; otherwise, $\calA^\diamond_S$ has $\ell$ matrices. A matrix $A_S$ in $\calA^\diamond_S$ is called to be \emph{associated} to the $q$-chain $S$.

\begin{lem}
\label{lem:unique decomposition}
Let $a_1,a_2,\dots,a_{N}$ be nonzero numbers which are not $q$-equivalent pairwise.
Let $S=\{a_1,\ldots,a_{n_{1}};a_1q^{-1},\ldots,a_{n_{2}}q^{-1};\ldots;a_1q^{-(\ell-1)},\ldots,a_{n_{\ell}}q^{-(\ell-1)}\}$, where $a_i\leq a_N$ for any $1\leq i\leq \ell$. Then there is a unique decomposition of $S$
into disjoint union of subsets:
$$
S=\bigsqcup_{i=1}^\ell S^i
$$
where $S^i$ is consisting of $q$-chains of length $i$ for any $1\leq i\leq \ell$, such that the following property holds:
\begin{itemize}
\item for any two subsets $S^i$ and $S^{j}$, there does not exist any $q$-chain in $S^{i}\sqcup S^{j}$ with length $>\max\{i,j\}$.
\end{itemize}
\end{lem}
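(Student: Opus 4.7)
The strategy is to reduce the problem to the separate decomposition of each $q$-equivalence class of elements of $S$, and then describe the unique decomposition of each class via ``maximal cyclic arcs'' in $\bbZ/\ell\bbZ$.

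For each $j\in\{1,\dots,N\}$, set $I_j:=\{i\in\{1,\dots,\ell\} \mid n_i\geq j\}$ and $S_j:=\{a_jq^{-(i-1)} \mid i\in I_j\}$; then $S=\bigsqcup_{j=1}^N S_j$. Because $a_1,\dots,a_N$ are pairwise not $q$-equivalent, the subsets $S_j$ lie in pairwise disjoint $q$-equivalence classes, so every $q$-chain contained in $S$ lies inside a single $S_j$. It therefore suffices to decompose each $S_j$ separately and take the disjoint union of the results.

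For fixed $j$, the bijection $a_jq^{-(i-1)}\leftrightarrow i$ identifies $S_j$ with $I_j\subseteq \bbZ/\ell\bbZ$, and $q$-chains contained in $S_j$ correspond exactly to cyclic arcs $\{s,s+1,\dots,s+k-1\}\pmod{\ell}$ with $1\leq k\leq \ell$. I will decompose $I_j$ into its maximal cyclic arcs (connected components with respect to the cyclic adjacency on $\bbZ/\ell\bbZ$): if $I_j=\bbZ/\ell\bbZ$, there is a single arc of length $\ell$; otherwise the nonempty complement $I_j^c$ separates $I_j$ into a canonical collection of maximal arcs $I_j=\bigsqcup_c A_{j,c}$. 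Pulling back yields the corresponding partition $S_j=\bigsqcup_c C_{j,c}$ into $q$-chains of lengths $k_{j,c}=|A_{j,c}|$. Setting $S^i:=\bigsqcup_{(j,c): k_{j,c}=i} C_{j,c}$ then produces a decomposition $S=\bigsqcup_{i=1}^\ell S^i$ in which each $S^i$ is a union of $q$-chains of length $i$. To verify the required property, I would argue by contradiction: a $q$-chain $D$ of length $>\max(i_1,i_2)$ inside $S^{i_1}\sqcup S^{i_2}$ lies in some $S_{j_0}$ and corresponds to a cyclic arc $A_D\subseteq I_{j_0}$; each element of $A_D$ sits in some $A_{j_0,c}$ with $k_{j_0,c}\in\{i_1,i_2\}$, so $|A_D|>\max(i_1,i_2)\geq k_{j_0,c}$ forces the consecutive arc $A_D$ to cross a gap, i.e., to contain an element of $I_{j_0}^c$, contradicting $A_D\subseteq I_{j_0}$. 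For uniqueness, given any decomposition $S=\bigsqcup_{i=1}^\ell T^i$ satisfying the property, the restriction to each $S_{j_0}$ partitions $I_{j_0}$ into arcs; the property forbids any two of these arcs to be cyclically adjacent (else their union is a strictly longer $q$-chain), so the partition must coincide with the maximal-arc decomposition, pinning down each $T^i$.

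The most delicate point is the cyclic ``wrapping'' phenomenon: since $q^{-\ell}=1$, the elements $a_jq^{-(\ell-1)}$ and $a_j$ are cyclically adjacent and a single $q$-chain can cross the boundary between levels $\ell$ and $1$. Working with arcs in $\bbZ/\ell\bbZ$ handles this uniformly, and the special case $I_j=\bbZ/\ell\bbZ$, in which the whole equivalence class forms a single $q$-chain of length $\ell$ that admits $\ell$ different sequence-presentations, causes no ambiguity because the lemma only asks for a set-theoretic partition.
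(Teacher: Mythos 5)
Your proof is correct and takes a genuinely different route from the paper. The paper constructs $S^\ell,\dots,S^1$ greedily from the top: first collect all $q$-chains of length $\ell$, then all of length $\ell-1$ in the remainder, and so on; it then asserts (without a spelled-out argument) that this decomposition satisfies the property, and proves uniqueness by a top-down induction whose key step --- that an $\ell$-chain scattered across the lower-index pieces violates the pairwise condition --- is left implicit. Your approach instead first splits $S$ into the $q$-equivalence classes $S_j$, identifies $q$-chains inside $S_j$ with cyclic arcs in $\bbZ/\ell\bbZ$, and takes maximal arcs (connected components of $I_j$). This yields the same decomposition, but it makes existence transparent (a consecutive arc exceeding every ambient maximal arc would have to cross a gap in $I_j^c$, contradicting containment in $I_j$) and uniqueness equally so (the pairwise property forbids two arcs from being cyclically adjacent, so the partition of each $I_j$ must be the maximal-arc one), and it handles the cyclic wrap-around case $I_j=\bbZ/\ell\bbZ$ cleanly. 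In short, your proof is more structural and supplies precisely the details the paper leaves to the reader.
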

\begin{proof}
First, let $m_\ell$ be the number of $q$-chains with length $\ell$ in $S$. Denote by $S^\ell$ the subset of $S$ consisting of the union of the $m_\ell$ $q$-chains of length $\ell$.
Denote by $S^{\ell-1}$ the subset of $S\setminus S^\ell$ consisting of the union of the $m_{\ell-1}$ $q$-chains of length $\ell-1$.
Inductively, we can use $S\setminus (S^\ell\cup S^{\ell-1})$ to define $S^{l-2}$, and so on. From this way, for each $1\leq i\leq \ell$, we obtain a subset $S^i$ of $S \setminus(\cup_{j=i+1}^\ell S^j)$ consisting of the union of the $m_{i}$ $q$-chains of length $i$.
This decomposition satisfies our requirement.

Let $S=\bigsqcup_{i=1}^\ell {}'S^i$ be another decomposition satisfies our requirement. First, $'S^\ell\subseteq S^\ell$ by definition. If there exists a $q$-chain $b,bq^{-1},\cdots, bq^{\ell-1}$ not in $'S^\ell$. Then $bq^{-j}\notin 'S^\ell$ for any $0\leq j\leq \ell-1$ by the condition on $a_1,a_2,\dots,a_N$. It follows that $\{b,bq^{-1},\cdots, bq^{\ell-1}\}\subset \bigsqcup_{i=1}^{\ell-1} S^i$, gives a contradiction to our requirement. So $'S^\ell=S^\ell$. Consider $S\setminus S^\ell$, inductively, one can prove that $'S^i=S^i$ for $1\leq i\leq \ell-1$.
So the decomposition is unique.
\end{proof}


In fact, the above lemma shows that there is a decomposition
\begin{equation}\label{eq:unique decomposition}
S=S_{\ell,1}\sqcup \cdots \sqcup S_{\ell,m_\ell}\sqcup S_{\ell-1,1}\sqcup \cdots\sqcup S_{\ell-1,m_{\ell-1}} \sqcup\cdots \sqcup S_{1,1} \sqcup\cdots \sqcup S_{1,m_1},
\end{equation}
where $S_{i,j}$ is a $q$-chain of length $i$ for each $1\leq i\leq \ell$, $1\leq j\leq m_i$ which satisfy that
for any two subsets $S_{i,j}$ and $S_{i',j'}$, there does not exist any $q$-chain in $S_{i,j}\sqcup S_{i',j'}$ with length $>\max\{i,i'\}$.
In particular, the sets $S_{\ell,1}, \cdots , S_{\ell,m_\ell}, S_{\ell-1,1}, \cdots, S_{\ell-1,m_{\ell-1}} ,\cdots , S_{1,1} ,\cdots , S_{1,m_1}$ are unique.

The sequence of numbers $m_1,\dots,m_\ell$ appearing in \eqref{eq:unique decomposition} (which is unique) is called to be \emph{associated} to the sequence of $n_1,\dots, n_\ell$, since they only depend on $n_1,\dots,n_\ell$ by the proof of Lemma \ref{lem:unique decomposition}.

Note that when $\ell=1$, we have $m_1=n_1$.

We give an example to show how to calculate the associated sequence of numbers.
\begin{ex}\emph{
Let $\ell=4$, and $(n_1,n_2,n_3,n_4)=(3,2,3,1)$.
For a set
$$
S=\{a_1,a_2, a_3; a_1q^{-1}, a_2q^{-1}; a_1q^{-2}, a_2q^{-2}, a_3q^{-2}; a_1q^{-3}\},
$$
such that $a_1,a_2,a_3$ are not $q$-equivalent pairwise,
we have the following:
\begin{align*}
&m_4=1\ \text{and}\  S_{4,1}=\{ a_1,a_1q^{-1},a_1q^{-2},a_1q^{-3}\};\\
&m_3=1\ \text{and}\  S_{3,1}=\{a_2,a_2q^{-1},a_2q^{-2}\};\\
&m_2=0;\\
&m_1=2 \ \text{and}\   S_{1,1}=\{a_3\}, S_{1,2}=\{a_3q^{-2}\}.
\end{align*}
}
\end{ex}

\subsection{The matrices $q$-commutative with diagonal matrices}
\label{subsec: q comm with diag}
Using notations as in (\ref{eq:unique decomposition}), choose $A^\diamond_{S_{ij}}\in\calA^\diamond_{S_{ij}}$ to be a matrix associated to the $q$-chain $S_{i,j}$ for any $1\leq j\leq m_i,1\leq i\leq \ell$.
Let
$$
A^\diamond_S=\diag(A^\diamond_{S_{\ell,1}},A^\diamond_{S_{\ell,2}},\cdots, A^\diamond_{S_{\ell,m_\ell}}; A^\diamond_{S_{\ell-1,1}},A^\diamond_{S_{\ell-1,2}},\cdots,
A^\diamond_{S_{\ell-1,m_{\ell-1}}};\cdots; A^\diamond_{S_{1,1}},A^\diamond_{S_{1,2}}\cdots, A^\diamond_{S_{1,m_1}} ).
$$


By definition, for any $A'_S\in \calA_S$, there exists an invertible matrix $g$ such that $g^{-1}A'_Sg=A^\diamond_S$.

\begin{df}
\label{def:RqA}
For any $A\in M_{n\times  n}(\bbC)$, define
\begin{align}
\scrR_{q,A}:=\{B\in M_{n\times n}(\bbC)\mid AB=qBA\}.
\end{align}
\end{df}

Any $B\in \scrR_{q,A^\diamond_S}$ has the following form:
\[
\diag(B_{\ell,1},\ldots,B_{\ell,m_\ell};B_{\ell-1,1},\ldots,B_{\ell-1,m_{\ell-1}};\ldots;B_{1,1},\ldots,B_{1,m_1})
\]
such that $A^\diamond_{S_{i,j}}B_{ij}=qB_{ij}A^\diamond_{S_{i,j}}$
for any $1\leq j\leq m_i$ and $1\leq i\leq \ell$. And then
\begin{align}\label{a,aqmatrix}\tiny B_{\ell,k}=
\begin{bmatrix}
0             & b^{(1)}_{\ell,k} &               &         &                 \\
0             &     0         & b^{(2)}_{\ell,k} &         &             \\
              &               & \ddots        & \ddots  &               \\
0             &     0         &               & 0       &b^{(\ell-1)}_{\ell,k}    \\
b^{(\ell)}_{\ell,k} &     0         &  \ldots       &   0     &0
\end{bmatrix}\tiny \in M_{\ell\times\ell}(\bbC)\  \text{\small and}\  & B_{ij}= \tiny
\begin{bmatrix}
0 & b^{(1)}_{i,j} &               &         &                 \\
  &     0         & b^{(2)}_{i,j} &         &             \\
  &               & \ddots        & \ddots  &               \\
  &               &               & 0       &b^{(i-1)}_{i,j}    \\
  &               &               &         &0
\end{bmatrix}\tiny \in M_{i\times i}(\bbC),
\end{align}
for any $1\leq k\leq m_\ell$, $1\leq i\leq \ell-1$ and $1\leq j\leq m_i$.

Let $\calU_{A_S^\diamond}$ be the subset of $\scrR_{q,A^\diamond_{S}}$
consisting of $B$ with $$\rank(B_{\ell,k})=\ell,\  \text{and} \ \rank(B_{i,j})=i-1\text{ for any } 1\leq k\leq \ell, 1\leq j\leq m_i \
\text{and} \ 1\leq i\leq \ell-1.$$
From the form of $B$, we see that
$\calU_{A_S^\diamond}$ is a dense open subset.

For any $B\in \calU_{A_S^\diamond}$, we have
\begin{align}
\label{eqn: 1}
\rank(B^i)=\ell m_\ell+ \sum_{j=i+1}^{\ell-1} (j-i)m_{j},
\end{align}
for any $0\leq i\leq \ell-1$.
On the other hand,
let
$$
A_S=\diag(a_1,\ldots,a_{n_{1}}; a_1q^{-1},\ldots,a_{n_{2}}q^{-1}; \ldots; a_1q^{-(\ell-1)},\ldots,a_{n_{\ell}}q^{-(\ell-1)}).
$$
By Proposition \ref{QLBmatrix}, any $B'\in \scrR_{q, A_S}$ is of the following form:
\begin{align*}
B'=
\begin{bmatrix}
0    & B'_1   &          &        &        \\
0    &  0     & B'_2     &        &        \\
0    &        & \ddots   & \ddots &         \\
0    &  0     &  \ldots  & 0      & B'_{\ell-1} \\
B'_\ell &  0     & \ldots   & 0      & 0        \\
\end{bmatrix},
\end{align*}
where
\begin{align}
\label{eqn: Bi0}
B'_i=
\begin{bmatrix}
b_{i,1}   &          &          &             &  0 & \ldots & 0      \\
          & b_{i,2}  &          &             &  0 &\ldots  & 0    \\
          &          & \ddots   &             &  0 &\ldots  & 0       \\
          &          &          & b_{i,n_i}   &  0 & \ldots & 0 \\
\end{bmatrix}_{n_i\times n_{i+1}} \ \text{if}\  n_i\leq n_{i+1}, \
\end{align}
\begin{align}
\label{eqn: Bi1}
\text{or} \  & B'_{i}=
\begin{bmatrix}
b_{i,1}   &          &          &                   \\
          & b_{i,2}  &          &                \\
          &          & \ddots   &                    \\
          &          &          & b_{i,n_{i+1}}   \\
    0     &0         & \ldots   & 0               \\
 \vdots   &\vdots    &          &\vdots   \\
    0     &0         & \ldots   & 0              \\
\end{bmatrix}_{n_i\times n_{i+1}} \ \text{if}\  n_i\geq n_{i+1}
\end{align}
for any $1\leq i\leq \ell$. Here we set $n_{\ell+1}=n_1$.
Then $\rank(B'_i)\leq\min\{n_i,n_{i+1}\}$ for any $1\leq i\leq \ell$.

One can obtain the form of $(B')^i$ for $i\geq 2$ by a simple calculation, in particular,
\begin{align*}(B')^2=
\begin{bmatrix}
             &          & B'_1B'_2  &            &               &          \\
             &          &           & B'_2B'_3   &               &        \\
             &          &           &            & \ddots        &                      \\
             &          &           &            &               & B'_{\ell-2}B'_{\ell-1}     \\
B'_{\ell-1}B_\ell &          &           &            &               &         \\
             &B'_\ell B'_1  & \ldots    &            &                &         \\
\end{bmatrix},\cdots,
\end{align*}
\[(B')^\ell=
\diag(B'_1B'_2\cdots B'_\ell; B'_2B'_3\cdots B'_\ell B'_1; \ldots, B'_\ell B'_1B'_2\cdots B'_{\ell-1}).
\]
Then we have
$$\rank(B^i)\leq \sum_{j=1}^\ell \min\{n_j,n_{j+1},\dots,n_{j+i}\}$$
for any $B\in \scrR_{q,A_S}$,  $0\leq i\leq \ell-1$; and $\rank(B^i)\leq \ell\min\{n_1,n_2,\ldots,n_\ell\}$ for any $i\geq \ell$.

Let $\calU_{A_S}'$ be the subset of $\scrR_{q,A_{S}}$
consisting of $B$ with
\begin{align}
\label{eqn: 2}
\left\{ \begin{array}{ll}\rank(B^i)= \sum_{j=1}^\ell \min\{n_j,n_{j+1},\dots,n_{j+i}\}, \,\,\forall 1\leq i\leq \ell-1,\\
\rank(B^i)= \ell\min\{n_1,n_2,\ldots,n_\ell\},\,\,\forall i\geq \ell.\end{array}\right.
\end{align}
Then $\calU_{A_S}'$ is a dense open set of
$\scrR_{q,A_S}$.


Note that there exists a permutation matrix $P$ such that  $P^{-1}A^\diamond_{S}P=A_S$. Denote by $$P^{-1}\calU_{A^\diamond_S} P:=\{P^{-1}BP\mid B\in \calU_{A^\diamond_S}\}.$$
{Then $P^{-1}\calU_{A^\diamond_S}P$ and $\calU_{A_S}'$ are dense open set of $\scrR_{q,A_S}$. So $P^{-1}\calU_{A^\diamond_S}P\cap\calU_{A_S}'$ is a dense open set of $\scrR_{q,A_S}$. }
We have the following lemma by comparing \eqref{eqn: 1} and \eqref{eqn: 2}.



\begin{lem}\label{lem:asso number}
Given a sequence $n_1,\ldots,n_\ell$.
Let $m_1,\ldots,m_\ell$ be its associated sequence. 
Then we have
\begin{align}
\label{eqn:m}
\ell m_\ell+ \sum_{j=i+1}^{\ell-1} (j-i)m_{j}= \sum_{j=1}^\ell \min\{n_j,n_{j+1},\dots,n_{j+i}\}\,\,
\end{align}
for any $0\leq i\leq \ell-1$.
Moreover, $m_1,\ldots,m_\ell$ are uniquely determined by \eqref{eqn:m}.
\end{lem}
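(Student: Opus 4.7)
The plan is to extract the identity \eqref{eqn:m} by computing the rank of $B^i$ for a generic $B \in \scrR_{q,A_S}$ in two different ways, and to get uniqueness by observing that the resulting linear system in the $m_j$ is triangular.

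First I would record that $\scrR_{q,A_S}$ is an affine linear subspace of $M_{n\times n}(\bbC)$ (cut out by the linear equations $A_S X = q X A_S$), so it is in particular irreducible. The preceding paragraphs already exhibit two dense open subsets of $\scrR_{q,A_S}$: namely $P^{-1}\calU_{A_S^\diamond}P$, obtained by conjugating the open subset $\calU_{A_S^\diamond}$ of $\scrR_{q,A_S^\diamond}$ by the permutation matrix $P$ with $P^{-1}A_S^\diamond P = A_S$, and $\calU'_{A_S}$. By irreducibility, $P^{-1}\calU_{A_S^\diamond}P \cap \calU'_{A_S}$ is itself a nonempty dense open subset of $\scrR_{q,A_S}$.

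Next I would pick any $B$ in this intersection and compute $\rank(B^i)$ two ways. Viewing $PBP^{-1} \in \calU_{A_S^\diamond}$, the explicit block-diagonal form in \eqref{a,aqmatrix} together with the full-rank conditions defining $\calU_{A_S^\diamond}$ yields the formula \eqref{eqn: 1}, and since rank is conjugation-invariant this equals $\rank(B^i)$. On the other hand, viewing $B \in \calU'_{A_S}$, the explicit cyclic form of $B$ built from the blocks $B'_1,\ldots,B'_\ell$ as in \eqref{eqn: Bi0}--\eqref{eqn: Bi1}, together with the rank conditions \eqref{eqn: 2} defining $\calU'_{A_S}$, gives the right-hand side of \eqref{eqn:m}. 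Equating these two expressions establishes \eqref{eqn:m} for every $0\leq i\leq \ell-1$.

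For the uniqueness statement I would argue that \eqref{eqn:m} is a triangular linear system in the unknowns $m_\ell, m_{\ell-1}, \ldots, m_1$: the equation at $i=\ell-1$ reads $\ell m_\ell = \sum_j\min\{n_j,\ldots,n_{j+\ell-1}\}$ and determines $m_\ell$; the equation at $i=k$ involves only $m_{k+1},\ldots,m_\ell$, with $m_{k+1}$ appearing with coefficient $1$, so once $m_{k+2},\ldots,m_\ell$ are known one solves for $m_{k+1}$; iterating down to $i=0$ recovers $m_1$. Hence the $m_j$'s are uniquely determined by \eqref{eqn:m}.

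The only nontrivial point is Step~1, i.e.\ making sure that a single matrix $B$ sits simultaneously in the two dense open subsets so that both rank formulas apply to it; once irreducibility of the affine subspace $\scrR_{q,A_S}$ is noted this is automatic. The rank computations themselves are routine inspections of the explicit block forms already written out in Section~\ref{subsec: q comm with diag}, so no further obstacle is expected.
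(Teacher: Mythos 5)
Your proposal matches the paper's argument: the paper likewise establishes \eqref{eqn:m} by observing that $P^{-1}\calU_{A^\diamond_S}P\cap\calU_{A_S}'$ is dense open in the (irreducible) linear subspace $\scrR_{q,A_S}$ and comparing the two rank formulas \eqref{eqn: 1} and \eqref{eqn: 2} on a common element. Your triangular back-substitution argument for uniqueness is also correct and in fact supplies a detail the paper leaves implicit.
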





\section{The variety $\scrK_{q,n}$ is covered by the subvarieties $\scrK_{q,(\bfm,\bfr)}$}
\label{sec: cover}

We shall prove that $\scrK_{q,n}$ is the union of its subvarieties $\scrK_{q,(\bfm,\bfr)}$, where the pair of vectors $(\bfm,\bfr)$ run over the indexing set $\ML_{q,n}$ \eqref{ML}.

\subsection{$\scrR_{q,A}$ for diagonalizable $A$ with eigenvalues $aq^{-i}$ }

We starting with diagonalizable matrices. Recall \begin{align*}
(b;i)_x=\diag(b,bx,bx^2,\ldots, bx^{i-1}),\,\,\forall i\geq1.
\end{align*}

\begin{prop}\label{Prop:QCJor}
Let
$$
A=\diag\big(\underbrace{a,\ldots,a}_{n_1},\underbrace{aq^{-1}, \ldots, aq^{-1}}_{n_2},\ldots,\underbrace{aq^{-(\ell-1)},\ldots,aq^{-(\ell-1)}}_{n_{\ell}}\big),
$$
where $a\in\bbC^*$.
Then for any $B\in\scrR_{q,A}$, we have
$(A,B)\in \ov{\scrD_1^{\oplus m_1}\oplus \scrD_2^{\oplus m_2} \oplus \cdots\oplus\scrD_\ell^{\oplus m_\ell}}$,
where the multiset $m_1,\dots,m_\ell$ is associated to the multiset $n_1,\dots,n_\ell$.
\end{prop}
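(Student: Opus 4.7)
The plan is to identify a dense open subset of $\scrR_{q,A}$ on which $(A,B)$ is manifestly a $\GL_n$-conjugate of a direct sum of generic elements indexed by the $q$-chain decomposition of the eigenvalue multiset of $A$, and then extend to all of $\scrR_{q,A}$ via closure.

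First I would apply the $q$-chain machinery of \S\ref{sec: further prel} to the multiset $S$ of eigenvalues of $A$. By Lemma \ref{lem:unique decomposition}, $S$ admits a unique decomposition into $q$-chains, with $m_i$ chains of length $i$ for $1\leq i\leq \ell$; these are precisely the associated integers of Lemma \ref{lem:asso number}. Let $A^\diamond_S$ be the block-diagonal rearrangement of $A$ whose diagonal blocks $A^\diamond_{S_{i,j}}$ run over the $q$-chains in this decomposition, and choose a permutation matrix $P$ with $PAP^{-1}=A^\diamond_S$.

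For $B$ in the dense open subset $\calU:=\calU'_{A_S}\cap P^{-1}\calU_{A^\diamond_S}P$ of $\scrR_{q,A}$ constructed in \S\ref{subsec: q comm with diag}, the conjugate $B^\diamond:=PBP^{-1}$ lies in $\calU_{A^\diamond_S}$ and is therefore block diagonal with blocks $B_{i,j}$ of the form \eqref{a,aqmatrix} satisfying the maximal rank conditions ($\rank B_{\ell,k}=\ell$ for length-$\ell$ chains, $\rank B_{i,j}=i-1$ for length-$i<\ell$ chains). Direct inspection against Definition \ref{rem: calD calN} shows that each block pair $(A^\diamond_{S_{i,j}},B_{i,j})$ lies in $U_{q,i}$. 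Therefore
\begin{equation*}
(A^\diamond_S,B^\diamond)\;\in\;U_{q,\ell}^{\oplus m_\ell}\oplus U_{q,\ell-1}^{\oplus m_{\ell-1}}\oplus\cdots\oplus U_{q,1}^{\oplus m_1}\;\subseteq\;\scrK_{q,(\bfm,\vec 0)}
\end{equation*}
with $\bfm=(m_1,\ldots,m_\ell)$. Since $(A,B)$ and $(A^\diamond_S,B^\diamond)$ are $\GL_n$-conjugate via $P$, and since each irreducible component of $\scrK_{q,n}$ is $\GL_n$-invariant (the connected group $\GL_n$ must fix each of the finitely many components, and $\scrK_{q,(\bfm,\vec 0)}$ is a component by Corollary \ref{cor: irreducible}), we conclude $(A,B)\in\scrK_{q,(\bfm,\vec 0)}=\ov{\scrD_{q,1}^{\oplus m_1}\oplus\cdots\oplus\scrD_{q,\ell}^{\oplus m_\ell}}$ for every $B\in\calU$. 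To finish, the set $\{B\in\scrR_{q,A}:(A,B)\in\ov{\scrD_{q,1}^{\oplus m_1}\oplus\cdots\oplus\scrD_{q,\ell}^{\oplus m_\ell}}\}$ is the preimage of a closed set under the continuous map $B\mapsto(A,B)$, hence closed in $\scrR_{q,A}$; containing the dense open $\calU$, it must equal all of $\scrR_{q,A}$.

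The main obstacle lies in the middle step: one must know that $\calU'_{A_S}\cap P^{-1}\calU_{A^\diamond_S}P$ is genuinely dense in $\scrR_{q,A}$, i.e., that the maximal rank conditions imposed on the eigenvalue-grouped side and those imposed on the $q$-chain-grouped side can be simultaneously satisfied by a generic $B$. This reduces to the rank identities of Lemma \ref{lem:asso number}, which equate the rank counts on the two sides precisely through the associated sequence $m_1,\ldots,m_\ell$; this matching is what guarantees that the intersection of the two a priori unrelated open conditions remains dense, and is the geometric input that drives the whole argument.
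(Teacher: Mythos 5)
Your argument breaks at the claim that $B^\diamond:=PBP^{-1}$ is block diagonal with respect to the $q$-chain grouping. The sets $\calU_{A_S^\diamond}$ and $\calU'_{A_S}$ from \S\ref{subsec: q comm with diag} are built under the standing hypothesis of that subsection that the base points $a_1,\ldots,a_N$ of the $q$-chains lie in \emph{distinct} $q$-orbits; it is precisely that hypothesis, via Proposition~\ref{QCmatrix}, which forces any $B\in\scrR_{q,A_S^\diamond}$ to be block diagonal across chains. In Proposition~\ref{Prop:QCJor}, however, every eigenvalue of $A$ lies in the single $q$-orbit $\{a,aq^{-1},\ldots,aq^{-(\ell-1)}\}$, so after rearranging $A$ into $A_S^\diamond=\diag(A^\diamond_{S_{\ell,1}},\ldots,A^\diamond_{S_{1,m_1}})$ one finds $q$-adjacent eigenvalues sitting in \emph{different} chain blocks. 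Proposition~\ref{QCmatrix} then allows (and generically demands) nonzero entries of $B^\diamond$ between those blocks, so $\scrR_{q,A_S^\diamond}$ is not a block-diagonal family and the intersection $\calU'_{A_S}\cap P^{-1}\calU_{A_S^\diamond}P$ that you propose is not a dense open subset of $\scrR_{q,A}$; in fact the block-diagonal locus you need is a proper closed subvariety. A minimal example: $\ell=2$, $n_1=2$, $n_2=1$, so $A=\diag(a,a,aq^{-1})$, $m_1=m_2=1$, $A_S^\diamond=\diag(a,aq^{-1},a)$; then $\scrR_{q,A_S^\diamond}$ contains matrices with nonzero $(2,3)$ and $(3,2)$ entries bridging the two chain blocks, and a generic element is not block diagonal. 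The direct-sum structure $(A_S^\diamond,B^\diamond)\in U_{q,\ell}^{\oplus m_\ell}\oplus\cdots\oplus U_{q,1}^{\oplus m_1}$ you deduce therefore fails for a generic $B$.

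The paper avoids this obstruction by looking at the Jordan form of $B$ rather than trying to diagonalize the situation on the $A$-side. For $B$ in a dense open $\calV'\subset\scrR_{q,A}$ (distinct nonzero eigenvalues and maximal ranks of powers), Lemmas~\ref{lem:asso number} and~\ref{lem:invereigen} pin down the Jordan form of $B$ as $\diag\big((b_1;\ell)_q,\ldots,(b_{m_\ell};\ell)_q,J_{\ell-1}^{m_{\ell-1}}(0),\ldots,J_1^{m_1}(0)\big)$; one then applies the switch $\theta:(A,B)\mapsto(B,A)$ of Lemma~\ref{lem:switch}, reducing to the case where the first coordinate is nilpotent plus $q$-chain diagonal, which is handled by Lemma~\ref{lem:nilpotent1} and Remark~\ref{remark oplus}. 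That switch is the crucial move missing from your argument, and it is what makes the $q$-chain count $m_1,\ldots,m_\ell$ materialize without ever having to realize a block-diagonal splitting of $B$ compatible with the chain decomposition of $A$.
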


\begin{proof}
By Proposition \ref{QLBmatrix}, any $B\in \scrR_{q, A}$ is of the following form:
\begin{align*}
B=
\begin{bmatrix}
0    & B_1   &          &        &        \\
0    &  0     & B_2     &        &        \\
\vdots    &        & \ddots   & \ddots &         \\
0    &  0     &  \ldots  & 0      & B_{\ell-1} \\
B_\ell &  0     & \ldots   & 0      & 0        \\
\end{bmatrix},
\end{align*}
where
$B_i\in M_{n_i\times n_{i+1}}(\bbC)$ for any $1\leq i\leq \ell$.
%

Similar to $\calU'_{A_S}$ defined in \S\ref{subsec: q comm with diag}, let $\calV$ be a subset of $\scrR_{q,A}$ which consists of the matrices $B$ such that
$$\rank(B^i)=\sum_{j=1}^\ell \min\{n_j,n_{j+1},\dots,n_{j+i}\}$$
for any $0\leq i\leq \ell-1$, and $\rank(B^i)= \rank(B^\ell)=\ell\min\{n_1,n_{2},\dots,n_{\ell}\}$ for any $i\geq \ell$. Then $\calV$ is a dense open subset of $\scrR_{q,A}$.

For any $B\in\calV$, Lemma \ref{lem:asso number} infers that
\begin{align}
\label{eqn: vecm}
\rank(B^i)= \ell m_\ell+ \sum_{j=i+1}^{\ell-1} (j-i)m_{j}
\end{align}
for any $0\leq i\leq \ell-1$. Then
$$\rank(B^i)-\rank(B^{i+1})=m_{\ell-1}+\cdots+ m_{i+1},$$
which shows that there exist $m_{i}$ Jordan blocks of the form $J_i(0)$ in the Jordan normal form of $B$ for $1\leq i\leq \ell-1$. As $\rank(B^i)= \rank(B^\ell)=\ell\min\{n_1,n_{2},\dots,n_{\ell}\}=\ell m_\ell$ for any $i\geq \ell$, we obtain that there does not exist any Jordan block of the form $J_i(0)$ with $i\geq \ell$ in the Jordan normal form of $B$. Furthermore, $B$ has $\ell m_\ell$ nonzero eigenvalues.

Let $\bfm=(m_1,m_2,\ldots, m_{\ell})\in\bbN^{\ell}$. Then $\|\bfm\|=n$.

Let $\calV'$ be the subset of $\calV$ consisting of all matrices $B$ with $\ell m_\ell$ distinct nonzero eigenvalues. Then $\calV'$ is a dense open set of $\scrR_{q,A}$ since $\scrR_{q,A}$ is irreducible.
%
For any $B\in\calV'$, Lemma \ref{lem:invereigen} yields that the Jordan normal form of $B$ is
\begin{eqnarray*}
\diag\big((b_1;\ell)_q, (b_2;\ell)_q,\ldots,(b_{m_\ell};\ell)_q, J^{m_{\ell-1}}_{\ell-1}(0),J^{m_{\ell-2}}_{\ell-2}(0),\cdots,J^{m_1}_1(0)\big),
\end{eqnarray*}
where $b_1,\dots,b_{m_\ell}$ are not $q$-equivalent.

Hence, we have $(B,A)\in\scrK_{q^{-1},((0,\dots,0,m_\ell),(m_1,\dots,m_{\ell-1}))}$ for any $B\in\calV'$ by Lemma \ref{lem:nilpotent1} and Remark \ref{remark oplus}. It follows from Lemma \ref{lem:switch} that
 $(A,B)\in \scrK_{q,(\bfm,\vec{0})}$ for any $B\in\calV'$, and then for any $B\in \ov{\calV'}=\scrR_{q,A}$.
 \end{proof}

\subsection{$\scrR_{q,A}$ for general $A$ with eigenvalues $aq^{-i}$ }

For any vector $\vec{a}=(a_1,\dots,a_n)$,  let $J_n(\vec{a})$ denote the following matrix
\begin{align*}
J_n(\vec{a})=&
\begin{bmatrix}
 a_1     &    1                                           \\
         &a_2    &1                                        \\
         &       &   \ddots &  \ddots                      \\
         &       &          &\ddots      &  1             \\
         &       &          &            &a_n
\end{bmatrix}.
\end{align*}
In particular, if $\vec{a}=(\overbrace{a,\dots,a}^n)$,
then $J_n(\vec{a}) =J_n(a)$ is the $n\times n$ Jordan block of eigenvalue $a$.

As a generalization of Lemma~\ref{Jordan q-layer}, we obtain the following two lemmas, which will be used (only) in the proof of Proposition~\ref{prop:block a,aq^{-(l-1)}} below.

\begin{lem}\label{lem:QC1a}
For any $\vec{a}=(a_1,\dots, a_{m})$ and $\vec{b}=(a_{m+1},\dots,a_{m+n})$ with $a_i\neq qa_{m+j}$ for all $1\leq i\leq m, 1\leq j\leq n$, if
the matrix $B=(b_{ij})_{m\times n}$ satisfies that $J_m(\vec{a})B-qBJ_n(\vec{b})=0$, then $B=0$.
\end{lem}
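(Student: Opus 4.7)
My plan is to reduce the matrix equation to a scalar recurrence on the entries $b_{ij}$ of $B$ and then to clear out these entries by a suitably ordered induction.

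First, I would decompose the generalized Jordan blocks as $J_m(\vec a) = D_a + N_m$ and $J_n(\vec b) = D_b + N_n$, where $D_a = \diag(a_1,\dots,a_m)$, $D_b = \diag(a_{m+1},\dots,a_{m+n})$, and $N_m, N_n$ are the nilpotent matrices with ones on the superdiagonal. Substituting into $J_m(\vec a)B = qBJ_n(\vec b)$ and reading off the $(i,j)$-entry gives
\[
a_i b_{ij} + b_{i+1,j} \;=\; q\,a_{m+j} b_{ij} + q\,b_{i,j-1},
\]
with the boundary conventions $b_{m+1,j}=0$ and $b_{i,0}=0$. Rearranging yields the key recurrence
\[
(a_i - q a_{m+j})\,b_{ij} \;=\; q\,b_{i,j-1} - b_{i+1,j}.
\]
By the hypothesis $a_i \neq q a_{m+j}$, the coefficient on the left is invertible, so this equation \emph{determines} $b_{ij}$ uniquely in terms of $b_{i,j-1}$ and $b_{i+1,j}$.

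Second, I would perform a double induction: the outer induction runs on $i$ from $i=m$ down to $i=1$, and the inner induction runs on $j$ from $j=1$ up to $j=n$. For the base case $i=m$, $j=1$, both right-hand terms vanish by the boundary conventions, so $b_{m,1}=0$. Inductively on $j$, once $b_{m,j-1}=0$ is known and using $b_{m+1,j}=0$, the recurrence forces $b_{m,j}=0$, completing the row $i=m$. For the inductive step $i<m$, assuming the rows $i+1,\dots,m$ are all zero, the term $b_{i+1,j}$ on the right vanishes; then an inner induction on $j$ (with the $j=0$ boundary giving $b_{i,0}=0$) yields $b_{i,j}=0$ for every $j$. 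This collapses the entire matrix $B$ to zero.

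I do not anticipate a real obstacle here; the argument is a direct generalization of the scalar Sylvester-type vanishing behind Lemma~\ref{Jordan q-layer}, and the only thing to be careful about is the ordering of the induction (rows bottom-to-top, columns left-to-right) so that both quantities on the right-hand side of the recurrence are already known to vanish when $b_{ij}$ is examined. Everything else is a bookkeeping calculation.
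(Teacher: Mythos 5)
Your proof is correct and is essentially the same argument the paper intends: both derive the entrywise relation $(a_i - q a_{m+j})\,b_{ij} = q\,b_{i,j-1} - b_{i+1,j}$ from the equation $J_m(\vec a)B = qBJ_n(\vec b)$ and use the invertibility of $a_i - q a_{m+j}$ to propagate vanishing. The only difference is that the paper writes out the matrix of entries and delegates the finishing step to \cite[Lemma 4.2]{CW18}, whereas you spell out the induction (rows bottom-to-top, columns left-to-right) in full, which makes the argument self-contained; incidentally the paper's displayed matrix carries small sign/coefficient typos (e.g.\ the cross terms should appear as $-q\,b_{i,j-1}$ rather than $+b_{i,j-1}$), and your derivation gets these right.
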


\begin{proof}
Let $B=(B_{ij})$ be an arbitrary $m\times n$  matrix such that
\begin{align*}
&0=J_m(\vec{a})B-qBJ_n(\vec{b})=
\\
&\begin{tiny}\begin{bmatrix}
(a_1-qa_{m+1})b_{1,1}+b_{2,1}                 &  (a_1-qa_{m+2})b_{1,2}+b_{2,2}+b_{1,1}                  &    ...       & (a_1-qa_{m+n})b_{1,n}+b_{2,n}+b_{1,n-1}              \\
(a_2-qa_{m+1})b_{2,1}+b_{3,1}                  &   (a_2-qa_{m+2})b_{1,2}+b_{3,2}+b_{2,1}                 &    ...       & (a_2-qa_{m+n})b_{2,n}+b_{3,n}+b_{2,n-1}             \\
\vdots                                       &  \vdots                                                &              &   \vdots            \\
(a_{m-1}-qa_{m+1})b_{m-1,1}+b_{m,1}           &  (a_{m-1}-qa_{m+2})b_{m-1,2}+b_{m,2}+b_{m-1,1}          &    ...       & (a_{m-1}-qa_{m+n})b_{m-1,n}+b_{m,n}+b_{m-1,n-1}    \\
(a_m-qa_{m+1})b_{m,1}                         & (a_m-qa_{m+2})b_{m,2} +b_{m,1}                          &    ...       & (a_m-qa_{m+n})b_{m,n} +b_{m,n-1}
\end{bmatrix}
\end{tiny}.
\end{align*}
Since $a_i\neq qa_{m+j}$ for all $i,j$, the argument as for the anti-commuting matrices in \cite[Lemma 4.2]{CW18} also works here.
\end{proof}

The proof of the following lemma is straightforward, and we omit it here, cf. \cite[Lemma 4.3]{CW18}.

\begin{lem}\label{lem:QC2a}
For any $m,n\geq 1$, let $a_1,\dots,a_{\max (m,n)}\in \bbC^*$ with $(a_i)^\ell\neq  (a_j)^\ell$ for all $i\neq j$. Let
$\vec{a}=(a_1,\dots, a_{m})$ and $\vec{a}'=(a_1q^{-1},\dots,a_{n}q^{-1})$.
Then an $m\times n$ matrix $B=(b_{ij})$ satisfies $J_m(\vec{a})B-qBJ_n(\vec{a}')=0$ if and only if
$B$ is one of the two forms \eqref{eq:B1}--\eqref{eq:B2}:
\begin{eqnarray}
  \label{eq:B1}
B=\begin{bmatrix}
 b_{1,1}    & b_{1,2}          & \ldots                 &      b_{1,n-1}              &  b_{1,n}      \\
   0        & b_{2,2}          & \ldots                 &      b_{2,n-1}              &  b_{2,n}      \\
   0        & 0                & \ddots                 &      b_{3,n-1}              &  b_{3,n}      \\
\vdots      &                  &  \ddots                &      \ddots                 &     \vdots          \\
0           &  0               &                        &        0                    &  b_{n,n}       \\
0           &  0               & \ldots                 &        0                    &  0            \\
\vdots      & \vdots           &                        &         \vdots              & \vdots        \\
0           &  0               & \ldots                 &           0                 &  0
\end{bmatrix}
 \quad \text{if }  m\geq n,
\end{eqnarray}
where $b_{i,j}=qb_{i-1,j-1}-(a_{i-1}-a_j)b_{i-1,j}$  for $2\leq i\leq j\leq n$
and $b_{1,j}$ are arbitrary for $1\leq j\leq n$;
or
\begin{eqnarray}
  \label{eq:B2}
B=\begin{bmatrix}
b_{1,1} &  b_{1,2} &  \ldots  &  b_{1,m-1} & b_{1,m}    &  b_{1,m+1}    & \ldots        &  b_{1,n}      \\
  0     &  b_{2,2} &  \ldots  &  b_{2,m-1} & b_{2,m}    & b_{2,m+1}     & \ldots        &  b_{2,n}    \\
0       &  0       &  \ddots  &  b_{3,m-1} & b_{3,m}    & b_{3,m+1}     & \ldots        &   b_{3,n} \\
        &          &  \ddots  &  \ddots    & \vdots    &  \vdots        &               & \vdots\\
0       &  0       &          &     0      & b_{m,m}    & b_{m,m+1}     & \ldots        &   b_{m,n}
\end{bmatrix}
\quad \text{if }  m\leq n,
\end{eqnarray}
where $b_{1,j}$ are arbitrary for $n-m+1\le j \le n$ and
\begin{eqnarray*}
  \begin{cases}
  b_{i,n-m+j}=qb_{i-1,n-m+j-1}-(a_{i-1}-a_{n-m+j})b_{i-1,n-m+j}       &  \text{if} \  2\leq i\leq j\leq m, \\
  qb_{m,j}=(a_{m}-a_{j+1})b_{m,j+1}                                    &  \text{if} \  m\leq j\leq n-1,       \\
  qb_{i,j}=b_{i+1,j+1}+(a_{i}-a_{j+1})b_{i,j+1},                        &  \text{if} \   1\leq i\leq m-1, i\leq j\leq n-m-1+i.
  \end{cases}
\end{eqnarray*}
\end{lem}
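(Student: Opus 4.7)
My plan is to prove the lemma by directly expanding the matrix equation $J_m(\vec{a})B - qBJ_n(\vec{a}') = 0$ entry by entry, in the same spirit as \cite[Lemma 4.3]{CW18} for the anti-commuting ($q=-1$) case. Writing $B=(b_{ij})$ and using that $J_m(\vec{a})$ has diagonal $a_1,\dots,a_m$ with $1$'s on the superdiagonal (and analogously $J_n(\vec{a}')$ has diagonal $a_1q^{-1},\dots,a_nq^{-1}$), I obtain the scalar relations
\[
b_{i+1,j} \;=\; q\,b_{i,j-1} \,+\, (a_j-a_i)\,b_{i,j}, \qquad 1\le i\le m-1,\ 1\le j\le n,
\]
with boundary convention $b_{i,0}:=0$, together with the ``bottom row'' equations
\[
0 \;=\; q\,b_{m,j-1} \,+\, (a_j-a_m)\,b_{m,j},\qquad 1\le j\le n,
\]
arising from the absence of a row $m+1$. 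The hypothesis $a_i^\ell\neq a_j^\ell$ for $i\neq j$ gives in particular $a_i\neq a_j$, which suffices to invert these recurrences and to ensure the solutions exhaust exactly the two stated forms.

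For the case $m\ge n$, I first show by induction on $i$ that $b_{ij}=0$ whenever $i>j$: the base is the convention $b_{i,0}=0$, and the inductive step uses the first relation at positions $(i,j)$ with $i>j$, since then both $b_{i-1,j-1}$ and $b_{i-1,j}$ vanish by the inductive hypothesis (with careful handling at $j=1$). This forces the strictly lower-triangular block to vanish. The upper-triangular entries $b_{ij}$ for $2\le i\le j\le n$ are then determined by the same recurrence, giving exactly the formula $b_{i,j}=qb_{i-1,j-1}-(a_{i-1}-a_j)b_{i-1,j}$ in \eqref{eq:B1}, while the $b_{1,j}$ for $1\le j\le n$ remain free; continuing the induction past row $n$ shows rows $n+1,\dots,m$ vanish identically, and the bottom-row equations collapse to $0=0$.

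For the case $m\le n$, the strategy is parallel but the bottom-row equations carry genuine content. The same induction on $i$ again gives $b_{ij}=0$ for $i>j$, producing the sub-diagonal zeros in \eqref{eq:B2}. Next I analyze the bottom row: rewriting the constraint as $b_{m,j-1}=q^{-1}(a_m-a_j)\,b_{m,j}$ and decreasing $j$ from $j=m$, the factor $a_m-a_m=0$ at $j=m$ kills the chain and forces $b_{m,j}=0$ for $j<m$; for $j\ge m$ this same relation reproduces the second listed recurrence $qb_{m,j}=(a_m-a_{j+1})b_{m,j+1}$. Propagating upward, solving the main recurrence for $b_{i,j}$ in terms of $b_{i+1,j+1}$ and $b_{i,j+1}$ yields the third listed relation, and the relation for $b_{i,n-m+j}$ comes from the main recurrence after an index shift; the free parameters are precisely $b_{1,j}$ for $n-m+1\le j\le n$.

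The main obstacle I anticipate is purely clerical: juggling the index ranges and corner cases ($i=1$, $j=1$, $i=m$, $j=n$) across the two sub-cases. Conceptually nothing beyond $a_i\ne a_j$ enters — the stronger hypothesis $(a_i)^\ell\ne (a_j)^\ell$ is present for compatibility with the non-$q$-equivalence conditions invoked in Proposition~\ref{prop:block a,aq^{-(l-1)}} where this lemma will be applied.
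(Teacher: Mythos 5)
Your overall strategy — expanding $J_m(\vec{a})B - qBJ_n(\vec{a}')=0$ entry by entry into the scalar recurrence $b_{i+1,j}=q\,b_{i,j-1}+(a_j-a_i)\,b_{ij}$ (with $b_{i,0}:=0$, $b_{m+1,j}:=0$) and then reading off the two cases — is exactly what the paper intends; the authors omit the proof and only point to the analogous $q=-1$ computation in \cite[Lemma~4.3]{CW18}, which is this same calculation. So the approach matches.

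There is, however, a small but real error in your stated inductive step for the vanishing of the strictly lower-triangular entries. At a position $(i,j)$ with $i>j$, the recurrence reads $b_{ij}=q\,b_{i-1,j-1}+(a_j-a_{i-1})\,b_{i-1,j}$, and you assert that \emph{both} $b_{i-1,j-1}$ and $b_{i-1,j}$ vanish by the inductive hypothesis. That is correct for $i>j+1$, but fails at $i=j+1$: there $b_{i-1,j}=b_{j,j}$ lies on the diagonal, is not covered by the hypothesis, and is generically nonzero. The conclusion at $i=j+1$ is rescued not by the inductive hypothesis but by the vanishing coefficient $a_j-a_{i-1}=a_j-a_j=0$ — the very mechanism you invoke, correctly, to kill the bottom-row chain when $m\le n$. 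In other words, the delicate boundary is at $i=j+1$, not at $j=1$ (the $j=1$ case is trivial via $b_{i,0}=0$). With this correction the induction closes, and the remainder of your sketch — the automatic consistency of the bottom-row equations when $m\ge n$, the derivation of the three listed recurrences when $m\le n$ — is sound. Your closing remark that the hypothesis on the $a_i$ plays no role in the argument is also accurate; in fact not even $a_i\neq a_j$ is used, since the only differences that arise are of the form $a_j-a_j$, consistent with Remark~\ref{rem:QC1a} which addresses the fully degenerate case $a_i=a$ for all $i$.
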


\begin{rem}  \label{rem:QC1a}
Lemma \ref{lem:QC2a} reduces to Lemma~\ref{Jordan q-layer} and $B$ becomes a $q$-layered matrix if we allow $a_i=a \in \bbC^*$ for all $i$.
\end{rem}


For a partition $\mu =(\mu_1, \mu_2, \ldots)$, we denote by $\mu' =(\mu'_1, \mu'_2, \ldots)$ its \emph{transposed partition}. We also denote by $J_\mu(a)$ the Jordan normal form of eigenvalue $a$ and of block sizes $\mu_1, \mu_2, \cdots$.

Let $n=\sum^\ell_{i=1} n_i$ and $\nu_i$ be a partition of $n_i$ for each $1\leq i\leq \ell$.
Let us write the partitions in parts as
$$\nu_i=(n_{i1},n_{i2} \ldots, n_{i,s_i})\text{ and }\nu'_i=(n'_{i1},n'_{i2} \ldots, n'_{i,t_i})$$
for any $1\leq i\leq \ell$.
Let $N=\max\{t_1,t_2,\ldots, t_{\ell}\}$ and $n'_{i,t_i+1}=n'_{i,t_i+2}=\cdots=n'_{i,N}=0$.
Let $m_{j1}, m_{j2},\ldots, m_{j,\ell}$ be the associated multiset of
{$n'_{1j},n'_{2j},\ldots, n'_{\ell,j}$} for any $1\leq j\leq N$.
Set $$m_i= \sum^N_{j=1}m_{ji},\text{ for any }1\leq i\leq \ell,$$
and
$$\bfm=(m_1,m_2,\dots, m_{\ell}),\,\,\,\bfm_j=(m_{j1},m_{j2},\dots, m_{j,\ell}),\,\,\forall 1\leq j\leq N.$$

The following result generalizes Proposition~\ref{Prop:QCJor}.
\begin{prop}
\label{prop:block a,aq^{-(l-1)}}
Let $A=\diag(J_{\nu_1}(a), J_{\nu_2}(aq^{-1}),\ldots, J_{\nu_{\ell}}(aq^{-(\ell-1)}))$, where $a\in\bbC^*$.
With the notation as above,
we have $(A,B)\in \scrK_{q,(\bfm,0)}$,  for any $B\in \scrR_{q,A}$.
\end{prop}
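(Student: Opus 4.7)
My plan is to extend Proposition \ref{Prop:QCJor} from the diagonalizable case to the general Jordan case, in three stages: (i) replace $A$, up to similarity, by a block matrix built out of the $\mathbb{J}_{s,n}(\alpha)$'s via Lemma \ref{lem:sJordan}; (ii) use Proposition \ref{QLBmatrix} to obtain a normal form for $B\in\scrR_{q,A}$, and cut out a dense open subset $\calW\subseteq\scrR_{q,A}$ on which the Jordan structure of $B$ is rigid; (iii) show that generic $(\tilde A,B)$ splits, as an $\bbA_q^2$-module, as a direct sum across the columns $j=1,\dots,N$ of the transposed partitions, with the $j$-th summand landing in $\scrK_{q,(\bfm_j,0)}$ by Proposition \ref{Prop:QCJor}.

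For stage (i), let $d_{i,s}=n'_{i,s}-n'_{i,s+1}$ denote the number of size-$s$ Jordan blocks in $\nu_i$. Applying Lemma \ref{lem:sJordan} inside each $J_{\nu_i}(aq^{-(i-1)})$ and regrouping basis vectors, $A$ becomes similar to
\[
\tilde A = \bigoplus_{s\geq 1}\,\bigoplus_{i=1}^{\ell} \mathbb{J}_{s,d_{i,s}}(aq^{-(i-1)}).
\]
Proposition \ref{QLBmatrix} then gives a complete block description of $\scrR_{q,\tilde A}$: the only nonzero entries occur between summands whose eigenvalues differ by the factor $q$, and each such entry is a $q$-layered block matrix of the shape \eqref{eq:block al1}--\eqref{eq:block al2}.

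For stage (ii), I proceed as in the proof of Proposition \ref{Prop:QCJor}. Let $\calW\subseteq \scrR_{q,\tilde A}$ be the locus where every leading $q$-layer $B_1$ attains maximal rank and the nonzero eigenvalues of $B$ are pairwise non-$q$-equivalent; this $\calW$ is a nonempty open subvariety, hence dense by irreducibility of $\scrR_{q,\tilde A}$. For $B\in\calW$, a rank computation in the spirit of Lemma \ref{lem:asso number}, performed separately on each column $j$ of the Young diagrams of $\nu_1,\dots,\nu_\ell$, shows that the powers $B^i$ have ranks exactly $\sum_{j=1}^{N}\sum_{k=1}^{\ell}\min\{n'_{k,j},n'_{k+1,j},\dots,n'_{k+i,j}\}$. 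Invoking Lemma \ref{lem:asso number} column by column then identifies the Jordan type of $B$ with the one prescribed by $\bfm$. For stage (iii), since generic $B\in\calW$ has distinct nonzero eigenvalues distributed along distinct $q$-chains, an additional generic conjugation splits $(\tilde A,B)$ as a direct sum $\bigoplus_{j=1}^{N} (\tilde A^{(j)},B^{(j)})$ where $(\tilde A^{(j)},B^{(j)})$ is of the form addressed by Proposition \ref{Prop:QCJor}, with $(n_1,\dots,n_\ell)$ there replaced by $(n'_{1,j},\dots,n'_{\ell,j})$. Thus each summand lies in $\scrK_{q,(\bfm_j,0)}$, and by Remark \ref{remark oplus} we obtain $(\tilde A,B)\in\scrK_{q,(\bfm,0)}$ for every $B\in\calW$. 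Closing up in $\scrK_{q,n}$ and undoing the initial conjugation yields the statement for all $B\in\scrR_{q,A}$.

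The main obstacle lies in stage (iii). The $\mathbb{J}$-block decomposition of $\tilde A$ groups summands by Jordan block \emph{size} $s$, whereas the target index set $\bfm$ is defined via the \emph{columns} of the transposed partitions; these groupings do not coincide a priori, so the generic conjugation must be chosen carefully in order to realize the module splitting column-by-column rather than size-by-size. Bridging this mismatch is precisely where the combinatorial identity $m_i=\sum_j m_{j,i}$, together with the uniqueness statement in Lemma \ref{lem:asso number}, is needed: it forces the rank profile of $B$ on $\tilde A$ to match the aggregate rank profile of the hypothetical column-blocks $B^{(j)}$ on $\tilde A^{(j)}$, and thereby pins down the desired direct-sum decomposition.
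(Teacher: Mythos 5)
Your stages (i)--(ii) are consistent with the general strategy, but stage (iii) contains a genuine gap that the paper's deformation argument is designed to circumvent. You write that ``an additional generic conjugation splits $(\tilde A, B)$ as a direct sum $\bigoplus_{j=1}^N(\tilde A^{(j)}, B^{(j)})$ where $(\tilde A^{(j)}, B^{(j)})$ is of the form addressed by Proposition~\ref{Prop:QCJor}.'' But Proposition~\ref{Prop:QCJor} requires its first entry to be \emph{diagonal}, and $\tilde A$ has nontrivial Jordan structure whenever some $\nu_i$ is a nontrivial partition. No conjugation can change the Jordan type of $\tilde A$, so no conjugation can make the constituent blocks $\tilde A^{(j)}$ simultaneously diagonal. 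You acknowledge the mismatch between the $s$-grouping of $\tilde A$ and the column indexing of $\bfm$, but the proposed fix --- Lemma~\ref{lem:asso number} plus the identity $m_i=\sum_j m_{j,i}$ --- only constrains the Jordan type of $B$; it does not produce the claimed module decomposition of the \emph{pair} $(\tilde A, B)$.

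There is a second, related gap in stage (ii): the rank formula $\rank(B^i)=\sum_{j=1}^{N}\sum_{k=1}^{\ell}\min\{n'_{k,j},\dots,n'_{k+i,j}\}$ for generic $B\in\scrR_{q,\tilde A}$ is asserted but not proved. In the diagonalizable case of Proposition~\ref{Prop:QCJor}, $B$ is a cyclic block matrix and the rank computation is manageable; here $B$ is built from $q$-layered block matrices sitting inside the nontrivial nilpotent structure of $\tilde A$, and the analogous computation is substantially more delicate. Even if you carried it out, you would only learn the Jordan type of generic $B$, not the splitting of the pair.

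What you are missing is the paper's \emph{perturbation} step. The paper does not conjugate $A$: it replaces $A$ by a deformed matrix $A'$ in which the constant eigenvalue $a$ in each Jordan block is replaced by distinct generic scalars $a_1q^{-k},\dots,a_{n_{k,j}}q^{-k}$ (chosen so the $a_i$ are pairwise non-$q$-equivalent). This $A'$ is diagonalizable --- the deformation, not a conjugation, destroys the nilpotent part --- and upon diagonalizing $A'$ the eigenvalue multiplicities in each $q$-class $\{a_i,a_iq^{-1},\dots\}$ are precisely the column entries $(n'_{1i},\dots,n'_{\ell i})$ of the transposed partitions. Proposition~\ref{QCmatrix} then forces $B'\in\scrR_{q,A'}$ to be block-diagonal along the $q$-classes, and Proposition~\ref{Prop:QCJor} applies to each block. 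The approximation $B'\to B$ as $a_i\to a$ (enabled by Lemmas~\ref{lem:QC1a}--\ref{lem:QC2a} and Remark~\ref{rem:QC1a}) carries the conclusion back to the undeformed $(A,B)$ by closure. This deformation-then-diagonalize step is precisely what resolves the size-versus-column mismatch you identified, and it is absent from your argument.
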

\begin{proof}


By Lemma~\ref{Jordan q-layer}, any $B \in \scrR_{q,A}$ can be written in the same block shape as for $A$, i.e., $B=(B_{ij})_{s\times s}$, where $s=\sum_{k=1}^\ell s_k$, such that
$B_{ij}$  are block matrices of $q$-layered matrices if $\sum_{p=0}^{k-1}s_{p}\leq i< \sum_{p=0}^ks_p\leq j< \sum_{p=0}^{k+1}s_{p}$ for $1\leq k\leq \ell-1$,
or if $1\leq j < s_0+s_1$, $\sum_{p=0}^{\ell-1}s_{p} \leq i <\sum_{p=0}^\ell s_p$; while $B_{ij}=0$ for others. Here, we set $s_0=1$.

Let $a_1,\dots,a_N\in\bbC^*$ such that $(a_i)^\ell\neq (a_j)^\ell$ for any $j\neq i$.
Consider the block matrix
\[
A'=\diag(A_{1}, A_{2},\dots,A_{s_1}; A_{s_1+1}, A_{s_1+2},\dots,A_{s_1+s_2}; \dots; A_{\sum_{k=1}^{\ell-1}s_k+1},A_{\sum_{k=1}^{\ell-1}s_k+2},\dots,A_{s}),
\]
where $A_{i}=J_{n_{k,j}}(\vec{\alpha}_{n_{k,j}})$ is defined as follows:
\begin{itemize}
\item $k,j$ are the unique numbers satisfying $\sum_{p=0}^ks_p\leq i< \sum_{p=0}^{k+1}s_p,  j=i-\sum_{p=1}^ks_p$ for any $1\leq i< \sum_{p=0}^{\ell}s_p$;
\item $\vec{\alpha}_{n_{k,j}}=(a_1q^{-k},\dots, a_{n_{k,j}}q^{-k})$.
\end{itemize}
Let $B'=(B_{ij})_{s\times s}$ be of the same block shape as for $A'$.
Then $B' \in \scrR_{q,A'}$ if and only if
\[
A_iB'_{ij}-qB'_{ij}A_j=0,\quad \forall 1\leq i\leq s; 1\leq j \leq s.
\]
From Lemma~\ref{lem:QC2a} and Lemma~\ref{lem:QC1a}, we know the detailed structures of $B'_{ij}$, more explicitly, $B'_{ij}$ can be nonzero only if $\sum_{p=0}^{k-1}s_{p}\leq i< \sum_{p=0}^ks_p\leq j< \sum_{p=0}^{k+1}s_{p}$ for $1\leq k\leq \ell-1$,
or if $1\leq j <s_0+ s_1$, $\sum_{p=0}^{\ell-1}s_{p} \leq i <\sum_{p=0}^{\ell} s_p$.
Therefore, by applying Remark~ \ref{rem:QC1a} and the observation in the preceding paragraph, for any $B=(B_{ij})\in\scrR_{q,A}$, there exists $B'=(B'_{ij})\in \scrR_{q,A'}$ such that $B'$ approaches $B$ as $a_i\rightarrow a$ (e.g., by taking $a_i=a+it$ with $t\rightarrow 0$). This reduces the proof of the proposition to the following.

\vspace{2mm}
\noindent{\bf Claim.} We have $(A',B') \in\scrK_{q,(\bfm,\vec{0})}$.

Let us prove the Claim.  Denote $\diag(\overbrace{a,\ldots,a}^m)$ by $(a;m)_1$.
There exists 
 $g\in \GL_{n}$ such that
\begin{eqnarray*}
g^{-1}A'g=\diag(A''_1,\dots,A''_N),
\end{eqnarray*}
where $A''_i= \diag\big((a_i;n'_{1i})_1, (a_i q^{-1};n'_{2i})_1,\ldots, (a_i q^{-(\ell-1)};n'_{\ell i})_1\big )$ for each $1\leq i\leq N$.
Proposition \ref{QCmatrix} shows that $g^{-1}B'g \in \scrR_{q,g^{-1}A'g}$ must be of the form
\[
g^{-1}B'g =\diag(B''_{1},\dots, B''_{N} ),
\]
where $A_i''B_i''=qB_i''A_i''$ for any $1\leq i\leq N$. Together with Proposition \ref{Prop:QCJor}, we have $(A''_i,B''_i)\in \scrK_{q,(\bfm_{i},\vec{0})}$ for any $1\leq i\leq N$. By Remark \ref{remark oplus}, we conclude that $(g^{-1}A'g,g^{-1}B'g)\in\scrK_{q,(\bfm,\vec{0})}$, and then so is $(A',B')$.
The proposition is proved.
\end{proof}

\subsection{$\scrR_{q,A}$ for $A$ nilpotent}

The following proposition is a generalization of \cite[Proposition 4.6]{CW18}.

\begin{prop}\label{prop:KJsn}
For any $B\in\scrR_{q,\mathbb{J}_{s,n}(0)}$, we have $(\mathbb{J}_{s,n}(0),B)\in  \scrK_{q,(\bfm,\bfr_t)}$,
where $s=k\ell+t$ with $0\leq t<\ell$, and $\bfm=(0,\ldots,0,nk)$,
$\bfr_t=(0,\ldots,0,n,0,\ldots,0)$ denotes the vector where $n$ is its $t$-th component.
\end{prop}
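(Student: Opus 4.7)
The plan is to combine the parametrization of $\scrR_{q,\mathbb{J}_{s,n}(0)}$ supplied by Lemma~\ref{lem:QLB} with an explicit one-parameter deformation, in the spirit of the proof of Proposition~\ref{prop:block a,aq^{-(l-1)}}. By Lemma~\ref{lem:QLB} with $\alpha_1=\alpha_2=0$, every $B\in\scrR_{q,\mathbb{J}_{s,n}(0)}$ has the form $B=\QLB(s,s,(B_1,\ldots,B_s))$ with $B_i\in M_{n\times n}(\bbC)$, so $\scrR_{q,\mathbb{J}_{s,n}(0)}$ is an irreducible affine space. Since $\scrK_{q,(\bfm,\bfr_t)}$ is closed, it suffices to establish the claim on a dense open subset $\scrV\subseteq\scrR_{q,\mathbb{J}_{s,n}(0)}$; I take $\scrV$ to consist of those $B$ for which $B_1$ is invertible with $n$ distinct eigenvalues $\lambda_1,\ldots,\lambda_n$ that are pairwise not $q$-equivalent.

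By Lemma~\ref{lem:sJordan} I may conjugate so that $A=J_s^n(0)$, whence $B$ becomes an $n\times n$ block matrix of $q$-layered matrices $\L^q(s,s,\vec{v}_{ij})$ by Proposition~\ref{QCmatrix}. For $B\in\scrV$ I build a family $(A(\tau),B(\tau))\in\scrK_{q,ns}$ with $(A(0),B(0))=(A,B)$ such that, for $\tau\neq 0$, $(A(\tau),B(\tau))$ is $\GL_{ns}$-conjugate to a point of $U_{q,\ell}^{\oplus nk}\oplus V_{q,t}^{\oplus n}$. On the $j$-th $J_s(0)$-block I let $A_j(\tau)$ be $J_s(0)$ modified by adding the entry $\tau^{k\ell}\lambda_j^{k\ell}$ in position $(k\ell,1)$. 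Its upper-left $k\ell\times k\ell$ corner is then the companion matrix of $x^{k\ell}-\tau^{k\ell}\lambda_j^{k\ell}$, whose $k\ell$ distinct roots $\tau\lambda_j\omega^r$ (with $\omega$ a primitive $(k\ell)$-th root of unity satisfying $\omega^k=q^{-1}$) split into exactly $k$ $q$-chains of length $\ell$, while the lower-right $t\times t$ corner is $J_t(0)$. A direct kernel computation gives $\dim\ker A_j(\tau)=1$, so the nilpotent part of $A_j(\tau)$ is a single Jordan block of size $t$. Aggregating over $j=1,\ldots,n$ produces $nk$ $q$-chains of length $\ell$ together with $n$ Jordan blocks of size $t$, matching $\bfm=(0,\ldots,0,nk)$ and $\bfr_t$.

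Simultaneously I deform $B$ via a power-series ansatz $B(\tau)=B+\sum_{i\ge 1}\tau^i B^{(i)}$, whose coefficients are determined order-by-order by the relation $A(\tau)B(\tau)=qB(\tau)A(\tau)$; at each stage this reduces to a solvable linear equation in $B^{(i)}$ because the obstruction lies in the image of the $q$-twisted commutator with $A(0)=J_s^n(0)$ applied to elements of $M_{ns\times ns}(\bbC)$, which one checks surjects onto the required target using the block structure of Proposition~\ref{QCmatrix}. For $\tau\neq 0$ small, the splitting of $A(\tau)$ into its invertible and nilpotent parts induces a parallel splitting of $B(\tau)$; Proposition~\ref{Prop:QCJor} applied to the invertible piece places it in $\scrD_{q,\ell}^{\oplus nk}$, while the nilpotent piece lies in $\scrN_{q,t}^{\oplus n}$. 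Letting $\tau\to 0$ and using closedness of $\scrK_{q,(\bfm,\bfr_t)}$ yields $(\mathbb{J}_{s,n}(0),B)\in\scrK_{q,(\bfm,\bfr_t)}$ for all $B\in\scrV$, and the general case follows by taking closure.

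The main obstacle will be verifying that for $\tau\neq 0$ the pair $(A(\tau),B(\tau))$ really lies in the intended open stratum $U_{q,\ell}^{\oplus nk}\oplus V_{q,t}^{\oplus n}$: while the Jordan structure of $A(\tau)$ is explicit from the companion-matrix construction, one must check that the simultaneous reduction of $B(\tau)$ produces the correct cyclic structure on the $\scrD_{q,\ell}$-part and a generic $q$-layered structure on the $\scrN_{q,t}$-part. I expect this to follow from the genericity of $B_1$ (its $n$ distinct, non-$q$-equivalent eigenvalues determine the cyclic data on the invertible side via Proposition~\ref{Prop:QCJor}), combined with the observation that the leading term of $B(\tau)$ restricted to each nilpotent block is precisely the original $q$-layered matrix on the last $t$ coordinates of $\vec{v}_{jj}$.
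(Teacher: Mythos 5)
Your proposal takes a genuinely different route from the paper. The paper does \emph{not} deform $A$; it keeps $A=\mathbb{J}_{s,n}(0)$ fixed and instead studies the generic Jordan structure of $B\in\scrR_{q,A}$: it introduces the incidence variety $\calV_{s,n}$ cut out by vanishing of $(sn-1)\times(sn-1)$ minors of $xI_{sn}-B$, exhibits a regular sequence $(F,G)$ (the characteristic polynomial and a carefully chosen minor) to bound $\dim\calV_{s,n}\le sn^2-1$, and concludes that for a dense open set of $B$ every eigenvalue of $B$ occurs in a single Jordan block of explicit size $k$ or $k+1$. It then applies the swap $\theta$ (Lemma~\ref{lem:switch}) to $(B,A)$ so that the ``diagonal'' matrix is $B$, and invokes Proposition~\ref{prop:block a,aq^{-(l-1)}}. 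Your proposal instead holds $B$ fixed and deforms $A$ into a companion-type matrix, which is an intuitive strategy but introduces technical issues the paper's argument is designed to avoid.

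There are concrete gaps. First, the claim that ``the obstruction lies in the image of the $q$-twisted commutator with $A(0)=J_s^n(0)$\ldots which one checks surjects onto the required target'' is not correct as stated: the map $X\mapsto A(0)X-qXA(0)$ is \emph{not} surjective on $M_{ns}(\bbC)$; its cokernel is dual to $\scrR_{q^{-1},A(0)}$, which has dimension $sn^2$. What you would actually need is that the particular obstruction element $A_1B^{(i)}-qB^{(i)}A_1$ lands in the image at every order of the power series, a nontrivial verification you have not carried out. Second, even a formal power-series solution $B(\tau)$ would not immediately give you an algebraic (or even convergent) curve in $\scrK_{q,ns}$; for the closure argument you need a genuine one-parameter family (or at least a formal curve landing in $\scrK_{q,(\bfm,\bfr_t)}$ generically), and this requires extra justification. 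Third, you yourself flag as ``the main obstacle'' the verification that $(A(\tau),B(\tau))$ for $\tau\neq0$ actually lies in the open stratum $U_{q,\ell}^{\oplus nk}\oplus V_{q,t}^{\oplus n}$; this is precisely the step the paper's minor-vanishing dimension count substitutes for, and without it you have not shown $(A(\tau),B(\tau))\in\scrK_{q,(\bfm,\bfr_t)}$ for $\tau\neq0$. Until these three points are resolved, the argument does not close.

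A secondary, cosmetic point: it is not necessary (and not obviously true) that one can choose $\omega$ both primitive of order $k\ell$ and satisfying $\omega^k=q^{-1}$. What matters is only that the $k\ell$ roots of $x^{k\ell}=\tau^{k\ell}\lambda_j^{k\ell}$ partition into $k$ $q$-chains of length $\ell$, which follows directly because the root set is stable under multiplication by $q^{-1}$; the appeal to a primitive $\omega$ with $\omega^k=q^{-1}$ should be replaced by this observation.
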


\begin{proof}
The case for  $s< \ell$ is proved by Lemma \ref{lem:nilpotent1}.

We shall now assume $s\ge \ell$. Setting $B=\QLB (s,s,B_1,\ldots, B_s)$, we introduce the following variety
\begin{align*}
\calV_{s,n} &=\big\{ (\la, B_1,\ldots, B_s) \in \bbC \times M_{n\times n}(\bbC)^s \mid
\\
&\qquad\qquad
\text{ all } (sn-1)\times (sn-1) \text{ minors of } (x I_{sn} - B ) \text{ are } 0 \big\}.
\end{align*}
Let us denote $B_i=(b_{i;k,j})_{k,j}$, for each $i$. Then the characteristic polynomial of $B$ is
\[
F=\det (x I_n -B_1) \det (xI_n -qB_1)\cdots \det (x I_n -q^{(s-1)}B_1)
\]
 is a polynomial in the $x$ and $b_{1;k,j}$'s. Let $C_{sn,1}$ be the matrix obtained from $x I_{sn} - B$ with the first column and the last row deleted, and denote $G =\det C_{sn,1}$ (which is an $(sn-1)\times (sn-1)$-minor of $x I_{sn} - B$).
Observe that the product of the diagonal entries of $C_{sn,1}$ gives us (up to a power of $q$ ) a monomial $b_{2;n,1}^{s-1}\prod_{i=1}^{n-1} b_{1;i,i+1}^s,$ which is of highest degree in $b_{2;n,1}$ and is not equal to any other monomials from the expansion of $G=\det C_{sn,1}$. It follows that $(F,G)$ is a regular sequence.

Then
\[
\calV_{s,n} \subset \calV_{s,n}' := \big\{ (\la, B_1,\ldots, B_s) \in \bbC \times M_{n\times n}(\bbC)^s \mid
F=G=0 \big\}.
\]
It follows that $\dim \calV_{s,n} \le \dim \calV_{s,n}'  =(1+sn^2)-2=sn^2-1$.

Consider the composition of a projection with an isomorphism
\[
\psi: \calV_{s,n} \longrightarrow M_{n\times n}(\bbC)^s \stackrel{\cong}{\longrightarrow} \scrK_{q,\bbJ_{s,n}(0)},
\quad  (\la, B_1,\ldots, B_s) \mapsto \QLB (s,s,B_1,\ldots, B_s).
\]
Denote by $W^c$ the complement of a subvariety $W$ in $\scrR_{q,\bbJ_{s,n}(0)}$.
By definition, we have
\begin{equation*}
\text{Im}(\psi)^c =\big\{B= \QLB (s,s,B_1,\ldots, B_s) ) \mid \rank (\la I_{sn} -B) \ge sn-1, \forall \la \in \bbC \big\}.
\end{equation*}
Clearly $U_1:=\big\{B= \QLB (s,s,B_1,\ldots, B_s) ) \mid B_1 \text{ has distinct eigenvalues } b_1, \ldots, b_n \big\}$ is a dense open subset of $\scrK_{q,\bbJ_{s,n}(0)}$.
Since $\dim \ov{\text{Im}(\psi)} = \dim \text{Im}(\psi) \le \dim \calV_{s,n} \le sn^2-1 <\dim \scrR_{q,\bbJ_{s,n}(0)}$, we see
\[
\calU_{q,\mathbb{J}_{s,n}(0)} := \ov{\text{Im}(\psi)}^c \cap U_1
\]
is dense open in $\scrR_{q,\bbJ_{s,n}(0)}$. By construction we have
\begin{align}
\label{eq:Ubig}
\begin{split}
\calU_{q,\mathbb{J}_{s,n}(0)} &\subseteq \big\{B= \QLB (s,s,B_1,\ldots, B_s) ) \mid \rank (\lambda I_{sn} -B) \ge sn-1, \forall \la \in \bbC,
\\
&\qquad\qquad\qquad\qquad\qquad  B_1 \text{ has distinct nonzero eigenvalues } b_1, \ldots, b_n \big\}.
\end{split}
\end{align}

It follows by \eqref{eq:Ubig} that for any $B\in \calU_{q,\mathbb{J}_{s,n}(0)}$ there is a unique Jordan block with eigenvalue $b_i$ (respectively, $b_iq^{-1},
b_iq^{-2},\cdots, b_iq^{-(\ell-1)}$) in the Jordan normal form  of $B$, which is $J_{k+1}(b_i)$
(respectively, $J_{k+1}(b_iq^{-1}),\ldots,J_{k+1}(b_iq^{-(t-1)}),J_{k}(b_iq^{-t}),\ldots,J_{k}(b_1q^{-(\ell-1)})  $), for all $i$.
It follows that for any
$B\in \calU_{q,\mathbb{J}_{s,n}(0)}$, $B$ is similar to
\begin{align*}
\diag \big(J_{k+1}(b_1),J_{k+1}(b_1q^{-1}), \ldots, J_{k+1}(b_1q^{-(t-1)}), J_{k}(b_1q^{-t}),\ldots, J_{k}(b_1q^{-(\ell-1)});\dots;\\
             J_{k+1}(b_n),J_{k+1}(b_nq^{-1}), \ldots, J_{k+1}(b_nq^{-(t-1)}), J_{k}(b_nq^{-t}),\ldots, J_{k}(b_nq^{-(\ell-1)}) \big),
\end{align*}
for distinct $b_1, \ldots, b_n \in \bbC^*$.
Together with Lemma~\ref{lem:switch},  Proposition~\ref{prop:block a,aq^{-(l-1)}}, and that $\ov{\calU_{q,\mathbb{J}_{s,n}(0)}} =\scrR_{q,\bbJ_{s,n}(0)}$, the desired result follows.
\end{proof}


The following proposition is a generalization of Proposition \ref{prop:KJsn}.

\begin{prop}
\label{prop:Jordan block 0}
Let $A=\diag(\mathbb{J}_{1,n_1}(0),\ldots, \mathbb{J}_{s,n_s}(0))$, $n=\sum^s_{i=1} in_i$. Let
$i=k_i\ell+t_i$ with $0\leq t_i<\ell$ for any $1\leq i\leq s$, and let $p=\sum^s_{i=1}n_ik_i$. Let
$$r_j=\sum_{i=1, t_i=j}^{s} n_i,\,\,\forall 1\leq i\leq s.$$ 
and $\bfr=(r_1,r_2,\dots, r_{\ell-1})$.
Then for any $B\in \scrR_{q,A}$, we have $(A,B)\in\scrK_{q,((0,\dots,0,p),\bfr)}$.
\end{prop}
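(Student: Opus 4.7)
The plan is to generalize the strategy of Proposition \ref{prop:KJsn} (which handles the single block $A=\mathbb{J}_{s,n}(0)$) to the direct-sum setting $A = \bigoplus_{i=1}^s \mathbb{J}_{i,n_i}(0)$. By Proposition \ref{QLBmatrix}, every $B \in \scrR_{q,A}$ decomposes as an $s \times s$ block matrix $(B_{ij})$ with $B_{ij}=\QLB(i,j,\vec V_{ij})$, since all diagonal blocks of $A$ have eigenvalue $0$ and the compatibility $\alpha_i=q\alpha_j$ is automatic. I would define a dense open subset $\calU_{q,A}\subseteq\scrR_{q,A}$ consisting of those $B$ such that, for each $i$, the leading coefficient $V_{ii}^{(1)}$ of $B_{ii}$ has $n_i$ distinct nonzero eigenvalues $b_{i,1},\dots,b_{i,n_i}$; the collected scalars $\{b_{i,j}\}_{i,j}$ are pairwise non-$q$-equivalent; and the rank-of-minor conditions of the $\calV_{s,n}$-argument inside the proof of Proposition \ref{prop:KJsn} hold block-by-block. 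Density of $\calU_{q,A}$ would be established by a dimension count modelled on the single-block case applied to each $B_{ii}$.

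The key technical step is to show that for $B\in\calU_{q,A}$ the Jordan normal form of $B$ coincides with that of the block-diagonal $\diag(B_{11},\dots,B_{ss})$, namely
\[
B \sim \bigoplus_{i=1}^s \bigoplus_{j=1}^{n_i} \Bigl( \bigoplus_{k=0}^{t_i-1} J_{k_i+1}(b_{i,j}q^{-k}) \;\oplus\; \bigoplus_{k=t_i}^{\ell-1} J_{k_i}(b_{i,j}q^{-k}) \Bigr),
\]
the Jordan structure dictated by Proposition \ref{prop:KJsn} applied to each diagonal block separately. Granting this, view $(B,A) \in \scrK_{q^{-1},n}$ and apply Proposition \ref{prop:block a,aq^{-(l-1)}} to each $q^{-1}$-chain starting at $b_{i,j}$: the transposed-partition data reads $n'_{i_P,k_P}=1$ for $k_P\leq k_i$ and all $i_P$, and $n'_{i_P,k_i+1}=1$ only for $i_P\leq t_i$. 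An explicit application of Lemma \ref{lem:asso number} shows that the associated multiset for each chain is $m_\ell=k_i$, $m_{t_i}=1$, and all other $m_k=0$. Summing over $(i,j)$ via Remark \ref{remark oplus} places $(B,A)$ in $\scrK_{q^{-1},((r_1,\dots,r_{\ell-1},p),\vec 0)}$; since $\theta^{-1}((r_1,\dots,r_{\ell-1},p),\vec 0) = ((0,\dots,0,p),\bfr)$, Lemma \ref{lem:switch} gives $(A,B)\in\scrK_{q,((0,\dots,0,p),\bfr)}$. Closedness of the target and density of $\calU_{q,A}$ then extend the conclusion to every $B\in\scrR_{q,A}$.

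The main obstacle is the Jordan normal form claim in the middle step: a priori the off-diagonal blocks $B_{ij}$ ($i\neq j$) could couple different $\mathbb{J}_{i,n_i}(0)$-summands and alter the Jordan structure of $B$ away from the block-diagonal pattern. I see two candidate remedies. The first is to extend the characteristic-polynomial and minor argument of Proposition \ref{prop:KJsn}: show that for generic $B$ the characteristic polynomial factors dominantly as $\prod_i\prod_{k=0}^{i-1}\det(xI_{n_i}-q^k V_{ii}^{(1)})$, and that the generic ranks of $\lambda I - B$ still match those predicted by each diagonal block once the eigenvalues across different blocks are pairwise non-$q$-equivalent. The second, suggested by small examples such as $A=\diag(0,J_2(0))$ where one can explicitly exhibit such a conjugation, is to show that for generic $B$ there is $g\in\GL_n$ in the centralizer of $A$ with $g^{-1}Bg=\diag(B_{11},\dots,B_{ss})$, reducing the proposition to a direct sum of single-block applications of Proposition \ref{prop:KJsn} via Remark \ref{remark oplus}. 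Either route depends crucially on exploiting the pairwise non-$q$-equivalence of the $b_{i,j}$'s across different diagonal blocks.
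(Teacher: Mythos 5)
Your overall scaffolding matches the paper's: decompose $B$ into $q$-layered blocks via Proposition~\ref{QLBmatrix}, impose genericity on the diagonal blocks, determine the Jordan form of generic $B$, switch the roles of $A$ and $B$ via Lemma~\ref{lem:switch}, apply Proposition~\ref{prop:block a,aq^{-(l-1)}} and Remark~\ref{remark oplus}, and extend by density. But the step you flag as the ``main obstacle'' --- that generic $B\in\scrR_{q,A}$ has the same Jordan normal form as the block-diagonal $\diag(B_{11},\dots,B_{ss})$ --- is exactly the proposition's core content, and you do not actually prove it. You name two possible remedies but carry out neither, so the argument does not close.

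The paper fills this gap with a structural observation rather than redoing the characteristic-polynomial/minor computation. Following the permutation argument of \cite[Lemma~4.8]{CW18}, one conjugates $B$ by a \emph{permutation matrix} $P$ to a block upper-triangular matrix whose diagonal blocks are precisely the original $B_{11},\dots,B_{ss}$. The genericity condition is formulated as: $B_{ii}\in\calU_{\mathbb{J}_{i,n_i}(0)}$ (the dense open set from Proposition~\ref{prop:KJsn}) and $B_{ii}^\ell$, $B_{jj}^\ell$ share no common eigenvalue for $i\neq j$ (i.e., eigenvalues across distinct blocks are not $q$-equivalent). Block upper-triangularity plus disjoint spectra forces $B$ to be similar to $\diag(B_{11},\dots,B_{ss})$, and then each $B_{ii}$ contributes the Jordan structure you wrote down, by Proposition~\ref{prop:KJsn}. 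Your remedy~(a) could in principle reach the same conclusion by a direct rank computation, but it would essentially be re-deriving what the permutation makes transparent. Your remedy~(b) asks for a conjugating $g$ \emph{in the centralizer of $A$}; this is a stronger requirement than what is needed. The paper's permutation $P$ does not centralize $A$, and that is fine: one conjugates both $A$ and $B$ by $P$ and then uses the $\GL_n$-invariance of $\scrK_{q,(\bfm,\bfr)}$ to transport the conclusion back, so there is no need to keep $A$ fixed. Requiring $g$ to centralize $A$ would turn the problem into a nontrivial question about orbits of the centralizer acting on $\scrR_{q,A}$, which you would then have to resolve separately.

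In short: right architecture, correct identification of the crux, but the crux is left unproved. To complete your argument along remedy~(a), you would need to verify both the dominant factorization of $\chi^B$ \emph{and} the generic rank conditions \eqref{eqn: 2} across the coupled block structure, which the paper sidesteps by the permutation-to-upper-triangular trick.
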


\begin{proof}
Any $B\in \scrR_{q,A}$ can be written as a block matrix $B=(B_{ij})_{s\times s}$ such that
$\mathbb{J}_{i,n_i}(0)B_{ij}-qB_{ij}\mathbb{J}_{j,n_j}(0)=0$ for any $1\leq i,j\leq s$. It follows by Lemma~\ref{lem:QLB} that each $B_{ij}$ is a $q$-layered block matrix, i.e., of the form \eqref{eq:block al1} or \eqref{eq:block al2}. Introduce the following dense open subset $\calU$ of $\scrR_{q,A}$ (and so $\scrR_{q,A} =\ov{\calU}$):
\begin{align*}
\calU = &\left\{B=(B_{ij})\in \scrR_{q,A} \mid B_{ii}\in \calU_{\mathbb{J}_{i,n_i}(0)}, B_{ii}^\ell \text{ and }B_{jj}^\ell \right.\\
&\left.\text{do not share a common eigenvalue}, \forall 1\leq i\neq j\leq s \right\},
\end{align*}
where $\calU_{q,\mathbb{J}_{i,n_i}(0)}$ is a dense open subset of $\scrR_{q,\mathbb{J}_{i,n_i}(0)}$ for each $i$ as given in Proposition~\ref{prop:KJsn}. Note the first condition in $\calU$ implies that all eigenvalues in $B_{ii}$ are distinct for each $i$, and the second condition in $\mathcal U$ implies that
 the eigenvalues of $B_{ii}$ and $B_{jj}$ for $j\neq i$ are not $q$-equivalent
Clearly $\calU$ is a dense open subset of $\scrR_{q,A}$.

Similar to \cite[Lemma 4.8]{CW18}, $B$ is similar to a block upper-triangular matrix with its diagonal $\diag(B_{11},B_{22},\dots,B_{ss})$, by doing some operations of interchanging the rows and columns. Following the argument for Proposition~\ref{prop:KJsn} (see \eqref{eq:Ubig}), we see there exists a dense open subset $\calU^{\scrD}$ of $\calU$ such that there exists a unique Jordan block  for $B\in \calU^{\scrD}$ associated to each eigenvalue.

Let $B\in \calU^{\scrD}$, and let $\{b_{ij} \mid 1\le j \le n_i\}$ be the set of eigenvalues of $B_{ii}$, for each $i$. It follows by the above discussion and by \cite[Lemma 4.8]{CW18} that there exists $P\in GL_n$ such that $P^{-1}BP=\diag \big(B'_{11},B'_{22},\ldots,B'_{ss} \big)$, where
\begin{align*}
B'_{ii}=\diag \big(J_{k_i+1}(b_{i1}),J_{k_i+1}(b_{i1}q^{-1}), \ldots, J_{k_i+1}(b_{i1}q^{-(t_i-1)}), J_{k_i}(b_{i1}q^{-t_i}),\ldots, J_{k_i}(b_{i1}q^{-(\ell-1)});\dots;\\
             J_{k_i+1}(b_{in_i}),J_{k_i+1}(b_{in_i}q^{-1}), \ldots, J_{k_i+1}(b_{in_i}q^{-(t_i-1)}); J_{k_i}(b_{in_i}q^{-t_i}),\ldots, J_{k_i}(b_{in_i}q^{-(\ell-1)}) \big),
\end{align*}
with $0\neq (b_{ij})^\ell\neq (b_{kl})^\ell$ for all $(i,j)\neq (k,l)$. It follows by Proposition \ref{QCmatrix} that any $C\in \scrR_{q,P^{-1}BP}$ is of the form
\[
\diag (C_{11},\ldots,C_{1n_1}, C_{21},\ldots,C_{2n_1}\ldots, C_{s1},\ldots,C_{sn_s})
\]
where the $i\times i$ matrix $C_{ij}$ (for $1\leq j\leq n_i, 1\leq i \leq s$) satisfies
\begin{eqnarray*}
&&C_{ij}\diag \big(J_{k_i+1}(b_{ij}),J_{k_i+1}(b_{ij}q^{-1}), \ldots, J_{k_i+1}(b_{ij}q^{-(t_i-1)}), J_{k_i}(b_{ij}q^{-t_i}),\ldots, J_{k_i}(b_{ij}q^{-(\ell-1)}) \big)=\\
&&q\diag \big(J_{k_i+1}(b_{ij}),J_{k_i+1}(b_{ij}q^{-1}), \ldots, J_{k_i+1}(b_{ij}q^{-(t_i-1)}), J_{k_i}(b_{ij}q^{-t_i}),\ldots, J_{k_i}(b_{ij}q^{-(\ell-1)}) \big) C_{ij}.
\end{eqnarray*}
By Lemma~\ref{lem:switch} and Proposition~\ref{prop:block a,aq^{-(l-1)}} we have that
\[
\big(C_{ij},\diag(J_{k_i+1}(b_{ij}),J_{k_i+1}(b_{ij}q^{-1}), \ldots, J_{k_i+1}(b_{ij}q^{-(t_i-1)}), J_{k_i}(b_{ij}q^{-t_i}),\ldots, J_{k_i}(b_{ij}q^{-(\ell-1)})) \big)
\]
is in $\scrK_{q, ((0,\dots,0,k_i),\bfr_{t_i})}$, where $\bfr_{t_i}$ denotes the vector with $n_i$ its $t_i$-th component.
By definition and Remark \ref{remark oplus}, this implies that $(C,P^{-1}BP)\in \scrK_{q,((0,\dots,0,p),\bfr)}$.
In particular, we have $(P^{-1}AP,P^{-1}BP) \in  \scrK_{q,((0,\dots,0,p),\bfr)}$ and hence $(A,B)\in \scrK_{q,((0,\dots,0,p),\bfr)}$ for any $B\in \calU^{\scrD}$. Therefore,  $(A,B)\in \scrK_{q,((0,\dots,0,p),\bfr)}$ for any $B\in \scrK_{q,A} =\ov{\calU^{\scrD}}$.

\end{proof}

\subsection{A covering of $\scrK_{q,n}$}

Now we are ready to formulate the main result of this section. Recall $\ML_{q,n}$ from \eqref{ML}.

\begin{prop}\label{prop:cover the entire space}
We have $\scrK_{q,n} =\bigcup_{(\bfm,\bfr)\in \ML_{q,n}} \scrK_{q,(\bfm,\bfr)}$. 
\end{prop}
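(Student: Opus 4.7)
The plan is to take an arbitrary $(A,B)\in \scrK_{q,n}$ and, after passing to an isomorphic pair under the $\GL_n$-action (which does not affect membership in $\scrK_{q,(\bfm,\bfr)}$), decompose it as a direct sum of blocks of two shapes already treated: a nilpotent-$A$ block and blocks where $A$ has eigenvalues in a single $q$-orbit. Then invoke Propositions~\ref{prop:Jordan block 0} and \ref{prop:block a,aq^{-(l-1)}} on the pieces and reassemble via Remark~\ref{remark oplus}.

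First I would set up the decomposition. Let $V=\bbC^n$ and write $V=\bigoplus_\alpha V_\alpha$ as the generalized-eigenspace decomposition of $A$. The relation $AB=qBA$ forces $B(V_\alpha)\subseteq V_{q\alpha}$: indeed, if $(A-\alpha I)^k v=0$, then a direct calculation gives $(A-q\alpha I)^k Bv = q^k B(A-\alpha I)^k v=0$. Consequently, $B$ preserves each sum of generalized eigenspaces whose indices form a single $q$-equivalence class (Definition~\ref{def: l relation}). Since $q^\ell=1$, the nonzero $q$-orbits all have size exactly $\ell$, and the orbit of $0$ is $\{0\}$. Choosing a basis compatible with this grouping yields
\[
(A,B)\ \cong\ (A_0,B_0)\oplus\bigoplus_{i=1}^{k}(A_i,B_i),
\]
where $A_0$ is nilpotent and, for each $i\geq 1$, $A_i$ has eigenvalues contained in a single orbit $\{a_i,a_iq^{-1},\ldots,a_iq^{-(\ell-1)}\}$ with $a_i\in\bbC^*$.

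For the nilpotent block $(A_0,B_0)$, put $A_0$ into Jordan normal form; then by Lemma~\ref{lem:sJordan} it is conjugate to a block matrix of the form $\diag(\mathbb{J}_{1,n_1}(0),\ldots,\mathbb{J}_{s,n_s}(0))$ for uniquely determined $n_1,\ldots,n_s\geq 0$. The corresponding transported $B_0$ lies in $\scrR_{q,A_0}$, so Proposition~\ref{prop:Jordan block 0} gives $(A_0,B_0)\in \scrK_{q,((0,\ldots,0,p),\bfr)}$ for some $p$ and $\bfr\in\bbN^{\ell-1}$. For each $i\geq 1$, put $A_i$ into Jordan normal form, obtaining $A_i$ conjugate to $\diag(J_{\nu_1^{(i)}}(a_i),\ldots,J_{\nu_\ell^{(i)}}(a_iq^{-(\ell-1)}))$ for partitions $\nu_j^{(i)}$ (some possibly empty). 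Proposition~\ref{prop:block a,aq^{-(l-1)}} then gives $(A_i,B_i)\in \scrK_{q,(\bfm^{(i)},\vec{0})}$ for some $\bfm^{(i)}\in\bbN^\ell$.

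Finally, iterating Remark~\ref{remark oplus} shows that the direct sum of all these pieces lies in $\scrK_{q,(\bfm,\bfr)}$, where $\bfm=(0,\ldots,0,p)+\sum_{i\geq 1}\bfm^{(i)}$ and $\bfr$ is as produced by the nilpotent block. Adding the $\|\cdot\|$-contributions and using $n=\dim V_0+\sum_{i\geq 1}\dim V_i$ verifies $(\bfm,\bfr)\in\ML_{q,n}$, so $(A,B)\in \scrK_{q,(\bfm,\bfr)}$. I expect the only delicate point to be the opening step: checking that the $AB=qBA$ relation respects the generalized-eigenspace grouping so cleanly; everything afterwards is a packaging of earlier propositions.
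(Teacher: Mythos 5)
Your proof is correct and follows essentially the same route as the paper: decompose $(A,B)$ (up to conjugation) into a nilpotent-$A$ block plus blocks in which the eigenvalues of $A$ lie in a single $q$-orbit, apply Propositions~\ref{prop:Jordan block 0} and \ref{prop:block a,aq^{-(l-1)}} to the pieces, and reassemble with Remark~\ref{remark oplus}. The one place you are slightly more explicit than the paper is the opening step, where you justify the block-diagonal form of $B$ via the clean observation $B(V_\alpha)\subseteq V_{q\alpha}$ on generalized eigenspaces, whereas the paper simply asserts it (relying implicitly on Proposition~\ref{QCmatrix}).
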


\begin{proof}
Let $(A,B)\in \scrK_{q,n}$. The Jordan normal form of $A$ is similar to $\diag(A_0, A_1, \ldots, A_s)$, where $A_0$ is nilpotent as in Proposition~\ref{prop:Jordan block 0}, and each $A_i$ ($i\ge 1$) consists of Jordan blocks of nonzero eigenvalues in $\{a_i, a_iq^{-1},\ldots, a_iq^{-(\ell-1)}\}$ (with Jordan blocks of eigenvalues $a_i, a_iq^{-1},\ldots, a_iq^{-(\ell-1)}$ arranged as in Proposition~ \ref{prop:block a,aq^{-(l-1)}}) such that $(a_i)^\ell \neq  (a_j)^\ell$ for $i\neq j$.  Then $B=(B_0,B_1,\cdots, B_s)$ of the same block form. By Proposition~\ref{prop:Jordan block 0}, we have $(A_0,B_0) \in \scrK_{q,((0,\dots,0,p_0),\bfr_0)}$, and by Proposition~\ref{prop:block a,aq^{-(l-1)}}, we have $(A_i,B_i) \in \scrK_{q,(\bfm_i,\vec{0})}$, for $i\ge 1$ and suitable $p_0, \bfr_0$ and $\bfm_i$.
By applying Remark \ref{remark oplus} repeatedly, we obtain $(A,B)\in \scrK_{q,(\bfm,\bfr)}$, for some $\bfr$ and $\bfm$.
\end{proof}

Together with Corollary \ref{cor: irreducible}, 
we have established the following result.
\begin{thm}
\label{thm: irreducible comp}
Let $\ell<\infty$. Then the variety $\scrK_{q,n}$ has one irreducible component $\scrK_{q,(\bfm,\bfr)}$ for each $(\bfm,\bfr)\in\ML_{q,n}$ such that
$\scrK_{q,n} =\bigcup_{(\bfm,\bfr)\in \ML_{q,n}} \scrK_{q,(\bfm,\bfr)}$.
\end{thm}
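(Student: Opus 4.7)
The plan is to assemble Theorem~\ref{thm: irreducible comp} directly from the two results proved in the sections leading up to it. First I would invoke Corollary~\ref{cor: irreducible}, which already gives that for every $(\bfm,\bfr)\in\ML_{q,n}$ the closed subvariety $\scrK_{q,(\bfm,\bfr)}$ is an irreducible component of $\scrK_{q,n}$. That corollary itself is a consequence of Theorem~\ref{prop: irreducible components for smaller n} (each $\scrD_{q,i}$ for $1\le i\le \ell$ and each $\scrN_{q,j}$ for $1\le j\le \ell-1$ is an irreducible component of $\scrK_{q,i}$, resp. $\scrK_{q,j}$) together with the vanishing $\ext^1_{\bbA_q^2}(\scrV,\scrW)=0$ of Lemma~\ref{lem:extension0} and the ``direct-sum'' criterion of Lemma~\ref{lem:C-BS}.

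Second I would invoke Proposition~\ref{prop:cover the entire space}, which asserts the covering $\scrK_{q,n}=\bigcup_{(\bfm,\bfr)\in\ML_{q,n}}\scrK_{q,(\bfm,\bfr)}$. Its proof works by taking any $(A,B)\in\scrK_{q,n}$, decomposing $A$ by its generalized eigenspaces into a nilpotent block together with blocks whose nonzero eigenvalues lie in distinct $q$-orbits $\{a_i,a_iq^{-1},\ldots,a_iq^{-(\ell-1)}\}$ with $(a_i)^\ell\neq(a_j)^\ell$ for $i\neq j$, and applying Proposition~\ref{prop:Jordan block 0} to the nilpotent block and Proposition~\ref{prop:block a,aq^{-(l-1)}} to each remaining block, with Remark~\ref{remark oplus} used to reassemble.

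Putting these two ingredients together immediately yields the theorem: the listed subvarieties are all irreducible components and they exhaust $\scrK_{q,n}$. To complete the picture one also wants to know that the assignment $(\bfm,\bfr)\mapsto \scrK_{q,(\bfm,\bfr)}$ is injective and that no other irreducible components exist. Both points are easy to dispose of. For injectivity, on the dense open piece $U_{q,1}^{\oplus m_1}\oplus\cdots\oplus U_{q,\ell}^{\oplus m_\ell}\oplus V_{q,1}^{\oplus r_1}\oplus\cdots\oplus V_{q,\ell-1}^{\oplus r_{\ell-1}}$ the Jordan type of $A$ is prescribed by $(\bfm,\bfr)$ (diagonal blocks of sizes $1,\ldots,\ell$ with multiplicities $m_1,\ldots,m_\ell$ and nilpotent Jordan blocks of sizes $1,\ldots,\ell-1$ with multiplicities $r_1,\ldots,r_{\ell-1}$), so distinct parameters produce generic points with distinct Jordan types and hence distinct components; one can also invoke Lemma~\ref{lem:not include0} to separate components by the generic ranks of $A$ and $B$. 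For exhaustion, any further irreducible component would have to be contained in the union $\bigcup \scrK_{q,(\bfm,\bfr)}$ by Proposition~\ref{prop:cover the entire space}, hence in some single $\scrK_{q,(\bfm,\bfr)}$ by irreducibility, contradicting maximality.

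There is essentially no obstacle in the proof itself: all the substantive work — the irreducibility of the pieces $\scrD_{q,i}$ and $\scrN_{q,j}$, the Ext-vanishing needed to take direct sums, and the reduction of an arbitrary $q$-commuting pair to a sum of such pieces — has already been carried out in Sections~\ref{subsec: irreducible} and \ref{sec: cover}. The theorem is the packaging step, and the proof is a two-line citation of Corollary~\ref{cor: irreducible} and Proposition~\ref{prop:cover the entire space}, supplemented by the brief distinctness remark above.
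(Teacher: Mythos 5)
Your proposal reproduces the paper's proof exactly: the theorem is established by combining Corollary~\ref{cor: irreducible} (each $\scrK_{q,(\bfm,\bfr)}$ is an irreducible component, via Theorem~\ref{prop: irreducible components for smaller n}, Lemma~\ref{lem:extension0}, and Lemma~\ref{lem:C-BS}) with Proposition~\ref{prop:cover the entire space} (the covering). Your supplementary remarks on distinctness of the $\scrK_{q,(\bfm,\bfr)}$ and on exhaustion are correct and make explicit two points the paper leaves implicit (distinctness is needed for Corollary~\ref{cor: no of irreducible}, and is indeed most cleanly seen from Lemma~\ref{lem:not include0} or from the generic Jordan type of $A$), but they do not change the route.
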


For any $t\in\bbZ_{>0}$, let $p_s(t)$ be the number of partitions of $t$ in which the largest part $\leq s$; note $p_s(0)=1$ for $s\geq0$. For convenience, set $p_0(t):=0$ for any $t>0$.

\begin{cor}
\label{cor: no of irreducible}
If $q$ is a root of unity, then the number of the irreducible components in $\scrK_{q,n}$ is $\sum_{i+j= n}p_{\ell-1}(i)p_{\ell}(j)$.
\end{cor}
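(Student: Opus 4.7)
By Theorem \ref{thm: irreducible comp}, the irreducible components of $\scrK_{q,n}$ are in bijection with the indexing set $\ML_{q,n}$, so it suffices to compute $|\ML_{q,n}|$. The plan is simply to split the condition $\|\bfm\|+\|\bfr\|=n$ as $\|\bfm\|=j$ and $\|\bfr\|=i$ with $i+j=n$, then count the $\bfm$'s and $\bfr$'s separately using the classical bijection between vectors of multiplicities and partitions.

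First, I would count $\bfm=(m_1,\dots,m_\ell)\in\bbN^{\ell}$ with $\|\bfm\|=m_1+2m_2+\cdots+\ell m_\ell=j$. Interpreting $m_k$ as the number of parts equal to $k$, these vectors are in bijection with partitions of $j$ whose largest part is at most $\ell$; by definition this number equals $p_\ell(j)$ (including $p_\ell(0)=1$ for the empty partition). By exactly the same argument, the number of $\bfr=(r_1,\dots,r_{\ell-1})\in\bbN^{\ell-1}$ with $\|\bfr\|=i$ equals $p_{\ell-1}(i)$.

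Summing the product over all decompositions $n=i+j$ with $i,j\geq 0$ yields
\[
|\ML_{q,n}| \;=\; \sum_{i+j=n} p_{\ell-1}(i)\,p_\ell(j),
\]
which by Theorem \ref{thm: irreducible comp} is also the number of irreducible components of $\scrK_{q,n}$. There is no real obstacle here; the only thing to be mildly careful about is the boundary convention $p_s(0)=1$ (handling the cases $\bfm=\vec 0$ or $\bfr=\vec 0$) and the edge case $\ell=1$, where $p_{\ell-1}(i)=p_0(i)$ forces $i=0$, leaving only $p_1(n)=1$ component, consistent with the remark that $\ML_{q,n}=\{((n),\vec 0)\}$ when $\ell=1$.
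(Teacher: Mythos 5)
Your argument is correct and is exactly what the paper means by "counting the cardinality of $\ML_{q,n}$": the vectors $\bfm\in\bbN^\ell$ with $\|\bfm\|=j$ correspond to partitions of $j$ with parts at most $\ell$, the vectors $\bfr\in\bbN^{\ell-1}$ with $\|\bfr\|=i$ to partitions of $i$ with parts at most $\ell-1$, and summing the products over $i+j=n$ gives the stated formula. The attention to the conventions $p_s(0)=1$, $p_0(t)=0$ and to the $\ell=1$ case is a helpful (and accurate) sanity check.
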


\begin{proof}
It follows by counting the cardinality of $\ML_{q,n}$.
\end{proof}

\section{Dimension of $\scrK_{q,(\bfm,\bfr)}$}
\label{sec: dim}
In this section, we calculate the dimension of every irreducible components $\scrK_{q,(\bfm,\bfr)}$.

\subsection{Dimension of $\scrD_{q,i}$ and $\scrN_{q,i}$}

\begin{lem}
\label{prop: dimension of generators}
Let $\ell<\infty$.
We have
$\dim \scrD_{q,i}=i^2$, $\dim \scrD_{q,\ell}=\ell^2+1$, and $\dim \scrN_{q,i}=i^2$ for any $1\leq i \leq \ell-1$.
\end{lem}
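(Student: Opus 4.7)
The plan is to apply the fiber-dimension theorem to the three dominant morphisms $\phi_{q,i}$, $\phi_{q,\ell}$, $\psi_{q,j}$ constructed in \eqref{eqn:phii}--\eqref{eqn:psii}. For each morphism, the image is dense in the relevant variety (by definition of $\scrD_{q,i}$, $\scrD_{q,\ell}$, $\scrN_{q,j}$), so $\dim(\text{target}) = \dim(\text{source}) - \dim(\text{generic fiber})$. The key observation, shared across all three cases, is that the generic fiber dimension equals the dimension of the centralizer in $\GL_i$ of the corresponding matrix $A$.

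For $\scrD_{q,i}$ with $i<\ell$, the dominant part of the source has dimension $i^2 + 1 + (i-1) = i^2 + i$. For a generic image point $(A',B')$, since $i<\ell$ the eigenvalues of $A'$ are pairwise distinct, so the parameter $a$ is uniquely recovered as the unique eigenvalue $\alpha$ of $A'$ such that $q\alpha$ is not an eigenvalue. The group element $g$ is then determined up to right multiplication by $Z_{\GL_i}(A)$, which equals the diagonal torus $T\cong(\bbC^*)^i$ of dimension $i$; once $g$ is fixed, the tuple $\vec{b}$ is uniquely recovered from $gB_{\vec b}g^{-1}=B'$ because the parametrization $\vec b\mapsto B_{\vec b}$ is an embedding. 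Hence the generic fiber has dimension $i$ and $\dim\scrD_{q,i}=i^2+i-i=i^2$.

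For $\scrD_{q,\ell}$, the source has dimension $\ell^2 + 1 + \ell$. The analysis proceeds as in the previous case, except now the $\ell$ eigenvalues $\{a,aq^{-1},\ldots,aq^{-(\ell-1)}\}$ of $A'$ form a complete $q$-chain, which is invariant under the substitution $a\mapsto aq^{-1}$. There are thus $\ell$ admissible choices for $a$, giving the fiber $\ell$ connected components, each a $T$-orbit of dimension $\ell$; the fiber dimension remains $\ell$. Therefore $\dim\scrD_{q,\ell}=\ell^2+1+\ell-\ell=\ell^2+1$.

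For $\scrN_{q,j}$ with $j<\ell$, the source has dimension $j^2+j$. The centralizer of $J_j(0)$ in $\GL_j$ is the group of invertible upper-triangular Toeplitz matrices, of dimension $j$. For a generic image point $(A',B')$, the element $g$ is determined up to this centralizer, and then $\vec{a}$ is recovered uniquely from $B'$ using that every matrix in $\scrR_{q,J_j(0)}$ is a $q$-layered matrix (Lemma \ref{Jordan q-layer}) together with the injectivity of the parametrization $\vec a\mapsto \L^q(j,j,\vec a)$. Hence the generic fiber has dimension $j$, and $\dim\scrN_{q,j}=j^2+j-j=j^2$. The only routine point to verify in each case is that the centralizer actually acts freely on the fibers (i.e., distinct $t\in T$ or $z\in Z_{\GL_j}(J_j(0))$ give distinct points of the source), which is immediate since $gt\ne gt'$ whenever $t\ne t'$; this is the main place where care is needed, but it presents no real obstacle.
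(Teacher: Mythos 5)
Your proof is correct and follows essentially the same approach as the paper: both apply the fiber-dimension theorem to the parametrizing morphisms $\phi_{q,i}$, $\phi_{q,\ell}$, $\psi_{q,j}$, and the paper's explicit parametrization of the fiber by $(h_1,u_1,\ldots,u_{i-1})\in(\bbC^*)^i$ is exactly your identification of the fiber with the diagonal torus $Z_{\GL_i}(A)$. The only cosmetic difference is that the paper handles $\scrN_{q,j}$ by transporting the result via the isomorphism $\theta:\scrK_{q,n}\to\scrK_{q^{-1},n}$ of Lemma~\ref{lem:switch}, whereas you compute it directly using the centralizer of $J_j(0)$; this is a legitimate shortcut that avoids one appeal to $\theta$ but does not change the substance.
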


\begin{proof}
Recall the definition of $\calD_{q,i}$ in Remark \ref{rem: calD calN} (i). Then $\calD_{q,i}$ is a dense open set of $\scrD_{q,i}$.
It suffices to show that $\dim \calD_{q,i}=i^2$.

Recall the morphism $\phi_{q,i}$ in \eqref{eqn:phii}.
Denote by $\calV=\phi^{-1}_{q,i}(\calD_{q,i})$ the open set in $\GL_i\times \bbC\times \bbC^{i-1}$,
and then the restriction of $\phi_{q,i}$ to $\calV$ is also regular, which is also denoted by $\phi_{q,i}$.
Clearly, $\calD_{q,i}$ and $\calV$ are irreducible varieties, and $\dim \calV=i^2+1+i-1=i^2+i$.

For any $(A,B)\in \calD_{q,i}$, there exists $g\in\GL_i$
such that
\begin{align*}
A'&=g^{-1}Ag=\diag(a,aq^{-1},\ldots, aq^{-(i-1)});\\
B'&=g^{-1}Bg=\begin{bmatrix}
0   & b_1    &        &              &          \\
    &  0     & b_2    &              &        \\
    &        &\ddots   &\ddots        &        \\
    &        &        &   0          & b_{i-1}  \\
    &        &        &              &  0
\end{bmatrix}.
\end{align*}

A point in $\phi^{-1}_{q,i}(A',B')$ is of the form
\[
\big(h,a,(c_1,c_2,\ldots,c_{i-1})\big)\in \GL_i\times \bbC\times \bbC^{i-1}
\]
such that
\begin{eqnarray}\label{eq:ci}
h\diag(a,aq^{-1},\ldots, aq^{-(i-1)})h^{-1}=A',\ \
h \begin{bmatrix}
0   & c_1    &        &              &          \\
    &  0     & c_2    &              &        \\
    &        &\ddots   &\ddots        &        \\
    &        &        &   0          & c_{i-1}  \\
   &        &        &              &  0
\end{bmatrix}h^{-1}=B'.
\end{eqnarray}
A point in $\phi^{-1}_{q,i}(A,B)$ is of the form
\[
\big(gh,a,(c_1,c_2,\ldots,c_{i-1})\big)\in GL_i\times \bbC\times \bbC^{i-1}.
\]
So $\dim \phi^{-1}_{q,i}(A,B)= \dim \phi^{-1}_{q,i}(A',B') $.

From \eqref{eq:ci}, we have
\[
h=\diag(h_1,h_2,\ldots,h_i)
\]
with $h_j$ nonzero for any $1\leq j\leq i$, and $h_jc_j=b_jh_{j+1}$ for any $1\leq j\leq i-1$.
Then any point in $\phi^{-1}_{q,i}((A',B'))$ is of the form
\[
\big(h,a,(u_1b_1,u_2b_2,\ldots,u_{i-1}b_{i-1})\big),
\]
where $h=\diag(h_1,u_1h_1,\ldots, u_1u_2\cdots u_{i-1}h_1)$ for some nonzero numbers $u_1,\ldots, u_{i-1},h_1$.
So $\dim\phi^{-1}_{q,i}((A',B'))=i$. By a standard result in algebraic geometry, we have
$$\dim \calD_{q,i}=\dim \calV-\dim \phi^{-1}_{q,i}((A',B'))=i^2.$$

As $\scrN_{q,i}$ and $\scrD_{q^{-1},i}$ are isomorphic by Lemma \ref{lem:switch},
we have $\dim\scrN_{q,i}=i^2$ for any $1\leq i\leq \ell-1$. The proof is complete.

For the dimension of $\scrD_{q,\ell}$, its proof is similar to the above, hence omitted here.

\end{proof}

\subsection{Dimension of $\scrK_{q,(\bfm,\bfr)}$}

Using Lemma \ref{prop: dimension of generators} and Lemma \ref{lem:C-BS},  we can determine the dimension of  $\scrK_{q,(\bfm,\bfr)}$.

\begin{thm}
\label{prop: dimension of Cl}
For any $(\bfm,\bfr)\in\ML_{q,n}$, we have $\dim \scrK_{q,(\bfm,\bfr)}=n^2+m_\ell$.
\end{thm}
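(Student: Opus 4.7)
The plan is to apply the Crawley-Boevey--Schr\"oer dimension formula (Lemma~\ref{lem:C-BS}) to the canonical decomposition of $\scrK_{q,(\bfm,\bfr)}$. View $\scrK_{q,(\bfm,\bfr)}$ as the closure of a direct sum of $t=\sum_{i=1}^{\ell}m_i+\sum_{j=1}^{\ell-1}r_j$ basic irreducible components (with multiplicities): specifically $m_i$ copies of $\scrD_{q,i}$ for each $1\le i\le \ell$ and $r_j$ copies of $\scrN_{q,j}$ for each $1\le j\le \ell-1$. Label these components $\calC_1,\dots,\calC_t$ and let $n_k$ denote the ambient size of $\calC_k$ (so $n_k=i$ if $\calC_k=\scrD_{q,i}$ or $\calC_k=\scrN_{q,i}$). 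Then $\sum_k n_k=\|\bfm\|+\|\bfr\|=n$.

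By Lemma~\ref{lem:extension0}, for any two (not necessarily distinct) basic pieces $\scrV,\scrW$ in the collection $\{\scrD_{q,1},\ldots,\scrD_{q,\ell},\scrN_{q,1},\ldots,\scrN_{q,\ell-1}\}$ we have $\hom_{\bbA_q^2}(\scrV,\scrW)=\ext^1_{\bbA_q^2}(\scrV,\scrW)=0$; in particular the hypothesis $\ext^1_{\bbA_q^2}(\calC_k,\calC_{k'})=0$ of Lemma~\ref{lem:C-BS} is satisfied for all $k\ne k'$, so the formula
\[
\dim\scrK_{q,(\bfm,\bfr)}=\sum_{k=1}^{t}\dim\calC_k+\sum_{k\ne k'}\bigl(n_kn_{k'}-\hom_{\bbA_q^2}(\calC_k,\calC_{k'})\bigr)
\]
applies, and the Hom-terms all vanish.

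Finally, substitute the dimensions from Lemma~\ref{prop: dimension of generators}: $\dim\scrD_{q,i}=i^{2}$ for $1\le i\le \ell-1$, $\dim\scrD_{q,\ell}=\ell^{2}+1$, and $\dim\scrN_{q,j}=j^{2}$ for $1\le j\le\ell-1$. Setting $S:=\sum_{i=1}^{\ell}i^{2}m_i+\sum_{j=1}^{\ell-1}j^{2}r_j=\sum_k n_k^{2}$, the first sum evaluates to $S+m_\ell$ (the extra $m_\ell$ coming from the $+1$ in $\dim\scrD_{q,\ell}$), while the second sum equals
\[
\sum_{k\ne k'}n_kn_{k'}=\Bigl(\sum_k n_k\Bigr)^{2}-\sum_k n_k^{2}=n^{2}-S.
\]
Adding these gives $\dim\scrK_{q,(\bfm,\bfr)}=n^{2}+m_\ell$, as desired. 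There is no real obstacle: once Lemmas~\ref{lem:extension0}, \ref{lem:C-BS}, and \ref{prop: dimension of generators} are in hand, the argument is a bookkeeping computation, and the only point requiring attention is that Lemma~\ref{lem:extension0} must be invoked for pairs of the \emph{same} basic type (e.g.\ two copies of $\scrD_{q,i}$), which is covered by its statement since the proof picks generic representatives with eigenvalues that are not $q$-equivalent.
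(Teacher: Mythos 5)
Your proof is correct and follows essentially the same route as the paper: both invoke Lemma~\ref{lem:C-BS} applied to the decomposition into $m_i$ copies of $\scrD_{q,i}$ and $r_j$ copies of $\scrN_{q,j}$, use Lemma~\ref{lem:extension0} to kill all the Hom- and Ext-terms, and then plug in the dimensions from Lemma~\ref{prop: dimension of generators}. The only difference is cosmetic: the paper expands the double sum $\sum_{k\ne k'}n_kn_{k'}$ term by term into a long display, whereas you collapse it via the identity $\sum_{k\ne k'}n_kn_{k'}=(\sum_k n_k)^2-\sum_k n_k^2=n^2-S$, which is tidier; and your parenthetical remark that Lemma~\ref{lem:extension0} also covers the case $\scrV=\scrW$ is a valid and worthwhile observation that the paper leaves implicit.
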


\begin{proof}
By definition,
the variety $\scrK_{q,(\bfm,\bfr)}$ is defined to be
$$
\ov{\scrD_{q,1}^{\oplus m_1}\oplus \cdots\oplus\scrD_{q,\ell}^{\oplus m_\ell}\oplus
\scrN_{q,1}^{\oplus r_1}\oplus \cdots\oplus\scrN_{q,\ell-1}^{\oplus r_{\ell-1}}}.
$$
From Proposition \ref{prop:cover the entire space} and Lemma \ref{lem:C-BS}, we have
\begin{align*}
&\dim \scrK_{q,(\bfm,\bfr)} \\
=&\sum_{i=1}^{\ell }m_i\dim \scrD_{q,i}+ \sum_{i=1}^{\ell -1} r_i\dim\scrN_{q,i} + m_\ell (m_\ell -1)\ell ^2+\sum_{i=1}^{\ell -1}(\ell m_\ell) (im_i)
+\sum_{i=1}^{\ell -1}(\ell m_\ell) (ir_i) \\
&+ \sum_{i=1}^{\ell -1} (im_i)(\ell m_\ell) +\sum_{j=1,j\neq i}^{\ell -1} \sum_{i=1}^{\ell -1} (im_i)(jm_j)+ \sum_{i=1}^{\ell -1} m_i(m_i-1)i^2+ \sum_{j=1}^{\ell -1} \sum_{i=1}^{\ell -1} (im_i)(jr_j)\\
&+ \sum_{i=1}^{\ell -1} (ir_i)(\ell m_\ell) +\sum_{j=1}^{\ell -1} \sum_{i=1}^{\ell -1} (ir_i)(jm_j)+ \sum_{j=1,j\neq i}^{\ell -1} \sum_{i=1}^{\ell -1} (ir_i)(jr_j)+\sum_{i=1}^{\ell -1} r_i(r_i-1)i^2
\end{align*}
since for any $\scrV,\scrW\in \{ \scrD_{q,\ell },\scrD_{q,1}\dots,\scrD_{q,\ell -1}, \scrN_{q,1} ,\dots, \scrN_{q,\ell -1} \}$ we have
$\hom_{\bbA_q^2}(\scrV,\scrW)=0$ by Lemma \ref{lem:extension0}.
It follows from Proposition \ref{prop: dimension of generators} that
\begin{eqnarray*}
\dim \scrK_{q,(\bfm,\bfr)} &=& (m_\ell\ell +\sum_{i=1}^{\ell -1} (im_i)+\sum_{i=1}^{\ell -1} r_ii)^2+m_\ell\\
&=&n^2+m_\ell,
\end{eqnarray*}
by noting that $\ell m_\ell +\sum_{i=1}^{\ell -1} im_i+\sum_{i=1}^{\ell -1} i r_i=n$.


\end{proof}










\section{GIT quotients}

 \label{sec:GIT}

In this section we shall study the GIT quotient $\scrK_{q,n}//GL_n$.
%
%
\subsection{Preliminaries on GIT quotients}

The group $G=GL_n$ acts on
\[
X_n=M_n(\bbC)\times M_n(\bbC)
\]
by letting
\[
g\cdot (A,B) =(g Ag^{-1},gBg^{-1}), \qquad
\text{ for }g\in G,(A,B)\in X_n.
\]
Denote by $\bbC[X_n]$ the coordinate ring of $X_n$, and by $\bbC[X_n]^G\subseteq \bbC[X_n]$ the subring of invariants. By a theorem of Hilbert, $\bbC[X_n]^G$ is a finitely generated $\bbC$-algebra since $G$ is reductive. The following result is due to Gurevich, Procesi and Sibirskii; see \cite[Theorem 2.7.9]{Sch07}.

\begin{prop}
  \label{prop:trace}
The invariant ring $\bbC[X_n]^G$ is generated by $\mathrm{Trace}(A^iB^j)$,  for all $i,j\geq0.$
\end{prop}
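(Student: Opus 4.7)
The approach is to invoke the classical first fundamental theorem of matrix invariants in characteristic zero, for pairs of $n\times n$ matrices under simultaneous conjugation, as treated in the cited reference. My plan is to outline the standard three-step derivation, reducing the problem successively to multilinear invariants, to symmetric-group combinatorics, and finally to trace relations.

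\textbf{Step 1 (Polarization).} Since $G$ is reductive and $\mathrm{char}\,\bbC=0$, the Reynolds operator $R\colon\bbC[X_n]\twoheadrightarrow\bbC[X_n]^G$ is surjective, so it suffices to produce, for each monomial in the entries of $A$ and $B$, an expression of its Reynolds image as a polynomial in the traces $\mathrm{Trace}(A^iB^j)$. By Weyl's polarization process, I would introduce independent matrix variables $A_1,\ldots,A_p,B_1,\ldots,B_q$, so that the problem is transferred to finding all multilinear $G$-invariants in these variables; restitution $A_i\!\mapsto\!A$, $B_j\!\mapsto\!B$ then recovers the original invariant.

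\textbf{Step 2 (Schur--Weyl duality).} The space of multilinear $GL_n$-invariants in $(\mathrm{End}\,V)^{\otimes(p+q)}$ under simultaneous conjugation is spanned by the trace-cycle functionals indexed by permutations $\sigma\in S_{p+q}$:
\[
T_\sigma(X_1,\ldots,X_{p+q})=\prod_{c\text{ cycle of }\sigma}\mathrm{Trace}\Bigl(\prod_{i\in c}X_i\Bigr).
\]
Applied with $X_i=A_i$ ($i\le p$), $X_{p+j}=B_j$, and then restituted, this shows that $\bbC[X_n]^G$ is generated as a $\bbC$-algebra by traces of arbitrary words in $A$ and $B$.

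\textbf{Step 3 (Reduction to $\mathrm{Trace}(A^iB^j)$).} The final and most delicate step is to reduce every trace of a word in $A,B$ to a polynomial in the traces $\mathrm{Trace}(A^iB^j)$. Here one applies the Cayley--Hamilton theorem, which expresses $M^n$ as a $\bbC$-polynomial in $I,M,\ldots,M^{n-1}$ with coefficients in $\mathrm{Trace}(M^k)$ for $k\le n$. Using this for $M=A$ and $M=B$ bounds the pure-power exponents, while its polarized forms (applied to linear combinations such as $xA+yB$ and comparing multidegrees) together with the cyclicity of trace allow one to straighten interleaved monomials.

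The main obstacle is precisely Step 3: Cayley--Hamilton applied directly to $A$ and $B$ only controls pure powers, and genuinely interleaved traces such as $\mathrm{Trace}(A^{i_1}B^{j_1}A^{i_2}B^{j_2})$ require the polarized Cayley--Hamilton identity (equivalently the fundamental trace identity of Razmyslov--Procesi) to be expressed in terms of the simpler traces $\mathrm{Trace}(A^iB^j)$. This combinatorial reduction is the technical heart of the Gurevich--Procesi--Sibirskii theorem, and for the purposes of this paper it is quoted directly from \cite[Theorem 2.7.9]{Sch07} rather than redeveloped.
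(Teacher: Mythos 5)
The paper offers no argument for this proposition at all: it is quoted verbatim from \cite[Theorem 2.7.9]{Sch07} (Gurevich--Procesi--Sibirskii), so the only question is whether your sketch genuinely establishes the stated claim. Your Steps 1 and 2 are correct and standard: in characteristic zero, polarization/restitution together with Schur--Weyl duality show that $\bbC[X_n]^G$ is generated by traces of \emph{arbitrary} words in $A$ and $B$. That is the actual content of the cited first fundamental theorem, and nothing later in the paper depends on more than this (the subsequent GIT arguments only use Theorem \ref{fundamental theorem of GIT}).

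The genuine gap is Step 3. The reduction of traces of interleaved words to polynomials in the special traces $\mathrm{Trace}(A^iB^j)$ is not merely ``delicate''; it is impossible for $n\geq 3$. Indeed, every $\mathrm{Trace}(A^iB^j)$ is fixed by the involution $(A,B)\mapsto (A^{\mathrm{T}},B^{\mathrm{T}})$, because transposition reverses a word and $\mathrm{Trace}(B^jA^i)=\mathrm{Trace}(A^iB^j)$ by cyclicity; hence the subalgebra these functions generate consists of functions fixed by this involution. On the other hand, for $n=3$ the conjugation-invariant function $\mathrm{Trace}([A,B]^3)$ changes sign under the involution (since $[A^{\mathrm{T}},B^{\mathrm{T}}]=-[A,B]^{\mathrm{T}}$) and is not identically zero (every trace-zero matrix is a commutator, and $\mathrm{Trace}(C^3)=-6\neq 0$ for $C=\diag(1,1,-2)$), so it cannot lie in that subalgebra. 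Consequently no amount of Cayley--Hamilton or Razmyslov--Procesi straightening can complete Step 3 as you formulate it, and your closing move --- quoting \cite[Theorem 2.7.9]{Sch07} for precisely this reduction --- makes the argument circular, since that is the statement under proof. The upshot is that the proposition should be read (and is only ever used) in the form your Steps 1--2 actually deliver: $\bbC[X_n]^G$ is generated by traces of all monomials in $A$ and $B$; the literal statement with only the words $A^iB^j$ holds for $n\leq 2$ but fails in general.
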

We consider the GIT quotient of $X_n$ with respect to the action of $G$. A standard reference of geometric invariant theory (GIT) is the book \cite{MFK94}. 
Define
\[
X_n// G:=\mathrm{Specmax}(\bbC[X_n]^G).
\]
The inclusion
$\bbC[X_n]^G\subseteq \bbC[X_n]$ gives rise to a $G$-invariant morphism
\[
\pi: X_n\longrightarrow X_n// G.
\]
Some fundamental results of GIT in our setting are summarized below (see \cite{MFK94}).
\begin{thm}
 \label{fundamental theorem of GIT}
 The following statements hold.
\begin{itemize}
\item[(1)]   If $W_1$ and $W_2$ are two disjoint non-empty $G$-invariant closed subsets of $X_n$, then there is a $G$-invariant function $f\in \bbC[X_n]^G$ such that $f|_{W_1}\equiv1$ and $f|_{W_2}\equiv0$. In particular, the images of $W_1$ and $W_2$ under $\pi$ are disjoint.
\item[(2)] Let $v\in X_n$. Then the orbit closure $\ov{G\cdot v}$ contains a unique closed orbit.
\item[(3)] The map $\pi: X_n\rightarrow X_n// G$ induces a bijection between the set of closed orbits in $X_n$ and the points of $X_n// G$.
\end{itemize}
\end{thm}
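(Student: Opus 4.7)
The plan is to derive all three statements from the standard machinery of geometric invariant theory for a reductive group acting on an affine variety, exactly as in \cite{MFK94}. The central technical tool is the Reynolds operator $R: \bbC[X_n] \to \bbC[X_n]^G$ associated to the reductive group $G=\GL_n$: a $G$-equivariant $\bbC$-linear projection that is the identity on invariants and that commutes with multiplication by invariants. All three assertions will follow essentially formally once $R$ is in hand.

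For (1), I would start from the affine Nullstellensatz: disjoint closed subsets $W_1,W_2\subseteq X_n$ give coprime vanishing ideals, so $1=f_1+f_2$ with $f_i\in I(W_i)$. Since $W_i$ is $G$-invariant, $I(W_i)$ is a $G$-stable ideal, and $R$ maps it into $I(W_i)^G=I(W_i)\cap\bbC[X_n]^G$. Applying $R$ to $1=f_1+f_2$ produces $1=R(f_1)+R(f_2)$ with $R(f_i)\in I(W_i)^G$; the function $f:=R(f_2)$ then satisfies $f|_{W_1}\equiv 1$ and $f|_{W_2}\equiv 0$. Disjointness of the images $\pi(W_1),\pi(W_2)$ in $X_n//G$ is then immediate since $f$ descends to a regular function on $X_n//G$ that separates them.

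For (2), existence of a closed orbit inside $\overline{G\cdot v}$ follows from a standard minimal-dimension argument: pick $G\cdot w\subseteq\overline{G\cdot v}$ of smallest dimension; by the general orbit-closure structure theorem, $\overline{G\cdot w}\setminus(G\cdot w)$ is a (closed) union of orbits of strictly smaller dimension, hence empty, so $G\cdot w$ is closed. Uniqueness is where (1) enters: if $O_1,O_2\subseteq\overline{G\cdot v}$ were two distinct closed $G$-orbits, they are disjoint $G$-invariant closed subsets, so by (1) some $f\in\bbC[X_n]^G$ takes different values on them; but every $G$-invariant function is constant on each orbit closure and in particular takes the same value at $v$ and at any point of $\overline{G\cdot v}$, giving a contradiction.

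For (3), the map sending a closed orbit $O$ to $\pi(O)$ is well-defined (it is a single point, since $\pi$ is constant on orbits and $O$ is closed, hence its image is closed and irreducible of dimension zero in the affine variety $X_n//G$); injectivity is immediate from (1); surjectivity follows from (2) applied to any $v$ in the fiber over a given point of $X_n//G$, which contains a unique closed orbit. The main ``obstacle'' is purely expository rather than mathematical: the arguments are the familiar ones from \cite[Chapter 1]{MFK94} and \cite[Sec.~2.3]{Sch07}, so the task is merely to record enough of the reductivity and affineness hypotheses to invoke them cleanly in the present setting $X_n=M_n(\bbC)\times M_n(\bbC)$ with the simultaneous conjugation action of $\GL_n$.
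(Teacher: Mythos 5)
The paper does not actually prove this theorem: it is quoted as a standard package of facts from geometric invariant theory with a pointer to \cite{MFK94}, so there is no internal argument to compare yours against. Your proof is the standard one (Reynolds operator plus the Nullstellensatz for the separation statement, a minimal-dimension orbit for existence of a closed orbit, and (1)+(2) for the bijection) and is essentially correct. The only step you invoke without justification is the surjectivity of $\pi$ itself in part (3): saying ``apply (2) to any $v$ in the fibre over a given point'' presupposes that every fibre of $\pi$ is nonempty. This is true and follows in one line from the tool you already set up: for a maximal ideal $\frakm\subset\bbC[X_n]^G$, the Reynolds operator is a $\bbC[X_n]^G$-module projection, so $R\big(\frakm\,\bbC[X_n]\big)\subseteq\frakm$; hence $1\notin\frakm\,\bbC[X_n]$, the extended ideal is proper, and any maximal ideal of $\bbC[X_n]$ containing it gives a point of $X_n$ mapping to the chosen point of $X_n//G$. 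With that sentence added, your argument is complete and matches the references the paper relies on.
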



%
%
\subsection{The GIT quotient $\scrK_{q,n}//\GL_n$}

Since $\scrK_{q,n}\subseteq X_n$ is a $G$-invariant closed subset of a $G$-module, the surjection $\bbC[X_n]\rightarrow \bbC[\scrK_{q,n}]$ is $G$-equivariant and this leads to the following commutative diagrams
\[
\xymatrix{ \bbC[X_n]^{G} \ar@{->>}[r] \ar@{^(->}[d] & \bbC[\scrK_{q,n}]^{G} \ar@{^(->}[d] \\
\bbC[X_n] \ar@{->>}[r]  & \bbC[\scrK_{q,n}] }
 \qquad\qquad
 \xymatrix{ \scrK_{q,n} \ar@{^(->}[r] \ar@{->>}[d] & X_n \ar@{->>}[d] \\
\scrK_{q,n}//{G} \ar@{^(->}[r]  & X_n//{G}
 }\]


Introduce the indexing set
\begin{equation}
\label{TPL}
\TPL_{q,n} :=\{(p,m,r) \in \bbZ_{\geq0}^3\mid \ell p+m+r=n \}.
\end{equation}
Note that if $\ell=1$, i.e., $q=1$, we have $\TPL_{q,n}=\{(n,0,0) \}$.

\subsection{}
In this subsection, we assume that $1\neq\ell<\infty$. For any $(p,m,r)\in\TPL_{q,n}$, define
\begin{align}
\label{def:Zpmr}
\scrZ_{p,m,r}&:= \ov{\scrD_{q,\ell}^{\oplus p}\oplus\scrD_{q,1}^{\oplus m}\oplus \scrN_{q,1}^{\oplus r}},\\
\calU_{p,m,r}&:=\{(A,B)\in U_{q,\ell}^{\oplus p}\oplus U_{q,1}^{\oplus m} \oplus V_{q,1}^{\oplus r}\mid {\rm rank }A=\ell p+m, {\rm rank}B=\ell p+r,\\ &\notag \qquad \text{ and the nonzero eigenvalues of }A,B\text{ are not }q\text{-equivalent}. \}\label{def:Upmr}
\end{align}
Then $\calU_{p,m,r}$ is a nonempty open subset of $\scrZ_{p,m,r}$ by Definition \ref{rem: calD calN}.

Obviously, $\scrZ_{p,m,r}=\scrK_{(m,0,\dots,0,p),(r,0,\dots,0)}$ which is an irreducible component of $\scrK_{q,n}$.

\begin{lem}
  \label{lem:U polystable}
  For any $(p,m,r)\in\TPL_{q,n}$, and $(A,B)\in  \calU_{p,m,r}$, the ${G}$-orbit of $(A,B)$ in $\scrK_{q,n}$ is closed.
\end{lem}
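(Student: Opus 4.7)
The plan is to show that $M(A,B)$ is a semisimple $\bbA_q^2$-module for every $(A,B)\in \calU_{p,m,r}$, and then invoke the standard fact (see, e.g., \cite{CB-S}) that the $\GL_n$-orbit of a semisimple module in $\mod^n(R)$ is closed.

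First I would verify that each of the three building blocks of $\calU_{p,m,r}$ yields a simple $\bbA_q^2$-module. The summands coming from $U_{q,1}$ and from $V_{q,1}$ are one-dimensional, and hence trivially simple. For a summand $(A_0,B_0) \in U_{q,\ell}$, the matrix $A_0=\diag(a,aq^{-1},\ldots,aq^{-(\ell-1)})$ has $\ell$ distinct nonzero eigenvalues, so every $A_0$-invariant subspace is the linear span of some subset $I$ of the standard basis $\{e_1,\ldots,e_\ell\}$. Inspecting \eqref{eqn: form B} with all $b_j\neq 0$, one sees that $B_0$ acts on $\{e_1,\ldots,e_\ell\}$ as a single $\ell$-cycle, so the only $B_0$-stable subsets are $\emptyset$ and $\{1,\ldots,\ell\}$; hence $M(A_0,B_0)$ is simple. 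Consequently $M(A,B)$, being a direct sum of these simple pieces, is semisimple.

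To conclude that $G\cdot(A,B)$ is closed, I would use the GIT machinery already in place: by Theorem~\ref{fundamental theorem of GIT} each orbit closure contains a unique closed orbit, and by Proposition~\ref{prop:trace} the invariants $\mathrm{Trace}(A^iB^j)$ generate $\bbC[X_n]^G$. Standard arguments (Artin, or \cite{CB-S}) show that these traces separate isomorphism classes of semisimple representations, so the unique closed orbit inside $\ov{G\cdot(A,B)}$ must already be $G\cdot(A,B)$ itself, proving the claim.

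The main subtle point, and the one I expect to require the most care to phrase precisely, is ruling out that a nontrivial extension of our simple summands could arise in $\ov{G\cdot(A,B)}$. This is exactly what the non-$q$-equivalence and full-rank conditions baked into the definition of $\calU_{p,m,r}$ prevent: by Proposition~\ref{QCmatrix} and the reasoning of Lemma~\ref{lem:extension0}, all $\Ext^1$-groups between the distinct simple summands vanish, so the only semisimple module in the closure with the same trace data is $M(A,B)$ itself. Once this is noted, the closed-orbit statement follows without any ad hoc degeneration analysis.
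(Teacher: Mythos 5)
Your proof follows the same route as the paper: normalize $(A,B)$ into block-diagonal form, observe that each block is a simple $\bbA_q^2$-module (the one-dimensional blocks trivially, the $U_{q,\ell}$-block because $A_0$ has distinct eigenvalues and $B_0$ cyclically permutes the corresponding eigenlines), conclude $M(A,B)$ is semisimple, and invoke Artin's theorem that semisimple modules have closed $\GL_n$-orbits. You actually verify simplicity of the $U_{q,\ell}$-block in more detail than the paper, which simply states that the module is ``clearly'' semisimple and cites Artin.

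One small caveat: your final paragraph about ``ruling out that a nontrivial extension of our simple summands could arise in $\ov{G\cdot(A,B)}$'' is unnecessary and slightly misaimed. Once semisimplicity of $M(A,B)$ is established, Artin's theorem immediately yields that the orbit is closed; there is nothing further to rule out. Moreover, degenerations run in the opposite direction from what you suggest: a nontrivial extension $E$ of $S$ by $T$ degenerates to $S\oplus T$, so $S\oplus T$ lies in $\ov{G\cdot E}$, not the other way around. The non-$q$-equivalence and full-rank conditions in $\calU_{p,m,r}$ are not needed for the present lemma --- they matter for computing stabilizer dimensions in the proof of Proposition~\ref{prop:pure dimension}, not for closedness of the orbit.
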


\begin{proof}
Let $(A,B)\in  \calU_{p,m,r}$. By definition, there exists some $g\in {G}$ such that
\begin{eqnarray}
gAg^{-1} &=&{\rm diag}((a_1,\ell)_q, \dots, (a_{p},\ell)_q,a_{p+1},\dots,a_{p+m},\underbrace{0,\dots,0}_r), \label{eq:AST1}\\
gBg^{-1} &=& {\rm diag}(C_1,\dots,C_p,\underbrace{0,\dots,0}_{m}, b_{p+1},\dots ,b_{p+r}),\label{eq:AST2}
\end{eqnarray}
where $C_i$ is invertible and of the form as \eqref{eqn: form B} shows
for $1\leq i\leq p$, and $b_{p+1}, \ldots, b_{p+r}$ are distinct nonzero scalars.

Consider the quantum plane $\bbA_q^2$. Clearly, the $\bbA_q^2$-module corresponding to $(A,B)$, or equivalently to $(gAg^{-1}, gBg^{-1})$, is semisimple by the description \eqref{eq:AST1}--\eqref{eq:AST2}, and hence its orbit is closed; see \cite[\S 12.6]{Ar69}.
\end{proof}

\begin{lem}
\label{lem: closed 1}
For any $(A,B)\in\scrK_{q,n}$, if the ${G}$-orbit of $(A,B)$ is closed, then there exists $(p,m,r)\in\TPL_{q,n}$ such that $(A,B)\in\scrZ_{p,m,r}$.
\end{lem}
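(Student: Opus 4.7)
The plan is to invoke the classical fact (cf.~\cite[\S 12.6]{Ar69}, as used for Lemma~\ref{lem:U polystable}) that the $G$-orbit of $(A,B)$ in $\mod^n(\bbA_q^2) \cong \scrK_{q,n}$ is closed if and only if the associated $\bbA_q^2$-module $M(A,B)$ is semisimple. Granting this, I would classify the finite-dimensional simple $\bbA_q^2$-modules (recall $\ell<\infty$) into three types and then decompose $M(A,B)$ into simples.

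For the classification, let $M$ be a simple finite-dimensional $\bbA_q^2$-module. The subspace $\ker A$ is an $\bbA_q^2$-submodule: it is plainly $A$-stable, and for $v \in \ker A$ the identity $A(Bv) = qB(Av) = 0$ shows it is also $B$-stable. By simplicity either $A = 0$ or $A$ is invertible, and symmetrically for $B$. If $A = 0$, then $M$ is simple as a $\bbC[y]$-module (with $y$ acting as $B$), hence one-dimensional, so $(A,B) \in \scrN_{q,1}$; similarly if $B = 0$ then $(A,B) \in \scrD_{q,1}$. Otherwise both are invertible, and picking any eigenvector $v$ of $A$ with $Av = av$ and $a \in \bbC^*$, the relation $AB = qBA$ gives $A(B^i v) = q^i a\, B^i v$. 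Since $q^\ell = 1$ and $a \neq 0$, the $\ell$ scalars $a, qa, \ldots, q^{\ell-1}a$ are distinct, so $v, Bv, \ldots, B^{\ell-1} v$ are linearly independent eigenvectors, while $B^\ell v$, being an $A$-eigenvector of eigenvalue $a$, is a (nonzero) scalar multiple of $v$. Their span is therefore an $\bbA_q^2$-submodule, equal to $M$ by simplicity, so $\dim M = \ell$, and after permuting this basis $(A,B)$ is conjugate to an element of $\scrD_{q,\ell}^\circ \subseteq \scrD_{q,\ell}$.

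Now decompose the semisimple $M(A,B)$ into simples, obtaining say $p$ summands of dimension $\ell$, $m$ one-dimensional summands of the form $(a,0) \in \scrD_{q,1}$, and $r$ one-dimensional summands of the form $(0,b) \in \scrN_{q,1}$. Counting dimensions gives $\ell p + m + r = n$, so $(p,m,r) \in \TPL_{q,n}$, and in a basis adapted to the decomposition $(A,B)$ becomes an element of $\scrD_{q,\ell}^{\oplus p} \oplus \scrD_{q,1}^{\oplus m} \oplus \scrN_{q,1}^{\oplus r} \subseteq \scrZ_{p,m,r}$. Since $\scrZ_{p,m,r}$ equals the irreducible component $\scrK_{q,((m,0,\ldots,0,p),(r,0,\ldots,0))}$ of the $G$-stable variety $\scrK_{q,n}$, and $G = \GL_n$ is connected, $\scrZ_{p,m,r}$ is itself $G$-invariant; hence the original $(A,B)$ also lies in $\scrZ_{p,m,r}$.

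The main obstacle is the classification of simples, but this comes out cleanly from the submodule structure of $\ker A$ (and $\ker B$) combined with $q^\ell = 1$; the rest is just bookkeeping about the indexing triple $(p,m,r)$.
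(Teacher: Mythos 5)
Your proposal takes a genuinely different and more conceptual route than the paper's. The paper proves this lemma by a lengthy case analysis on the Jordan structure of $A$ (nonzero nilpotent, invertible with several sub-cases depending on $B$, mixed), using explicit degeneration arguments --- such as conjugating by $\diag(tI_k,I_{n-k})$ and letting $t\to 0$ --- to rule out non-closed orbits. You instead go through the characterization of closed orbits as semisimple $\bbA_q^2$-modules, classify the finite-dimensional simples directly, and decompose. The classification (a simple is either one-dimensional with one of $A,B$ vanishing, or $\ell$-dimensional with both invertible) is a clean reduction which the paper does not make explicit, and it lands in the same components $\scrZ_{p,m,r}$. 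Your route buys conceptual clarity; the paper's buys explicit matrix information reused elsewhere.

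There is, however, a gap in the classification of the $\ell$-dimensional simples. You assert that $B^\ell v$, ``being an $A$-eigenvector of eigenvalue $a$, is a (nonzero) scalar multiple of $v$.'' That inference does not follow from what has been established: a priori the $a$-eigenspace of $A$ on $M$ may have dimension greater than one, and the claim that it is one-dimensional is equivalent to $\dim M=\ell$, which is precisely what you are in the middle of proving --- so the step is circular as written. The repair is short. Since $q^\ell=1$, the elements $x^\ell$ and $y^\ell$ are central in $\bbA_q^2$, so $B^\ell$ is an $\bbA_q^2$-module endomorphism of $M$; by Schur's lemma it acts as a scalar $\beta$, and invertibility of $B$ gives $\beta\neq 0$, whence $B^\ell v=\beta v$ for \emph{any} $v$. (Equivalently, since $B^\ell$ commutes with $A$ it preserves the $a$-eigenspace, and one can choose $v$ there to be a $B^\ell$-eigenvector.) With that repair the span of $v,Bv,\dots,B^{\ell-1}v$ is indeed a submodule, and the rest of your argument --- counting simples of each type, reordering to land in $\scrD_{q,\ell}^{\oplus p}\oplus\scrD_{q,1}^{\oplus m}\oplus\scrN_{q,1}^{\oplus r}$, and using $G$-invariance of the irreducible component $\scrZ_{p,m,r}$ --- is sound.
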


\begin{proof}
The proof is divided into the following three cases.

{ Case (i):} $A$ is nonzero nilpotent.
Assume that
$$A=\diag(\mathbb{J}_{1,{r_1}}(0), \mathbb{J}_{2,{r_2}}(0),\ldots,
\mathbb{J}_{{k},r_{k}}(0))$$
so that $r_k\neq0$.

For any $B\in \scrR_{q,A}$, it is a $k\times k$ block matrix $(B_{ij})$,
where
$$B_{ij}=\QLB(i,j,(B^{(1)}_{ij},B^{(2)}_{ij},\ldots,B^{(\min\{i,j\})}_{ij}))$$
is a $q$-layered block matrix by Proposition \ref{QLBmatrix}.
Similar to the proof of Proposition \ref{prop:Jordan block 0}, there exists a permutation matrix $P$ such that
$P^{-1}BP$ is a $\frac{1}{2}k(k+1)\times \frac{1}{2}k(k+1)$ block upper-diagonal matrix
with diagonals being a rearrangement of the diagonal blocks of $B$, i.e.,
the diagonal entries in the resulting block upper-triangular matrix are
\begin{eqnarray}
\diag\big( B^{(1)}_{k,k};B^{(1)}_{k-1,k-1},qB^{(1)}_{k,k};\ldots;B^{(1)}_{1,1},qB^{(1)}_{2,2},\ldots,q^{k-1}B^{(1)}_{k,k} \big),
\end{eqnarray}
and $P^{-1}AP$ is a strict upper matrix with diagonal entries zero. In fact, if considering $P^{-1}AP$ in the same block form of $P^{-1}BP$, then its first row is $(0;0,I_{r_k};\dots; \overbrace{0,\dots, 0}^{k})$, and its first column is zero. Let $g=\diag(tI_k,I_{n-k})$ with $t$ nonzero. Considering $g (P^{-1}AP)g^{-1}$ and  $g (P^{-1}BP)g^{-1}$, it is easy to see that the ${G}$-orbit of $(A,B)$ is not closed.

{ Case (ii):} $A$ is invertible. Then we can assume that
\begin{align}
A=\diag\{ A_1,\dots,A_k\},
\end{align}
such that $\lambda_i$ and $\lambda_j$ are not $q$-equivalent for any eigenvalues $\lambda_i$ of $A_i$ and $\lambda_j$ of $A_j$ with $i\neq j$.
Then $B=\diag\{B_1,\dots,B_k\}$ such that $A_iB_i=qB_iA_i$ for any $1\leq i\leq k$.
Since the orbit of $(A,B)$ is closed, we obtain that the orbit of $(A_i,B_i)$ is closed for each $1\leq i\leq k$.

Without losing of generality, we assume that the set of the eigenvalues of $A$ contained in $\{a,aq,\dots, aq^{\ell-1}\}$. As in Proposition \ref{prop:block a,aq^{-(l-1)}},
let $A=\diag(J_{\nu_1}(a), J_{\nu_2}(aq^{-1}),\ldots, J_{\nu_{\ell}}(aq^{-(\ell-1)}))$, where $a\in\bbC^*$.
Here $\nu_i=(\nu_{i1},\nu_{i2},\cdots)$ is a partition for any $1\leq i\leq \ell$. Then
\begin{align*}
B=
\begin{bmatrix}
0    & B_1   &          &        &        \\
0    &  0     & B_2     &        &        \\
\vdots    &        & \ddots   & \ddots &         \\
0    &  0     &  \ldots  & 0      & B_{\ell-1} \\
B_\ell &  0     & \ldots   & 0      & 0        \\
\end{bmatrix},
\end{align*}
with $B_i$ a $|\nu_i|\times |\nu_{i+1}|$ matrix for $1\leq i\leq \ell$. Here $\nu_{\ell+1}=\nu_1$. It follows that
$$\rank (B)=\sum_{i=1}^\ell\rank (B_i),$$
and $B$ is invertible only if $|\nu_i|=|\nu_j|$ for any $1\leq i,j\leq \ell$.

The proof is divided into the following cases.

{ Case (ii-1):} $B$ is zero. Then $(A,B)\in \scrK_{0,n,0}$.

{ Case (ii-2):} $B$ is a nonzero nilpotent. By  Case (i) and \eqref{def: theta}, the orbit of $(A,B)$ is not closed.

{ Case (ii-3):} $B$ is invertible. Then $|\nu_i|=|\nu_j|$ for any $1\leq i,j\leq \ell$ by the above. It yields that $(A,B)\in\scrZ_{n/|\nu_1|, 0,0}$ by Proposition \ref{prop:block a,aq^{-(l-1)}}.

{ Case (ii-4):} $B$ is neither invertible nor nilpotent. Then there exists an invertible matrix $U$ such that
$UBU^{-1}=\begin{bmatrix} D_1& 0\\0&D_2 \end{bmatrix}$ with $D_1$ invertible and $D_2$ nilpotent. Then $UAU^{-1}=\begin{bmatrix} C_1& 0\\0&C_2 \end{bmatrix}$ by Lemma \ref{Jordan q-layer}. Note that $C_1,C_2$ are invertible by the assumption. So $(A,B)\cong (C_1,D_1)\oplus(C_2,D_2)$ as $\bbA_q^2$-modules. Since the orbit of $(A,B)$ is closed, the orbit of $(C_i,D_i)$ is closed for $i=1,2$.
Then $D_2=0$ by Case (i). It follows that $(C_1,D_1)\in \scrZ_{p_1,m_1,r_1}$ by Case (ii-3),  and $(C_2,D_2)\in\scrZ_{p_2,m_2,r_2}$ by Case (ii-1) for some $(p_1,m_1,r_1)$ and $(p_2,m_2,r_2)$.
Then $(A,B)\in \scrK_{p_1+p_2,m_1+m_2,r_1+r_2}$.

{ Case (iii):} $A$ is neither invertible nor nilpotent. The proof is similar to Case (ii-4), and hence omitted. The lemma is proved.
\end{proof}

For $(p,m,r) \in \TPL_{q,n}$, we denote by
\[
\pi_{p,m,r}: \scrZ_{p,m,r}\longrightarrow \scrZ_{p,m,r}//{G}
\]
the natural map induced by $\bbC[\scrZ_{p,m,r}]^{G}\subseteq\bbC[\scrZ_{p,m,r}].$
Theorem \ref{fundamental theorem of GIT}, Lemma \ref{lem: closed 1} and Theorem \ref{thm: irreducible comp} imply that $\scrZ_{p,m,r}//{G}$ is an irreducible subvariety of $\scrK_{q,n}//{G}$ for each $(p,m,r) \in \TPL_{q,n}$, and
\begin{align}
\label{eqn: cover}
\scrK_{q,n}//{G}= \bigcup_{(p,m,r)\in\TPL_{q,n}} \scrZ_{p,m,r}//{G}.
\end{align}

Define  $\widetilde{\calU}_{p,m,r}:=\pi_{p,m,r}(\calU_{p,m,r})$. Then  $\widetilde{\calU}_{p,m,r}$ is a nonempty open subset in $\scrZ_{p,m,r}//{G}$.

\begin{prop}
 \label{prop:pure dimension}
If $\ell<\infty$, then 
every irreducible component  $\scrZ_{p,m,r}//{G}$ has dimension $n+(2-\ell)p$, for any $(p,m,r) \in \TPL_{q,n}$.
\end{prop}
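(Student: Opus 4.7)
The plan is to combine the known dimension of $\scrZ_{p,m,r}$ as an irreducible component of $\scrK_{q,n}$ with an orbit-stabilizer computation on the polystable locus $\calU_{p,m,r}$. First I would observe that $\scrZ_{p,m,r}$ coincides with $\scrK_{q,(\bfm,\bfr)}$ for $\bfm=(m,0,\dots,0,p)$ and $\bfr=(r,0,\dots,0)$; in particular $m_\ell=p$, so Theorem~\ref{prop: dimension of Cl} yields $\dim \scrZ_{p,m,r}=n^2+p$.

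Next, by Lemma~\ref{lem:U polystable} every $G$-orbit through $\calU_{p,m,r}$ is closed, so on $\calU_{p,m,r}$ the quotient map $\pi_{p,m,r}$ is a geometric quotient whose fibers are exactly the $G$-orbits. Since $\widetilde{\calU}_{p,m,r}=\pi_{p,m,r}(\calU_{p,m,r})$ is nonempty open in the irreducible variety $\scrZ_{p,m,r}//G$, it is dense there. Hence, for a generic $(A,B)\in\calU_{p,m,r}$, the orbit-stabilizer dimension identity gives
$$\dim \scrZ_{p,m,r}//G \;=\; \dim \scrZ_{p,m,r} - \dim G + \dim \mathrm{Stab}_G(A,B) \;=\; p + \dim \mathrm{Stab}_G(A,B).$$

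The key step is therefore the generic stabilizer computation, which I would carry out using the normal form \eqref{eq:AST1}--\eqref{eq:AST2}. Since the nonzero eigenvalues of $A$ are pairwise non-$q$-equivalent and the $m+r$ scalar pieces are separated by distinct nonzero eigenvalues of either $A$ or $B$, any $g\in\mathrm{Stab}_G(A,B)$ must be block-diagonal with respect to the given decomposition of $(A,B)$. Each of the $m+r$ one-dimensional blocks obviously contributes $\bbC^*$. For each of the $p$ blocks of size $\ell$, commutation with $\diag(a_i,a_iq^{-1},\dots,a_iq^{-(\ell-1)})$ (whose entries are distinct since $q$ has order $\ell$) forces $g$ to be diagonal on that block, and then commutation with the cyclic matrix $C_i$ from \eqref{eqn: form B} produces the relations $g_1 b_{i,1}=b_{i,1}g_2,\ \dots,\ g_{\ell-1}b_{i,\ell-1}=b_{i,\ell-1}g_\ell,\ g_\ell b_{i,\ell}=b_{i,\ell}g_1$, which together with the nonvanishing of every $b_{i,j}$ force $g_1=g_2=\dots=g_\ell$, again contributing $\bbC^*$. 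Thus $\dim\mathrm{Stab}_G(A,B)=p+m+r$, and using $n=\ell p+m+r$ we obtain
$$\dim \scrZ_{p,m,r}//G \;=\; p + (p+m+r) \;=\; 2p+m+r \;=\; n+(2-\ell)p,$$
as claimed.

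The main obstacle is the $\ell\times\ell$ block stabilizer computation: it is precisely the closure of the cyclic chain (the lower-left entry $b_{i,\ell}$ in $C_i$ from \eqref{eqn: form B}) that collapses the $\ell$-dimensional torus centralizer of the diagonal $A$-block down to the scalars, and this is where the period $\ell$ of $q$ enters to produce the $(2-\ell)p$ correction.
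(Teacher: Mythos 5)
Your proof is correct and follows essentially the same route as the paper: both use the polystability of $\calU_{p,m,r}$ (Lemma~\ref{lem:U polystable}) together with Theorem~\ref{fundamental theorem of GIT} to identify the generic fibers of $\pi_{p,m,r}$ with closed $G$-orbits, and then compute $\dim \scrZ_{p,m,r}//G = \dim\scrZ_{p,m,r} - \dim G + \dim G_{(A,B)}$ using Theorem~\ref{prop: dimension of Cl}. The only deviation is that you spell out the stabilizer computation $\dim G_{(A,B)} = p+m+r = n-(\ell-1)p$ directly from the normal forms \eqref{eq:AST1}--\eqref{eq:AST2}, where the paper instead cites \cite[Proposition~3.4]{CW18}.
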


\begin{proof}
First, we assume $\ell\neq1$, i.e., $q\neq+1$.
By \eqref{eqn: cover}, we only need to prove that $\dim_\bbC \scrZ_{p,m,r}//{G}=n$ for any $(p,m,r) \in \TPL_{q,n}$.

Note that
\[
\pi_{p,m,r}:\calU_{p,m,r}\longrightarrow \widetilde{\calU}_{p,m,r}
\]
is a surjective regular map between irreducible varieties. Since the ${G}$-orbit of any point $(A,B)\in \calU_{p,m,r}$ is closed by Lemma~\ref{lem:U polystable}, 
Theorem \ref{fundamental theorem of GIT}(1) shows that the fiber of $\pi_{p,m,r}$ over $\pi_{p,m,r} (A,B) \in \widetilde{\calU}_{p,m,r}$ is isomorphic to the closed ${G}$-orbit of $(A,B)$ in $\calU_{p,m,r}$.

Let $(A,B)\in \calU_{p,m,r}$.
Similar to the proof of \cite[Proposition~3.4]{CW18}, the stabilizer subgroup ${G}_{(A,B)}$ has dimension $n-(\ell-1)p$.
So
\[
\dim {G}\cdot (A,B)=\dim {G}-\dim {G}_{(A,B)}=n^2-n+(\ell-1)p.
\]
Hence, by Theorem \ref{prop: dimension of Cl}, we have
\[
\dim \widetilde{\calU}_{p,m,r}=\dim \calU_{p,m,r}-\dim {G}\cdot (A,B)=(n^2+p)-(n^2-n+(\ell-1)p)=n+(2-\ell)p.
\]
Since $\widetilde{\calU}_{p,m,r}$ is dense open in $\scrZ_{p,m,r}//{G}$, we have $\dim (\scrZ_{p,m,r}//{G})=n+(2-\ell)p$.

If $\ell=1$, i.e., $q=1$, we have $\scrK_{q,n}$ is irreducible, and then so is $\scrK_{q,n}//G$. Similar to the above, it is of dimension $2n=n+(2-1)p$ by noting $p=n$ in this case.
\end{proof}

Note that if $q=-1$, then $\scrK_{q,n}//G$ is of pure dimension $n$, see \cite{CW18}.



\begin{prop}
\label{prop: irr 2}
For each $(p,m,r)\in\TPL_{q,n}$, $\scrZ_{p,m,r}//{G}$ is an irreducible component of $\scrK_{q,n}//{G}$ and  $\scrK_{q,n}//{G}= \bigcup_{(p,m,r)\in\TPL_{q,n}} \scrZ_{p,m,r}//{G}$.
\end{prop}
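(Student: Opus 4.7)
The plan is to combine the covering in \eqref{eqn: cover} with Proposition \ref{prop:pure dimension} and a GIT-based comparison argument showing that distinct triples $(p,m,r)\in\TPL_{q,n}$ give distinct maximal irreducible pieces of $\scrK_{q,n}//G$. The covering $\scrK_{q,n}//G =\bigcup_{(p,m,r)\in\TPL_{q,n}} \scrZ_{p,m,r}//G$ is already in hand, and each $\scrZ_{p,m,r}//G$ is irreducible because $\scrZ_{p,m,r}$ is irreducible (being $\scrK_{q,(\bfm,\bfr)}$ for $\bfm=(m,0,\dots,0,p)$, $\bfr=(r,0,\dots,0)$) and the quotient map $\pi_{p,m,r}$ is surjective. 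Thus it only remains to show that no $\scrZ_{p,m,r}//G$ is contained in a different $\scrZ_{p',m',r'}//G$. The case $\ell=1$ is trivial because then $\TPL_{q,n}=\{(n,0,0)\}$, so we may assume $1\neq \ell<\infty$.

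Suppose $\scrZ_{p,m,r}//G \subseteq \scrZ_{p',m',r'}//G$. Because $\calU_{p,m,r}$ is a nonempty open subset of $\scrZ_{p,m,r}$ and $\pi_{p,m,r}$ is surjective, the set $\widetilde{\calU}_{p,m,r}=\pi_{p,m,r}(\calU_{p,m,r})$ is a nonempty (hence dense) open subset of $\scrZ_{p,m,r}//G$, so $\widetilde{\calU}_{p,m,r}\subseteq \scrZ_{p',m',r'}//G$. Pick any $(A,B)\in \calU_{p,m,r}$; by Lemma \ref{lem:U polystable} the orbit $G\cdot(A,B)$ is closed. By Theorem \ref{fundamental theorem of GIT}(2) this orbit is the unique closed orbit inside the fiber $\pi^{-1}(\pi(A,B))$. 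Since $\pi(A,B)\in\scrZ_{p',m',r'}//G$, there exists $(A',B')\in \scrZ_{p',m',r'}$ with $\pi(A',B')=\pi(A,B)$; then $\overline{G\cdot(A',B')}$ must contain the unique closed orbit $G\cdot(A,B)$, and because $\scrZ_{p',m',r'}$ is closed and $G$-invariant, this forces $(A,B)\in\scrZ_{p',m',r'}$.

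We conclude that $\calU_{p,m,r}\subseteq \scrZ_{p',m',r'}$, and passing to Zariski closures gives $\scrZ_{p,m,r}\subseteq \scrZ_{p',m',r'}$. By Theorem \ref{thm: irreducible comp}, however, both sides are irreducible components of $\scrK_{q,n}$, indexed by the distinct elements $((m,0,\dots,0,p),(r,0,\dots,0))$ and $((m',0,\dots,0,p'),(r',0,\dots,0))$ of $\ML_{q,n}$ whenever $(p,m,r)\neq(p',m',r')$. Hence $(p,m,r)=(p',m',r')$, which finishes the proof.

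The main obstacle is the step that pulls an inclusion in the quotient back to an inclusion upstairs: one must carefully use the closedness of orbits inside $\calU_{p,m,r}$ (Lemma \ref{lem:U polystable}) together with the uniqueness of closed orbits in each fiber to deduce $(A,B)\in\scrZ_{p',m',r'}$ from $\pi(A,B)\in\scrZ_{p',m',r'}//G$. Once that is established, the remainder of the argument just assembles the already-proven covering and the distinctness of irreducible components of $\scrK_{q,n}$.
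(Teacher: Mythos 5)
Your proof is correct and takes essentially the same approach as the paper: use Lemma~\ref{lem:U polystable} (orbits in $\calU_{p,m,r}$ are closed) together with the GIT separation of closed orbits to lift an inclusion of quotients back to an inclusion $\scrZ_{p,m,r}\subseteq\scrZ_{p',m',r'}$ upstairs, then contradict Theorem~\ref{thm: irreducible comp}. You merely spell out, via uniqueness of the closed orbit in each fiber, the step that the paper's proof states rather tersely (``So we have $\scrZ_{p,m,r}//G\nsubseteqq\scrZ_{p',m',r'}//G$'').
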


\begin{proof}
Recall $\calU_{p,m,r}$ defined in \eqref{def:Upmr}. It follows from Lemma \ref{lem:U polystable} that the orbit of any point in $\calU_{p,m,r}$ is closed. Claim:  $\calU_{p,m,r}\nsubseteqq \calU_{p',m',r'}$ for any $(p,m,r)\neq(p',m',r')\in\TPL_{q,n}$. Otherwise, we have $\scrZ_{p,m,r}\subseteq \scrZ_{p',m',r'}$ by taking Zariski closures for both sides, gives a contradiction to Theorem \ref{thm: irreducible comp}.
So we have $\scrZ_{p,m,r}//G\nsubseteqq\scrZ_{p',m',r'}//G$ for any $(p,m,r)\neq(p',m',r')\in\TPL_{q,n}$. The desired result follows from \eqref{eqn: cover}.
\end{proof}

\section{$q$ is not a root of unity}
\label{section:infinite}

In this section, we consider the case $\ell=\infty$, i.e., $q$ is not a root of unity. The results on the irreducible components and their dimensions of $\scrK_{q,n}$ and $\scrK_{q,n}//G$ in this case is completely same to the corresponding  result on the case when $n<\ell<\infty$. We list these results for convenience in the following.

\subsection{}

As in \S\ref{subsec: def DN}, one can define $\scrD_{q,i}$ and $\scrN_{q,i}$ for any $i\geq1$.
For any vector $\bfv=(v_1,v_2,\ldots)\in \bbN^{\infty}$ (with only finitely many components not zero), we define
\begin{align*}
\|\bfv\|:=&v_1+2v_2+\cdots,\,\,\,|\bfv|:=v_1+v_2+\cdots.
\end{align*}
Similar to \eqref{ML} and Definition \ref{def:Kqmr}, we introduce the indexing set
\begin{align*}
{\ML_{q,n}}:=
\{(\bfm,\bfr) \mid\bfm=(m_1,m_2,\ldots)\in\bbN^{\infty},
\bfr=(r_1,r_2\ldots)\in\bbN^{\infty},\,\, \|\bfm\|+\|\bfr\|=n\},
\end{align*}
and define
\begin{align*}
\scrK_{q,(\bfm,\bfr)}:= \ov{\scrD_{q,1}^{\oplus m_1}\oplus \scrD_{q,2}^{\oplus m_{2}}\oplus\cdots
\oplus\scrN_{q,1}^{\oplus r_1}\oplus\scrN_{q,2}^{\oplus r_{2}}\oplus \cdots}
\end{align*}
for any $(\bfm,\bfr)\in \ML_{q,n}$.

Using the same proofs of Lemma \ref{lem:not include} and Proposition \ref{prop: irreducible components for smaller n}, we obtain the following two lemmas.
\begin{lem}
If $\ell=\infty$, we have the following:
\begin{itemize}
\item[(i)] $\scrN_{q,i} \nsubseteqq   \scrD_{q,i}$, and $\scrD_{q,i} \nsubseteqq \scrN_{q,i}$ for any $i\geq1$.
\item[(ii)] $\scrD_{q,i} \nsubseteqq \scrK_{q,(\bfm,\bfr)}\subseteq \scrK_{q,i}$ and $\scrN_{q,i} \nsubseteqq \scrK_{q,(\bfm,\bfr)}\subseteq \scrK_{q,i}$ for any $i\geq1$, and any $(\bfm,\bfr)\in\ML_{q,i}$ such that
$ \|\bfm\| +\| \bfr\| =i$ and $\| \bfm\|\cdot\| \bfr\|\neq 0$.
\end{itemize}
\end{lem}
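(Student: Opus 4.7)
The plan is to imitate the proofs of Lemma \ref{lem:not include} and the inductive step of Proposition \ref{prop: irreducible components for smaller n}, noting that the only use of the hypothesis $\ell<\infty$ in those arguments was to allow a $\scrD_{q,\ell}$ summand whose $B$-component is invertible. Since no such summand exists when $\ell=\infty$, every direct summand of the form $\scrD_{q,j}$ contributes a nilpotent $B$, and the required non-containments all reduce to uniform rank inequalities together with the density statements from Section~\ref{sec: q-commuting var}.

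For part (i), I would first dispose of $i=1$ by inspection, since $\scrD_{q,1}$ is the line $\{(a,0)\mid a\in\bbC\}$ and $\scrN_{q,1}$ is the line $\{(0,b)\mid b\in\bbC\}$. For $i\geq 2$, the dense open subset $U_{q,i}\subseteq\scrD_{q,i}$ consists of pairs whose $A$-component is invertible (its eigenvalues $a, aq^{-1},\ldots, aq^{-(i-1)}$ are nonzero, and by the $\ell=\infty$ assumption automatically pairwise distinct), whereas every $(A,B)\in\scrN_{q,i}$ has $A$ nilpotent, since $\scrN_{q,i}$ is the Zariski closure of $\GL_i$-orbits of $(J_i(0),B)$ and nilpotency is a closed condition. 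Hence $U_{q,i}\cap\scrN_{q,i}=\emptyset$, giving $\scrD_{q,i}\nsubseteqq\scrN_{q,i}$. Dually, the dense open subset $V_{q,i}\subseteq\scrN_{q,i}$ consists of pairs $(gJ_i(0)g^{-1}, gBg^{-1})$ with $B=\L^q(i,i,(b_1,\ldots,b_i))$ upper triangular of nonzero diagonal $b_1, qb_1,\ldots, q^{i-1}b_1$, so $B$ is invertible; but on $U_{q,i}\subseteq\scrD_{q,i}$ the matrix $B$ is strictly upper triangular, and the closed condition $\det B=0$ propagates to all of $\scrD_{q,i}$. Therefore $V_{q,i}\cap\scrD_{q,i}=\emptyset$ and $\scrN_{q,i}\nsubseteqq\scrD_{q,i}$.

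For part (ii), I would first observe that Lemma \ref{lem:not include0} extends verbatim to $\ell=\infty$ with the $m_\ell$ term simply dropped: its proof only requires that each $\scrD_{q,j}$ summand have invertible $A$ but nilpotent $B$ and each $\scrN_{q,j}$ summand the reverse, which is automatic here since there is no $\scrD_{q,\ell}$ summand. Thus every $(A,B)\in\scrK_{q,(\bfm,\bfr)}\subseteq\scrK_{q,i}$ satisfies $\rank(A)\leq i-|\bfr|$ and $\rank(B)\leq i-|\bfm|$. Under the hypothesis $\|\bfm\|\cdot\|\bfr\|\neq 0$ we have $|\bfm|\geq 1$ and $|\bfr|\geq 1$; since every point of $U_{q,i}$ has $\rank(A)=i>i-|\bfr|$ one obtains $\scrD_{q,i}\nsubseteqq\scrK_{q,(\bfm,\bfr)}$, while every point of $V_{q,i}$ has $\rank(B)=i>i-|\bfm|$, yielding $\scrN_{q,i}\nsubseteqq\scrK_{q,(\bfm,\bfr)}$. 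No step presents a real obstacle; the only substantive remark is the extension of Lemma \ref{lem:not include0}, and that extension is genuinely automatic precisely because the $\ell=\infty$ setting removes the only summand that complicated the bookkeeping in the finite case.
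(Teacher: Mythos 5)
Your proof is correct, and it follows essentially the same route the paper does (the paper merely says ``using the same proofs of Lemma~\ref{lem:not include} and Proposition~\ref{prop: irreducible components for smaller n}''), with the right observation that the only change in the $\ell=\infty$ setting is that there is no $\scrD_{q,\ell}$ summand, so the $m_\ell$ correction in Lemma~\ref{lem:not include0} disappears and $\rank(B)\le i-|\bfm|$ holds on all of $\scrK_{q,(\bfm,\bfr)}$. One remark worth making: in part~(i) the paper's Lemma~\ref{lem:not include} only observes $\calD_{q,i}\cap\scrN_{q,i}=\emptyset$ (via invertibility versus nilpotency of $A$), which immediately gives $\scrD_{q,i}\nsubseteqq\scrN_{q,i}$ but does not by itself give $\scrN_{q,i}\nsubseteqq\scrD_{q,i}$; your symmetric argument via $\det B$ --- that $\det B\equiv 0$ on $\scrD_{q,i}=\ov{U_{q,i}}$ because $B$ is strictly upper-triangular on the dense open set $U_{q,i}$, while $B$ is invertible on $V_{q,i}$ --- actually makes that half of the claim cleaner and more explicit than what the paper offers. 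The parenthetical about the eigenvalues of $A$ being ``automatically pairwise distinct'' is not needed for the argument (only invertibility is used), but it is harmless.
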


\begin{lem}
If $\ell=\infty$, then
$\scrD_{q,i}$, $\scrN_{q,i}$ are irreducible components of $\scrK_{q,i}$ for any $i\geq1$.
\end{lem}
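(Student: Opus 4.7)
The plan is to mirror the induction argument of Theorem \ref{prop: irreducible components for smaller n}, exploiting several simplifications that arise when $\ell = \infty$. First I would verify that $\scrD_{q,i}$ and $\scrN_{q,i}$ are irreducible for every $i \geq 1$: the morphisms $\phi_{q,i}$ and $\psi_{q,i}$ from \eqref{eqn:phii} and \eqref{eqn:psii} remain well-defined when $\ell = \infty$, and the argument of Proposition \ref{prop:irreducibility of <l} carries over unchanged.

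Next I would proceed by induction on $i$. The base case $i=1$ is immediate: $\scrK_{q,1} = \scrD_{q,1} \cup \scrN_{q,1}$, and by the previous lemma neither variety is contained in the other, so both are irreducible components. For the inductive step, assuming the statement holds for every $k < i$, Lemma \ref{lem:extension0} and Lemma \ref{lem:C-BS} together imply that $\scrK_{q,(\bfm,\bfr)}$ is an irreducible component of $\scrK_{q,i}$ for every $(\bfm,\bfr) \in \ML_{q,i}$ whose support lies in indices strictly less than $i$.

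It then remains to establish a covering of the form $\scrK_{q,i} = \scrD_{q,i} \cup \scrN_{q,i} \cup \bigcup \scrK_{q,(\bfm,\bfr)}$, where the union ranges over $(\bfm,\bfr) \in \ML_{q,i}$ supported on indices strictly less than $i$. I would run through the same three-case analysis as in Theorem \ref{prop: irreducible components for smaller n}: (i) if $A$ splits as $\diag(A_1, A_2)$ with non-$q$-equivalent eigenvalue sets, induction on each summand suffices; (ii) if $A$ is nilpotent, Lemma \ref{lem:nilpotent1} applies verbatim, since its hypothesis $k < \ell$ is automatic when $\ell = \infty$; (iii) if the eigenvalues of $A$ lie in a single $q$-orbit $\{a, aq^{-1}, \ldots, aq^{-(t-1)}\}$ with $a \in \bbC^*$, then $A$ is invertible, and the argument underlying Lemma \ref{lem:invereigen} forces $B$ to be nilpotent: any nonzero eigenvalue $b$ of $B$ would produce an infinite sequence $b, bq^{-1}, bq^{-2}, \ldots$ of distinct eigenvalues, which is impossible for an $i \times i$ matrix. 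One then invokes Lemma \ref{lem:switch} and applies Lemma \ref{lem:nilpotent1} to $(B, A) \in \scrK_{q^{-1}, i}$ to conclude that $(A,B) \in \scrK_{q,(\bfm, \vec{0})}$ for a suitable $\bfm$.

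I expect the delicate point to be case (iii), where one must separate the subcase in which $A$ is conjugate to $\diag(a, aq^{-1}, \ldots, aq^{-(i-1)})$ (so that $(A,B) \in \scrD_{q,i}$) from the remaining situations (which produce strictly smaller pieces). This is analogous to the dichotomy between Subcases (a) and (b) in the finite-$\ell$ proof, except that the latter vanishes because $t \leq i < \infty = \ell$ always. Combined with the non-containment assertions of the previous lemma, no other irreducible component of $\scrK_{q,i}$ can contain $\scrD_{q,i}$ or $\scrN_{q,i}$, which finishes the argument.
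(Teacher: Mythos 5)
Your proposal is correct and takes essentially the same approach as the paper, whose own proof of this lemma is the one-line assertion that the argument of Theorem \ref{prop: irreducible components for smaller n} (together with the non-containment lemma) carries over to $\ell=\infty$. You fill in exactly those details, correctly observing that $\ell=\infty$ makes the hypothesis of Lemma \ref{lem:nilpotent1} automatic and, via the infinite-chain-of-eigenvalues argument behind Lemma \ref{lem:invereigen}, forces $B$ nilpotent in Case (iii) so that only Subcase (a) survives.
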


In fact, using the same proof of \eqref{eqn:<l} in Proposition \ref{prop: irreducible components for smaller n}, we can obtain
\begin{align*}
\scrK_{q,k}= \scrD_{q,k}\cup \scrN_{q,k}\cup \bigcup_{\tiny\begin{array}{cc}(\bfm,\bfr)\in \ML_{q,k}\\ \|\bfm\|\leq k-1, \|\bfr\|\leq k -1\end{array}} \scrK_{q, (\bfm,\bfr)}
\end{align*}
for any $k\geq1$. So we have the following proposition.

\begin{prop}
\label{prop: irreducible l infty}
If $q$ is not a root of unity, then the variety $\scrK_{q,n}$ has one irreducible component $\scrK_{q,(\bfm,\bfr)}$ for each $(\bfm,\bfr)\in\ML_{q,n}$ such that
$\scrK_{q,n} =\bigcup_{(\bfm,\bfr)\in \ML_{q,n}} \scrK_{q,(\bfm,\bfr)}$.
\end{prop}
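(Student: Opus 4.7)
The plan is to mimic the proof for $\ell < \infty$ summarized in Theorem \ref{thm: irreducible comp}, with simplifications stemming from the absence of the ``wraparound'' generator $\scrD_{q,\ell}$. When $\ell = \infty$ the index set $\ML_{q,n}$ consists of pairs of finitely-supported sequences $(\bfm, \bfr) \in \bbN^\infty \times \bbN^\infty$ with $\|\bfm\| + \|\bfr\| = n$, and \eqref{eqn: def of irr} becomes $\scrK_{q,(\bfm,\bfr)} = \overline{\bigoplus_{i \geq 1} \scrD_{q,i}^{\oplus m_i} \oplus \bigoplus_{j \geq 1} \scrN_{q,j}^{\oplus r_j}}$, with only finitely many nonzero summands by the support condition. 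The irreducibility of each $\scrD_{q,i}$ and $\scrN_{q,j}$ is immediate from Proposition \ref{prop:irreducibility of <l} via the morphisms $\phi_{q,i}, \psi_{q,j}$ (the only use of $\ell$ there was to guarantee $a, aq^{-1}, \ldots, aq^{-(i-1)}$ are distinct, which is automatic now), and the homological vanishing of Lemma \ref{lem:extension0} continues to hold for every pair drawn from $\{\scrD_{q,i}, \scrN_{q,j} : i, j \geq 1\}$ for the same reason.

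Granting these two facts, Lemma \ref{lem:C-BS} immediately gives that $\scrK_{q,(\bfm,\bfr)}$ is irreducible and is an irreducible component of $\scrK_{q,n}$ once we know each generator is a component; the different $(\bfm, \bfr)$ produce distinct components by the rank and nilpotence obstructions of Lemmas \ref{lem:not include0}--\ref{lem:not include} (which go through verbatim, ignoring the statements that reference $\scrD_{q,\ell}$). To finish I must establish the covering $\scrK_{q,n} = \bigcup_{(\bfm,\bfr)} \scrK_{q,(\bfm,\bfr)}$. Proceeding as in Theorem \ref{prop: irreducible components for smaller n}, given $(A,B) \in \scrK_{q,n}$ I would decompose $A$ into generalized-eigenspace blocks grouped by $q$-equivalence class of eigenvalues and reduce to two situations. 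If on a block $A$ is nilpotent, I would invoke an $\ell = \infty$ analogue of Proposition \ref{prop:Jordan block 0}; if $A$ has a nonzero $q$-chain of eigenvalues, then Lemma \ref{lem:invereigen} forces $B$ to be nilpotent (otherwise $B$ would have infinitely many distinct eigenvalues $b, bq^{-1}, bq^{-2}, \ldots$), whereupon Lemma \ref{lem:switch} reduces the block to the preceding nilpotent case. Remark \ref{remark oplus} then assembles the blocks into a single $\scrK_{q,(\bfm,\bfr)}$.

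The main technical obstacle is the infinite-$\ell$ analogue of Proposition \ref{prop:Jordan block 0}: for $A = \diag(\mathbb{J}_{s_1, n_1}(0), \ldots, \mathbb{J}_{s_k, n_k}(0))$ I must construct a dense open $\calU \subseteq \scrR_{q,A}$ on which each $B$ has Jordan normal form consisting of precisely the blocks $J_{s_i}$ at various distinct, pairwise $q$-inequivalent nonzero eigenvalues, with no ``wraparound'' Jordan blocks of strictly smaller sizes, so that $(A,B) \in \scrK_{q,(\vec{0}, \bfr)}$ for the $\bfr$ recording the multiplicities of each block size. The rank computations in the proof of Proposition \ref{prop:KJsn} and the dense-open argument there transfer once the finite-$\ell$ decomposition $s = k\ell + t$ is replaced by the trivial $s = 0 \cdot \ell + s$; everything else in Section \ref{sec: cover} specializes without incident.
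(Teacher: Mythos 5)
Your proposal is correct and lands on the same basic structure as the paper, but you route the covering argument through Section~\ref{sec: cover} whereas the paper simply re-runs the induction of Theorem~\ref{prop: irreducible components for smaller n}. The paper observes that since $\ell=\infty$, every $k$ satisfies $k<\ell$, so the inductive claim \eqref{eqn:<l} holds for \emph{all} $k\geq 1$ (Subcase (b) never occurs, and $\scrD_{q,\ell}$ never appears); together with the $\ell=\infty$ analogues of Lemma~\ref{lem:not include} and Theorem~\ref{prop: irreducible components for smaller n} (stated as Lemmas~8.1--8.2) and Proposition~\ref{prop:irreducibility of <l}, the result follows. Your variant explicitly decomposes $A$ by $q$-equivalence classes of eigenvalues and handles nilpotent and invertible blocks separately, invoking Lemma~\ref{lem:invereigen}, Lemma~\ref{lem:switch}, and Remark~\ref{remark oplus}; this is essentially the same logic unrolled.

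Your ``main technical obstacle,'' however, is not an obstacle. When $\ell=\infty$, every nilpotent $A$ has largest Jordan block size strictly smaller than $\ell$, so Lemma~\ref{lem:nilpotent1} applies \emph{verbatim} to any nilpotent block, giving $(A_0,B_0)\in\scrK_{q,(\vec{0},\bfr_0)}$ directly. There is no need to redo the rank computation or the dense-open argument of Proposition~\ref{prop:KJsn}/\ref{prop:Jordan block 0}: the decomposition $s=k\ell+t$ degenerates to $k=0$, $t=s$, which is exactly the case that the proof of Proposition~\ref{prop:KJsn} already defers to Lemma~\ref{lem:nilpotent1}. Recognizing this collapses your last paragraph and brings your proof into alignment with the paper's. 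The rest of your sketch---irreducibility of the generators via $\phi_{q,i},\psi_{q,j}$, Ext-vanishing via a choice of non-$q$-equivalent eigenvalues, Lemma~\ref{lem:C-BS} to assemble components, and the nilpotence of $B$ on any invertible block (since otherwise Lemma~\ref{lem:invereigen} would force infinitely many eigenvalues)---is sound and matches the intended argument.
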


In order to count the cardinality of $\ML_{q,n}$, we need introduce the following notation. Let $p(n)$ the number of the partitions of $n$. In particular, for convenience, $p(0):=1$.

\begin{cor}
\label{cor: no of l infty}
If $q$ is not a root of unity, then the number of irreducible components of $\scrK_{q,n}$ is $\sum_{i+j=n}p(i)p(j)$.
\end{cor}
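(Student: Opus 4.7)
The plan is to reduce the counting to Proposition~\ref{prop: irreducible l infty} and then compute the cardinality of the indexing set $\ML_{q,n}$ by a standard bijection with partitions.

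By Proposition~\ref{prop: irreducible l infty}, when $q$ is not a root of unity the irreducible components of $\scrK_{q,n}$ are exactly the varieties $\scrK_{q,(\bfm,\bfr)}$ as $(\bfm,\bfr)$ ranges over $\ML_{q,n}$, and they are pairwise distinct (none is contained in another). Hence the number of irreducible components equals $|\ML_{q,n}|$, and the problem reduces to a combinatorial identity.

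Next, I would decompose the count according to the value of $\|\bfm\|$. Writing $\|\bfm\|=i$ and $\|\bfr\|=j$, the defining condition $\|\bfm\|+\|\bfr\|=n$ becomes $i+j=n$, and the two vectors $\bfm,\bfr \in \bbN^{\infty}$ (with finitely many nonzero entries) can be chosen independently. Thus
\[
|\ML_{q,n}|=\sum_{i+j=n}\,\bigl|\{\bfm\in\bbN^\infty\mid \|\bfm\|=i\}\bigr|\cdot\bigl|\{\bfr\in\bbN^\infty\mid \|\bfr\|=j\}\bigr|.
\]

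Finally I would invoke the classical bijection between sequences $\bfv=(v_1,v_2,\dots)\in\bbN^\infty$ with $\|\bfv\|=v_1+2v_2+3v_3+\cdots=k$ and partitions of $k$: namely, $\bfv$ corresponds to the partition having $v_s$ parts equal to $s$ for each $s\geq 1$. (Here one uses the convention $p(0)=1$, corresponding to the empty partition and the zero vector.) Therefore each factor in the sum above equals $p(i)$ or $p(j)$ respectively, giving
\[
|\ML_{q,n}|=\sum_{i+j=n}p(i)p(j),
\]
which is the desired formula. There is no serious obstacle here; everything substantive has already been established in Proposition~\ref{prop: irreducible l infty}, and what remains is a one-line combinatorial identification.
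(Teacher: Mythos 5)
Your proof is correct and matches the paper's approach: the paper's own proof is the one-liner ``It follows by counting the cardinality of $\ML_{q,n}$,'' and you have simply spelled out the standard bijection between vectors $\bfv\in\bbN^\infty$ with $\|\bfv\|=k$ and partitions of $k$ (recording the multiplicity of each part size), plus the split $i+j=n$ according to $\|\bfm\|$ and $\|\bfr\|$. The one point worth keeping explicit, as you did, is that the components $\scrK_{q,(\bfm,\bfr)}$ are pairwise distinct, which follows from the containment obstructions established earlier (the analogue of Lemma~\ref{lem:not include}); without that, one would only get an upper bound on the number of components.
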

\begin{proof}
It follows by counting the cardinality of $\ML_{q,n}$.
\end{proof}

Using the same proofs of Lemma \ref{prop: dimension of generators} and Theorem \ref{prop: dimension of Cl}, we can prove the following result.
\begin{lem}
Let $\ell=\infty$.
We have
\begin{itemize}
\item[(i)] $\dim \scrD_{q,i}=i^2=\scrN_{q,i}$ for each $i\geq1$;
\item[(ii)] $\scrK_{q,n}$ is of pure dimension $n^2$.
\end{itemize}
\end{lem}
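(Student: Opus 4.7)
The plan is to adapt the two dimension arguments already worked out for the range $1 \leq i \leq \ell-1$ in Lemma~\ref{prop: dimension of generators} and Theorem~\ref{prop: dimension of Cl} to the case $\ell=\infty$. Since $q$ has infinite order, no $q$-chain ever closes into a cycle, so there is no analogue of the ``top'' component $\scrD_{q,\ell}$ with its extra dimension; every $\scrD_{q,i}$ behaves like a ``subcritical'' one. For part~(i), I would reuse the morphism $\phi_{q,i}: \GL_i\times \bbC\times \bbC^{i-1}\to \scrD_{q,i}$ from \eqref{eqn:phii}. Note that for any $a\in \bbC^*$ the scalars $a,aq^{-1},\ldots,aq^{-(i-1)}$ are automatically distinct, so the description of $\calD_{q,i}$ and of generic fibers goes through unchanged: the source has dimension $i^2+i$, and a generic fiber $\phi_{q,i}^{-1}(A,B)$ is cut out by $h=\diag(h_1,u_1h_1,\ldots,u_1\cdots u_{i-1}h_1)$ with $h_1,u_1,\ldots,u_{i-1}\in \bbC^*$, hence has dimension $i$. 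This gives $\dim \scrD_{q,i}=i^2$. For $\scrN_{q,i}$ I would invoke Lemma~\ref{lem:switch}, noting that the isomorphism $\theta$ identifies $\scrN_{q,i}$ with $\scrD_{q^{-1},i}$ and that $q^{-1}$ also has infinite order, so the same count yields $\dim \scrN_{q,i}=i^2$.

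For part~(ii), I first observe that Lemma~\ref{lem:extension0} extends without change to all $\scrV,\scrW\in\{\scrD_{q,i},\scrN_{q,j}\mid i,j\geq 1\}$ when $\ell=\infty$: the four cases there only require choosing generic representatives whose nonzero eigenvalues lie in distinct $q$-orbits, and such choices exist in $\bbC^*$ regardless of the order of $q$; the ``no wrap-around'' splitting argument via Proposition~\ref{QCmatrix} is in fact easier, because there is never an obstruction coming from $aq^{-(\ell-1)}=qa$. Combined with the irreducibility of each $\scrD_{q,i}$ and $\scrN_{q,j}$ (Proposition~\ref{prop:irreducibility of <l}, whose proof again carries over verbatim) and Lemma~\ref{lem:C-BS}, this makes every $\scrK_{q,(\bfm,\bfr)}$ with $(\bfm,\bfr)\in \ML_{q,n}$ an irreducible component, as already recorded in Proposition~\ref{prop: irreducible l infty}.

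The dimension count then mirrors the proof of Theorem~\ref{prop: dimension of Cl}, but is simpler. Label the direct summands in $\scrD_{q,1}^{\oplus m_1}\oplus\scrD_{q,2}^{\oplus m_2}\oplus\cdots\oplus \scrN_{q,1}^{\oplus r_1}\oplus\scrN_{q,2}^{\oplus r_2}\oplus\cdots$ by an index $k$, with corresponding block size $n_k\in\{i,j\}$ and individual dimension $n_k^2$ by part~(i). Since $\hom_{\bbA_q^2}$ vanishes between any two of these factors, Lemma~\ref{lem:C-BS} gives
\begin{align*}
\dim \scrK_{q,(\bfm,\bfr)} \;=\; \sum_k n_k^2 \;+\; \sum_{k\neq k'} n_k n_{k'} \;=\; \Bigl(\sum_k n_k\Bigr)^2 \;=\; n^2,
\end{align*}
independently of $(\bfm,\bfr)$; equivalently, setting $m_\ell=0$ in Theorem~\ref{prop: dimension of Cl} recovers the same formula. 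Combined with Proposition~\ref{prop: irreducible l infty}, this proves pure dimension $n^2$. The only mild subtlety to verify is the extension of Lemma~\ref{lem:extension0} to all $i,j\geq 1$ in the absence of an upper bound $\ell$, but as noted this is immediate from the fact that generic non-$q$-equivalent eigenvalues are always available.
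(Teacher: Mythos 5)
Your proposal is correct and follows essentially the same route as the paper, which simply observes that the proofs of Lemma~\ref{prop: dimension of generators} (the fibration via $\phi_{q,i}$ together with the switch $\theta$ identifying $\scrN_{q,i}$ with $\scrD_{q^{-1},i}$) and Theorem~\ref{prop: dimension of Cl} (the Crawley-Boevey--Schr\"oer formula with vanishing $\hom$ and $\ext^1$) carry over verbatim when $\ell=\infty$, the only difference being the absence of the exceptional component $\scrD_{q,\ell}$ and hence of the extra summand $m_\ell$. Your extra care in checking that Lemma~\ref{lem:extension0} extends to all $i,j\geq 1$ is exactly the point implicitly used by the paper, and your verification of it is sound.
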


\begin{cor}
\label{cor: q=1}
The following are equivalent:
\begin{itemize}
\item[(a)] $\scrK_{q,n}$ is irreducible for any $n\geq1$;
\item[(b)] $\scrK_{q,n}$ is irreducible for some $n\geq1$;
\item[(c)] $q=1$.
\end{itemize}
\end{cor}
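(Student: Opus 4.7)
The plan is to establish the cyclic implications $(a)\Rightarrow(b)\Rightarrow(c)\Rightarrow(a)$, with the irreducible component counts from Corollary \ref{cor: no of irreducible} (root of unity case) and Corollary \ref{cor: no of l infty} (non-root of unity case) doing the real work.

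The implication $(a)\Rightarrow(b)$ is immediate.

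For $(c)\Rightarrow(a)$, suppose $q=1$, so $\ell=1$. Then $p_0(0)=1$ and $p_0(t)=0$ for all $t\geq 1$, while $p_1(j)=1$ for all $j\geq 0$. By Corollary \ref{cor: no of irreducible}, the number of irreducible components of $\scrK_{q,n}$ is
\[
\sum_{i+j=n}p_0(i)p_1(j) \;=\; p_0(0)p_1(n) \;=\; 1,
\]
which also agrees with the remark after \eqref{ML} that $\ML_{1,n}=\{((n),\vec{0})\}$. Hence $\scrK_{1,n}$ is irreducible for every $n\geq 1$.

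For the only nontrivial direction $(b)\Rightarrow(c)$, I argue by contrapositive: if $q\neq 1$, then $\scrK_{q,n}$ is reducible for every $n\geq 1$. Split into the two cases already handled by the paper. If $q$ is a primitive $\ell$-th root of unity with $\ell\geq 2$, then for every $i\geq 0$ the partition $(1,1,\ldots,1)$ has largest part $\leq \ell-1$, so $p_{\ell-1}(i)\geq 1$; similarly $p_\ell(j)\geq 1$ for all $j\geq 0$. Thus Corollary \ref{cor: no of irreducible} gives
\[
\#\{\text{components of }\scrK_{q,n}\} \;=\; \sum_{i+j=n}p_{\ell-1}(i)p_\ell(j) \;\geq\; n+1 \;\geq\; 2.
\]
If $q$ is not a root of unity, Corollary \ref{cor: no of l infty} together with $p(i)\geq 1$ for all $i\geq 0$ yields
\[
\#\{\text{components of }\scrK_{q,n}\} \;=\; \sum_{i+j=n}p(i)p(j) \;\geq\; p(0)p(n)+p(n)p(0) \;=\; 2p(n) \;\geq\; 2.
\]
In either case $\scrK_{q,n}$ has at least two irreducible components, hence is reducible, completing the argument.

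There is no real obstacle here: once the counting corollaries are in hand, the only step requiring thought is the trivial lower bound $p_{\ell-1}(i)\geq 1$ for $\ell\geq 2$, which distinguishes the case $\ell=1$ (where this bound fails for $i\geq 1$) from all others and thereby pinpoints $q=1$ as the unique value giving irreducibility.
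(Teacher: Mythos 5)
Your proposal is correct and takes essentially the same route as the paper, which simply invokes Corollaries \ref{cor: no of irreducible} and \ref{cor: no of l infty} to conclude; you have just spelled out the elementary counting inequalities (using $p_{\ell-1}(i)\geq 1$ for $\ell\geq 2$ and $p(i)\geq 1$) that the paper leaves to the reader.
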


\begin{proof}
It follows from Corollary \ref{cor: no of l infty} and  Corollary \ref{cor: no of irreducible}.
\end{proof}

\subsection{}
In this subsection, we consider the GIT quotient $\scrK_{q,n}//G$. Inspired by \eqref{TPL}, we define
\begin{equation*}
\TPL_{q,n} :=\{(0,m,r) \in \bbZ_{\geq0}^3\mid m+r=n \}.
\end{equation*}

For any $(0,m,r)\in\TPL_{q,n}$, define
\begin{align*}
\scrZ_{0,m,r}&:= \ov{\scrD_{q,1}^{\oplus m}\oplus \scrN_{q,1}^{\oplus r}}.
\end{align*}
Similar to the case $\ell<\infty$, we have the following lemma.

\begin{lem}
\label{lem: l=0 irr}
For each $(0,m,r)\in\TPL_{q,n}$, $\scrZ_{0,m,r}//{G}$ is an irreducible component of $\scrK_{q,n}//{G}$ and  $\scrK_{q,n}//{G}= \bigcup_{(0,m,r)\in\TPL_{q,n}} \scrZ_{0,m,r}//{G}$.
\end{lem}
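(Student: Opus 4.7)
The plan is to mirror the argument of Proposition \ref{prop: irr 2}, simplified to the case $\ell=\infty$ where the only building blocks are $\scrD_{q,1}$ and $\scrN_{q,1}$.

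For the first part, note that by Proposition \ref{prop: irreducible l infty}, each $\scrZ_{0,m,r}=\scrK_{q,((m,0,\ldots),(r,0,\ldots))}$ is an irreducible component of $\scrK_{q,n}$, hence an irreducible variety. The quotient morphism restricts to a surjection $\scrZ_{0,m,r}\twoheadrightarrow \scrZ_{0,m,r}//G$, so the image is irreducible; moreover, it is closed in $\scrK_{q,n}//G$ since $\scrZ_{0,m,r}$ is $G$-stable and closed in $\scrK_{q,n}$.

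For the covering $\scrK_{q,n}//G = \bigcup_{(0,m,r)\in\TPL_{q,n}} \scrZ_{0,m,r}//G$, I would invoke Theorem \ref{fundamental theorem of GIT}(3): points of $\scrK_{q,n}//G$ correspond bijectively to closed $G$-orbits in $\scrK_{q,n}$. So it suffices to show every closed orbit meets some $\scrZ_{0,m,r}$. I would run the case analysis of Lemma \ref{lem: closed 1}, noting the following simplifications when $\ell=\infty$: first, the proof of Lemma \ref{lem:invereigen} still gives, for $A$ invertible and $b\neq 0$ an eigenvalue of $B$, the infinite chain $b,bq^{-1},bq^{-2},\ldots$ of distinct eigenvalues of $B$, contradicting finite-dimensionality; so whenever $A$ is invertible, $B$ must be nilpotent. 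Second, Case (i) of Lemma \ref{lem: closed 1} (the permutation-and-scaling trick with $g=\mathrm{diag}(tI_k,I_{n-k})$) shows that a nonzero nilpotent summand of $A$ (resp.\ $B$, via $\theta$) breaks closedness. Combining with the generalized eigenspace decomposition (splitting $A=A_1\oplus A_2$ into its invertible and nilpotent parts, with $B$ decomposing accordingly by Lemma \ref{Jordan q-layer}), a closed orbit forces both $A$ and $B$ to be diagonalizable with eigenvalues that can be paired off into direct summands of type $(a,0)\in\scrD_{q,1}$ or $(0,b)\in\scrN_{q,1}$. Thus $(A,B)\in \scrD_{q,1}^{\oplus m}\oplus\scrN_{q,1}^{\oplus r}\subseteq \scrZ_{0,m,r}$ for appropriate $m,r$ with $m+r=n$.

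For the final step, that each $\scrZ_{0,m,r}//G$ is an irreducible component (not contained in any other $\scrZ_{0,m',r'}//G$), I would observe that the function $(A,B)\mapsto \rank A$ is $G$-invariant, hence descends to $\scrK_{q,n}//G$. A dense open subset of $\scrZ_{0,m,r}$ consists of pairs $(A,B)\in \scrD_{q,1}^{\oplus m}\oplus\scrN_{q,1}^{\oplus r}$ with $\rank A=m$ and $\rank B=r$ and generic eigenvalues; these points have closed orbit (semisimple modules) and map to a dense open subset of $\scrZ_{0,m,r}//G$ on which $\rank A\equiv m$. If $\scrZ_{0,m,r}//G\subseteq \scrZ_{0,m',r'}//G$, constancy of $\rank A$ on the generic locus of the smaller component would force $m=m'$, and then $r=r'$ follows from $m+r=m'+r'=n$.

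The main obstacle will be the closed-orbit analysis in the covering step: one needs to rule out, in the mixed case where $A$ and $B$ each have both a nilpotent and an invertible part, that any ``mixed'' indecomposable module appears in a closed orbit. This is handled by the combination of the non-closedness argument from Case (i) of Lemma \ref{lem: closed 1} and the fact that when $q$ is not a root of unity, every finite-dimensional simple $\bbA_q^2$-module is one-dimensional (proved by iterating $A(B^iv)=q^{-i}\lambda B^iv$ for an eigenvector $v$ of $A$, producing infinitely many distinct eigenvalues unless the chain terminates, which yields a proper submodule and forces simplicity only in dimension one). Thus semisimple objects are exactly the direct sums parametrised by $\TPL_{q,n}$.
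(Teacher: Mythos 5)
Your covering argument is sound and takes a cleaner route than the paper's implicit one (which runs the matrix case analysis of Lemma~\ref{lem: closed 1}): you invoke Theorem~\ref{fundamental theorem of GIT}(3), then observe that when $q$ is not a root of unity every finite-dimensional simple $\bbA_q^2$-module is one-dimensional (the chain $A(B^i v)=q^{\pm i}\lambda B^i v$ either terminates, giving a one-dimensional submodule, or contradicts finite-dimensionality), so semisimple modules are exactly the direct sums of $(a,0)$'s and $(0,b)$'s, landing in $\scrD_{q,1}^{\oplus m}\oplus\scrN_{q,1}^{\oplus r}\subseteq\scrZ_{0,m,r}$. This representation-theoretic shortcut is a genuine simplification that only works because $\ell=\infty$, and it is correct.

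The non-containment step, however, has a gap. The function $\rank A$ is $G$-invariant but not regular, so it does not ``descend'' to the GIT quotient in any useful sheaf-theoretic sense; at best it is a well-defined but only lower-semicontinuous function after choosing the closed orbit in each fiber. More to the point, the inference ``constancy of $\rank A$ on the generic locus of the smaller component forces $m=m'$'' does not follow: the generic locus of $\scrZ_{0,m,r}//G$ (where $\rank A = m$) need not meet the generic locus of $\scrZ_{0,m',r'}//G$ (where $\rank A = m'$), and nothing in the sentence links $m$ to $m'$. To repair it, use the bound valid on \emph{all} of $\scrZ_{0,m',r'}$ (Lemma~\ref{lem:not include0} gives $\rank A\le m'$ and $\rank B\le r'$ for every point there). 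Combined with the observation that if $\scrZ_{0,m,r}//G\subseteq\scrZ_{0,m',r'}//G$ then the unique closed orbit over any generic point of $\scrZ_{0,m,r}//G$ must lie inside $\scrZ_{0,m',r'}$, this yields $m\le m'$ and $r\le r'$, whence equality from $m+r=m'+r'=n$. Alternatively (and this is what the paper does in Proposition~\ref{prop: irr 2}): from the same observation that the closed orbits over $\scrZ_{0,m,r}//G$ lie inside $\scrZ_{0,m',r'}$, conclude $\calU_{0,m,r}\subseteq\scrZ_{0,m',r'}$, hence $\scrZ_{0,m,r}=\ov{\calU_{0,m,r}}\subseteq\scrZ_{0,m',r'}$, contradicting that these are distinct irreducible components of $\scrK_{q,n}$ by Proposition~\ref{prop: irreducible l infty}. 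Either fix closes the gap; without it, the final step does not stand as written.
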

\begin{lem}
 \label{lem:pure dimension}
The variety $\scrK_{q,n}//{G}$ is of pure dimension $n$; that is, every irreducible component  $\scrZ_{0,m,r}//{G}$ has the same dimension $n$, for any $(0,m,r) \in \TPL_{q,n}$.
\end{lem}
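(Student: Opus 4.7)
The plan is to mirror the argument of Proposition \ref{prop:pure dimension}, which simplifies considerably because for $\ell=\infty$ every triple in $\TPL_{q,n}$ has $p=0$. Fix $(0,m,r)\in \TPL_{q,n}$, so $m+r=n$, and introduce the $\ell=\infty$ analogue of \eqref{def:Upmr}: let $\calU_{0,m,r}\subseteq U_{q,1}^{\oplus m}\oplus V_{q,1}^{\oplus r}$ be the subset of those $(A,B)$ for which the nonzero eigenvalues of $A$ are pairwise distinct and the nonzero eigenvalues of $B$ are pairwise distinct. Then $\calU_{0,m,r}$ is a $G$-invariant dense open subset of $\scrZ_{0,m,r}$, and after conjugation every such pair is of the form $A=\diag(a_1,\dots,a_m,0,\dots,0)$, $B=\diag(0,\dots,0,b_1,\dots,b_r)$ with $a_i,b_j\in\bbC^*$ distinct within each row; in particular $AB=BA=0$.

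First I would verify that each $G$-orbit in $\calU_{0,m,r}$ is closed. Following Lemma \ref{lem:U polystable}, the associated $\bbA_q^2$-module decomposes as a direct sum of $m+r$ pairwise non-isomorphic simple modules---the $m$ copies of type $\scrD_{q,1}$ being distinguished by the values $a_i$, the $r$ copies of type $\scrN_{q,1}$ by the $b_j$, and the two types never isomorphic---hence the module is semisimple, so its orbit is closed by \cite[\S 12.6]{Ar69}.

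Next I would compute the stabilizer at the standard representative. Commutation with $A$ forces any $g\in G_{(A,B)}$ to have block-diagonal form $\diag(D,X)$ with $D$ a diagonal invertible $m\times m$ matrix (since the $a_i$ are distinct and nonzero) and $X\in\GL_r$ arbitrary; imposing further $gBg^{-1}=B$ then forces $X$ to commute with $\diag(b_1,\dots,b_r)$, hence to be diagonal as well, because the $b_j$ are distinct nonzero. Thus $G_{(A,B)}$ is the diagonal torus of $\GL_n$, of dimension $n$, whence $\dim G\cdot(A,B)=n^2-n$.

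Finally, by the preceding lemma $\dim \scrZ_{0,m,r}=n^2=\dim \calU_{0,m,r}$. By Theorem \ref{fundamental theorem of GIT}(1), the fibers of the restricted map $\pi_{0,m,r}\colon \calU_{0,m,r}\to\widetilde{\calU}_{0,m,r}:=\pi_{0,m,r}(\calU_{0,m,r})$ are precisely the closed $G$-orbits, all of dimension $n^2-n$. Therefore $\dim(\scrZ_{0,m,r}//G)=\dim \widetilde{\calU}_{0,m,r}=n^2-(n^2-n)=n$, since $\widetilde{\calU}_{0,m,r}$ is a dense open subset of $\scrZ_{0,m,r}//G$. Combined with Lemma \ref{lem: l=0 irr}, this yields the asserted pure dimensionality. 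No real obstacle arises; the one small subtlety, the stabilizer calculation, is handled by the distinct-eigenvalue genericity built into the definition of $\calU_{0,m,r}$.
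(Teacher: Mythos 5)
Your proof is correct and follows essentially the same route the paper intends; the paper gives no written proof of this lemma, instead referring to the $\ell<\infty$ argument of Proposition \ref{prop:pure dimension}, and your argument is exactly that proof specialized to $p=0$: dense open subset with closed orbits, fiber dimension from the stabilizer, dimension of $\scrZ_{0,m,r}$ from the preceding lemma, quotient dimension by subtraction. Two small remarks. First, you replace the paper's genericity condition in $\calU_{p,m,r}$ (``nonzero eigenvalues not $q$-equivalent'') by the weaker condition ``nonzero eigenvalues pairwise distinct''; for $\ell=\infty$ these are genuinely different, but the weaker condition already suffices for both semisimplicity (which in fact needs no distinctness at all, since $A,B$ are simultaneously diagonalizable) and for the stabilizer being exactly the diagonal torus, so this is a harmless simplification. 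Second, you compute the stabilizer directly rather than citing \cite[Proposition~3.4]{CW18} as the paper does; both routes give $\dim G_{(A,B)} = n$, consistent with the paper's general formula $n-(\ell-1)p$ at $p=0$.
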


Combining with  Proposition \ref{prop: irr 2}, Proposition \ref{prop:pure dimension}, Lemma \ref{lem: l=0 irr} and Lemma \ref{lem:pure dimension}, we have obtained Theorem \ref{main thm 2}.

\end{document}